\newcommand{\Alb}{{\mathrm{Alb}}}
\newcommand{\Aut}{{\mathrm{Aut}}}
\newcommand{\Gal}{{\mathrm{Gal}}}
\newcommand{\Pic}{{\mathrm{Pic}}}
\newcommand{\vol}{{\mathrm{Vol}}}
\newcommand{\rank}{{\mathrm{rank}}}
\newcommand{\albdim}{{\mathrm{albdim}}}
\newcommand{\Spec}{{\mathrm{Spec}}}
\newcommand{\nti}{{\mathrm{int}}}
\newcommand{\CO}{{\mathcal{O}}}
\newcommand{\CE}{{\mathcal{E}}}
\newcommand{\CP}{{\mathcal{P}}}
\newcommand{\CQ}{{\mathcal{Q}}}
\newcommand{\PP}{{\mathbb{P}}}
\newcommand{\QQ}{{\mathbb{Q}}}
\newcommand{\ZZ}{{\mathbb{Z}}}
\newcommand{\rounddown}[1]{{\lfloor #1 \rfloor}}
\begin{document}


\theoremstyle{plain}
\newtheorem{theorem}{Theorem}[section]
\newtheorem{lemma}[theorem]{Lemma}
\newtheorem{coro}[theorem]{Corollary}
\newtheorem{prop}[theorem]{Proposition}
\newtheorem{defi}[theorem]{Definition}
\newtheorem{ques}[theorem]{Question}
\newtheorem{conj}[theorem]{Conjecture}

\newtheorem*{ques*}{Question}
\newtheorem*{ques'*}{Question'}
\newtheorem*{conj*}{Conjecture}

\theoremstyle{remark}
\newtheorem{remark}[theorem]{\bf Remark}
\newtheorem{assumption}[theorem]{\bf Assumption}
\newtheorem{example}[theorem]{\bf Example}

\numberwithin{equation}{section}

\setcounter{tocdepth}{1} 


\title[Relative Severi inequality for mAd fibrations]{Relative Severi inequality for fibrations of maximal Albanese dimension over curves}

\author{Yong Hu}
\author{Tong Zhang}
\date{\today}

\address[Yong Hu]{School of Mathematical Sciences, Shanghai Jiao Tong University, 800 Dongchuan Road, Shanghai 200240, People's Republic of China}
\email{yonghu@sjtu.edu.cn}

\address[Tong Zhang]{School of Mathematical Sciences, Shanghai Key Laboratory of PMMP, East China Normal University, 500 Dongchuan Road, Shanghai 200241, People's Republic of China}
\email{tzhang@math.ecnu.edu.cn, mathtzhang@gmail.com}

\keywords{Irregular variety, Severi inequality, Albanese map}

\begin{abstract}
	Let $f: X \to B$ be a relatively minimal fibration of maximal Albanese dimension from a variety $X$ of dimension $n \ge 2$ to a curve $B$ defined over an algebraically closed field of characteristic zero. We prove that $K_{X/B}^n \ge 2n! \chi_f$. It verifies a conjectural formulation of Barja in \cite{Barja_Severi}. Via the strategy outlined in \cite{Barja_Stoppino}, it also leads to a new proof of the Severi inequality for varieties of maximal Albanese dimension. Moreover, when the equality holds and $\chi_f > 0$, we prove that the general fiber $F$ of $f$ has to satisfy the Severi equality that $K_F^{n-1} = 2(n-1)! \chi(F, \omega_F)$. We also prove some sharper results of the same type under extra assumptions.
\end{abstract}

\maketitle

\tableofcontents

\section{Introduction}

The Severi inequality states that
$$
K_X^n \ge 2n! \chi(X, \omega_X)
$$
for an $n$-dimensional minimal variety $X$ of general type and of maximal Albanese dimension. It was originally stated for surfaces by Severi \cite{Severi} and was proved by Pardini \cite{Pardini}. Later, it was generalized to arbitrary dimension by Barja \cite{Barja_Severi} as well as the second author \cite{Zhang_Severi}. From now on, we refer this inequality as the \emph{absolute} Severi inequality in order to distinguish from the result in the current paper.

The goal of this paper is to establish a \emph{relative} version of the absolute Severi inequality. More precisely, we prove that
$$
K_{X/B}^n \ge 2n! \chi_f
$$
for a relatively minimal fibration $f: X \to B$ of maximal Albanese dimension from an $n$-dimensional variety $X$ to a curve $B$. This inequality was conjecturally formulated by Barja in \cite[\S 1]{Barja_Severi}. The $g(B) = 0$ case of this relative inequality can be applied to give a new proof of the above absolute Severi inequality. Moreover, the above relative inequality is sharp, and if $K_{X/B}^n = 2n! \chi_f > 0$, we prove that the general fiber $F$ of $f$ has to satisfy the absolute Severi equality that
$$
K_F^{n-1} = 2(n-1)! \chi(F, \omega_F).
$$

We also use our method to deduce some shaper relative results of the same type under extra assumptions. As an upshot, the corresponding $g(B) = 0$ case implies the recent geographical results of absolute Severi type obtained by Barja, Pardini and Stoppino \cite{Barja_Pardini_Stoppino}.

Throughout this paper, we work over an arbitrary algebraically closed field $k$ of characteristic zero. All varieties are assumed to be projective.

\subsection{Albanese dimension of fibrations and $\chi_f$} \label{subsection: Albanese dimension}
We start from some notation. In the study of irregular varieties, a major tool is to consider the Albanese map. For an irregular variety $X$, the so-called Albanese dimension $\albdim(X)$ of $X$ is one of the most important invariants of $X$. In the following, we consider its relative version.

Let $f: X \to Y$ be a fibration between two normal varieties $X$ and $Y$ with a general fiber $F$. Let $a: X \to \Alb(X)$ be the Albanese map of $X$. 

\begin{defi} \label{def: Albanese dimension}
	The \emph{Albanese dimension of $f$}, denoted by $\albdim(f)$, is defined to be $\dim a(F)$, namely the dimension of the image of $F$ under the Albanese map of $X$. We say that $f$ is \emph{of maximal Albanese dimension}, if $\albdim(f) = \dim F$.
\end{defi}

It is easy to check that the following properties hold:

\begin{itemize}
	\item [(1)] When $f$ is the structural morphism, i.e., $Y = \Spec (k)$, then 
	$$
	\albdim(f) = \albdim(X).
	$$
	Thus the Albanese dimension for fibrations is indeed a generalization of that for varieties.
	\item [(2)] In general, we have
	$$
	\albdim(f) \le \albdim(X) - \albdim(Y).
	$$
	In particular, if $f$ is the Stein factorization of the Albanese map of $X$, then $\albdim(f) = 0$.
	\item [(3)] If both $Y$ and $f$ are of maximal Albanese dimension, so is $X$.
\end{itemize}

Another important invariant associated to $f$ is the relative Euler characteristic
$$
\chi_f := \chi(X, \omega_X) - \chi(Y, \omega_Y) \chi(F, \omega_F).
$$
Regarding this invariant, the first interesting case is when $f: X \to Y$ is a surface fibration, i.e., $X$ is a smooth surface and $Y$ is a curve. In this case, it is well-known that
$$
\chi_f = \deg f_* \omega_{X/Y}.
$$ 
In particular, by \cite[Main Theorem]{Fujita}, we know that $\chi_f \ge 0$. There are a number of important results related to $\chi_f$, such as the Arakelov inequality \cite{Arakelov} (see \cite{Viehweg} for a survey together with generalizations), the slope inequality of Cornalba-Harris \cite{Cornalba_Harris} and Xiao \cite{Xiao}, the geography of irregular surfaces (see \cite{Lopes_Pardini} for a detailed survey). The study of these results as well as their refinements and generalizations has always been active throughout the past decades.

Another interesting case, which is more related to this paper, is when $f$ is a fibration of maximal Albanese dimension and $Y$ is a curve. In this case, by the work of Hacon and Pardini \cite[Theorem 2.4]{Hacon_Pardini} (see Proposition \ref{prop: chi_f degree} for a slightly generalized version adapting to the setting of this paper), we know that 
$$
\chi_f = \deg f_* (\omega_{X/Y} \otimes \CP),
$$
where $\CP$ is a general torsion element in $\Pic^0(X)$. Moreover, they showed loc. cit. that $\chi_f \ge 0$ still holds in this case.

\subsection{Main results}
Now we state the first main theorem of this paper.

\begin{theorem} [Relative Severi inequality] \label{main1}
	Let $f: X \to B$ be a relatively minimal fibration from a variety $X$ of dimension $n \ge 2$ to a smooth curve $B$. Suppose that $f$ is of maximal Albanese dimension. Then we have the following sharp inequality
	\begin{equation} \label{main inequality}
		K_{X/B}^n \ge 2n! \chi_f.
	\end{equation}
\end{theorem}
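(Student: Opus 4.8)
The plan is to convert \eqref{main inequality} into a \emph{slope-type inequality} for the fibration $f$ and to prove it by a Xiao-style Harder--Narasimhan argument in which the fiberwise input is a sharp, defect-free Clifford--Severi inequality for the general fiber. First I would let $a\colon X \to \Alb(X)$ be the Albanese map and, for a general torsion $\CP \in \Pic^0(X)$, consider the sheaf $E := f_*(\omega_{X/B}\otimes\CP)$ on the curve $B$. By the cited Hacon--Pardini result (Proposition~\ref{prop: chi_f degree}) one has $\deg E = \chi_f$; by Fujita-type semipositivity $E$ is nef, so all its Harder--Narasimhan slopes are nonnegative; and by generic vanishing on the general fiber $F$ (which is of maximal Albanese dimension) its rank equals $\chi(F,\omega_F)$. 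Thus proving \eqref{main inequality} is equivalent to establishing the bound $K_{X/B}^n \ge 2n!\deg E$, and the problem is reduced to controlling $\omega_{X/B}^n$ from below in terms of the positivity of $E$.

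The crucial point, and the reason one may hope for the sharp constant $2n!$, is to run the slope method using \emph{continuous ranks} rather than ordinary $h^0$'s. Restricting to a general fiber $F$, the classical slope method would invoke Clifford's theorem, which carries a defect (an additive constant) and produces a strictly weaker constant---for instance Xiao's inequality degenerates to nothing when $F$ is an elliptic curve. I would instead replace $h^0(F,\,\cdot\,)$ by the continuous rank $h^0_{a_F}(F,\,\cdot\,) := \min_{Q\in\Pic^0}h^0(F,\,\cdot\,\otimes Q)$, for which the Clifford--Severi inequality on $F$ holds \emph{without} defect and with the sharp factor $2$, namely $\vol(L)\ge 2(n-1)!\,h^0_{a_F}(L)$ for the relevant subcanonical nef $L$ on $F$. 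The device that makes the continuous rank a genuinely additive (indeed real-valued) invariant, and that underlies this defect removal, is Pardini's multiplication-by-$d$ covering trick on $\Alb(X)$: pulling back along the isogenies $\mu_d$ and letting $d\to\infty$ smooths out the integer defects.

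With these tools in place, the core step is the Harder--Narasimhan computation. I would take the filtration $0=E_0\subsetneq E_1\subsetneq\cdots\subsetneq E_\ell=E$ with slopes $\mu_1>\cdots>\mu_\ell\ge 0$ and ranks $r_i=\rank E_i$. Each $E_i$ generates a subsystem of $\omega_{X/B}\otimes\CP$ whose moving part defines a nef divisor $N_i$ on $X$ whose restriction $N_i|_F$ is subcanonical; the inductive hypothesis (the statement in one lower dimension, with base case the sharp continuous Clifford inequality $\deg\ge 2\,h^0_a$ on curves) then gives $N_i^{\,n-1}\cdot F\ge 2(n-1)!\,r_i$. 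Weighting these fiberwise bounds by the slope gaps $\mu_i-\mu_{i+1}$ and assembling them through the higher-dimensional slope inequality for $\omega_{X/B}$, which contributes an additional factor $n$ from the $n$-fold self-intersection, together with the identity $\deg E=\sum_i r_i(\mu_i-\mu_{i+1})$, yields $K_{X/B}^n\ge 2\,n!\deg E=2\,n!\,\chi_f$.

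I expect the main obstacle to be attaining the constant \emph{sharply}, i.e.\ simultaneously removing the Clifford defect on fibers and making the slope accounting tight, since any loss in either place destroys the factor $2n!$ (classical slope inequalities are provably too weak here). This is precisely where the continuous-rank formalism and the covering trick must be pushed carefully: I would need the factor-$2$ Clifford--Severi on $F$ to persist for all the moving parts $N_i|_F$ (requiring that they stay within the subcanonical range), and the Harder--Narasimhan weighting to lose nothing in the limit $d\to\infty$. Two further technical points demand care: the nefness of $\omega_{X/B}$, which I would extract from the relative minimality of $f$, and the degenerate case $g(B)=0$, where $a(X)$ has dimension $n-1$ rather than $n$ so that the Albanese map alone does not see the fibration; there I would replace $a$ by the combined map $(a,f)\colon X\to\Alb(X)\times B$ with a product polarization, and check that the entire continuous-rank and slope apparatus survives.
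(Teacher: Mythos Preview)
Your high-level architecture---Xiao's Harder--Narasimhan method combined with Pardini's multiplication-by-$m$ covering trick---is exactly the one the paper uses, but the way you propose to insert the sharp fiberwise bound contains a genuine gap. In the HN filtration of $E=f_*(\omega_{X/B}\otimes\CP)$ the integer $r_i=\rank E_i$ is the dimension of a subspace of $H^0(F,\omega_F\otimes\CP|_F)$, hence $r_i\le h^0(F,N_i|_F)$; it is \emph{not} the continuous rank $h^0_{a_F}(F,N_i|_F)$. Since $h^0_{a_F}(F,N_i|_F)\le h^0(F,N_i|_F)$ may well be strictly smaller than $r_i$, the defect-free continuous Clifford--Severi inequality $\vol(N_i|_F)\ge 2(n-1)!\,h^0_{a_F}(F,N_i|_F)$ does \emph{not} yield $(N_i|_F)^{n-1}\ge 2(n-1)!\,r_i$. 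The HN filtration is taken for one fixed $\CP$ and the divisors $N_i$ depend on that choice, so you cannot simply vary $\CP$ afterwards to convert $r_i$ into a continuous rank; the filtration itself jumps with $\CP$. In short, you are feeding an ordinary $h^0$ into an inequality that is only sharp for continuous $h^0$.

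The paper resolves this by never attempting a defect-free fiberwise bound. It proves instead (Theorem~\ref{thm: relative noether}) an ordinary Clifford-type inequality $h^0(F,L)\le\frac{1}{2(n-1)!}\vol(L)+(n-1)\varepsilon(F,L)$ with an explicit error $\varepsilon$, inserts that into Xiao's method to obtain a slope inequality of the form $(1+\mathrm{err})K_{X/B}^n\ge 2n!\deg f_*\omega_{X/B}$ (Proposition~\ref{prop: slope}), and \emph{then} applies the whole slope inequality to the covered fibration $f_m:X_m\to B_m$; the point is that $\varepsilon(F_m,\cdot)$ grows like $m^{2q-2g-2}$ while $(L_m|_{F_m})^{n-1}$ grows like $m^{2q-2g}$, so the error is $O(m^{-2})$. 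A separate limit computation (Proposition~\ref{prop: degree limit}) identifies $\lim_m m^{-2q}\deg {f_m}_*\omega_{X_m/B_m}$ with $\chi_f$. Thus the covering trick is applied \emph{after} the Xiao step, to an explicit inequality whose error term is tracked, rather than before it to manufacture continuous ranks. For $g(B)=0$ the paper does not use the product map $(a,f)$ you suggest (where the isogeny trick on $\Alb(X)$ alone would leave the base $\PP^1$ fixed and create connectedness issues for the fibers of $X_m\to B$); it simply base-changes along a double cover $C\to B$ branched at four general points so that $g(C)=1$, checks $K_{Y/C}^n=2K_{X/B}^n$ and $\chi_{f'}=2\chi_f$, and reduces to the case already handled.
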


We call the inequality \eqref{main inequality} a \emph{relative Severi inequality} because it literally replaces the absolute invariants $K_X^n$ and $\chi(X, \omega_X)$ in the absolute Severi inequality by the relative invariants $K_{X/B}^n$ and $\chi_f$. 

Let us put Theorem \ref{main1} into perspective. When $n=2$, it has already been known by Xiao \cite[Corollary 1]{Xiao}. More precisely, Xiao proved that for a relatively minimal surface fibration $f: X \to B$ with a general fiber of genus $g \ge 2$, the inequality \eqref{main inequality} holds provided that $h^1(X, \CO_X) > g(B)$. Note that this assumption is equivalent to that $f$ is of maximal Albanese dimension, as the fiber in this case is just a curve. 

For general $n > 2$, the problem about finding such kind of inequalities has already been addressed by Mendes Lopes and Pardini \cite[\S 5.3]{Lopes_Pardini}, whose purpose was to generalize, using Pardini's original approach in \cite{Pardini}, the Severi inequality for surfaces to higher dimensions. To our knowledge, the precise version of \eqref{main inequality} was first formulated conjecturally by Barja in \cite[\S 1, Page 545]{Barja_Severi}. Barja also observed loc. cit. that \eqref{main inequality} is in fact a consequence of the $f$-positivity conjecture \cite[Conjecture 1]{Barja_Stoppino} due to himself and Stoppino.\footnote{This conjecture was recently studied by the authors in \cite{Hu_Zhang}, where it is shown that counterexamples to this conjecture do exist for any $n>2$.} Another interesting observation, which probably motivates the formulation \eqref{main inequality}, is that when $X$ itself is of maximal Albanese dimension, one can indeed deduce the absolute Severi inequality just combining Pardini's approach and \eqref{main inequality} for $g(B) = 0$ (see \cite[Proposition 4.4]{Barja_Stoppino} for details). 

When $g(B) = 1$, it is easy to see that \eqref{main inequality} coincides with the absolute Severi inequality. Besides this and prior to our result, Barja has proved \eqref{main inequality} for $g(B) = 0$ under extra assumptions that $X$ is of maximal Albanese dimension and that $K_X$ is nef. Barja also obtained a weaker version of \eqref{main inequality} when $g(B) \ge 2$. See \cite[Corollary C]{Barja_Severi} as well as its proof for details.

Our Theorem \ref{main1} verifies completely the conjectural formulation of Barja for the base curve $B$ of arbitrary genus. Moreover, if $g(B) = 0$, our assumption that $f$ is of maximal Albanese dimension is strictly weaker than $X$ itself being of maximal Albanese dimension. As is mentioned before, Theorem \ref{main1} for $g(B) = 0$ can be applied to give an alternative proof of the absolute Severi inequality which is different from those in \cite{Barja_Severi} or \cite{Zhang_Severi}.\footnote{Since a detailed strategy has been carried out in \cite[Proposition 4.4]{Barja_Stoppino}, we will not repeat this proof in this paper and just refer the reader to loc. cit. for details.}

Since \eqref{main inequality} is sharp, a new question naturally arises: can one characterize the equality case? In this paper, we also consider this problem. We prove the following result.

\begin{theorem} \label{main2}
	In Theorem \ref{main1}, if the equality in \eqref{main inequality} holds and $\chi_f > 0$, then 
	\begin{itemize}
		\item [(1)] the Albanese map of $X$ maps a general fiber of $f$ onto an abelian variety of dimension $n-1$. In particular, 
		$$
		h^1(X, \CO_X) - g(B) = n-1;
		$$
		\item [(2)] the general fiber $F$ of $f$ satisfies the absolute Severi equality, i.e., 
		$$
		K_F^{n-1}=2(n-1)!\chi(F, \omega_F).
		$$
	\end{itemize}
\end{theorem}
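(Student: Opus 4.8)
The plan is to deduce both statements from the equality case of the Clifford--Severi inequality, which is the engine behind \eqref{main inequality}. Recall that the relative inequality is proved by applying, through the Albanese map $a\colon X\to\Alb(X)$ (equivalently through its pullbacks under the multiplication maps on $\Alb(X)$), a Clifford--Severi inequality of the shape $K_{X/B}^n\ge 2\,n!\cdot(\text{continuous rank})$, where $K_{X/B}$ is nef since $f$ is relatively minimal, and where the continuous rank is matched with $\chi_f=\deg f_*(\omega_{X/B}\otimes\CP)$ via Proposition~\ref{prop: chi_f degree} and generic vanishing. My first step is to record that, under the hypotheses $K_{X/B}^n=2n!\chi_f$ and $\chi_f>0$, the strict positivity of $\chi_f$ forces the continuous rank to be nonzero, so that equality in \eqref{main inequality} is equivalent to equality in the underlying Clifford--Severi inequality rather than being absorbed into a vanishing term.

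For part (1), I would invoke the classification of the equality case of the Clifford--Severi inequality (compare \cite{Barja_Pardini_Stoppino} and the references therein): equality forces the image of the relevant variety under the Albanese map to be an abelian variety. Transversally to $f$, this says that the image $a(F)\subseteq\Alb(X)$ of a general fiber is an abelian variety; since $f$ is of maximal Albanese dimension we have $\dim a(F)=\albdim(f)=n-1$, so $a(F)$ is, up to translation, an $(n-1)$-dimensional abelian subvariety. To reach the numerical conclusion I would use the surjection $\Alb(X)\twoheadrightarrow\Alb(B)$ induced by $f$, whose connected kernel $A_0$ has dimension $h^1(X,\CO_X)-g(B)$ and contains a translate of every fiber image; the general bound gives $n-1=\dim a(F)\le\dim A_0$, while equality forces $a(F)$ to coincide, up to translation, with $A_0$. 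Hence $\dim A_0=n-1$, i.e. $h^1(X,\CO_X)-g(B)=n-1$.

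For part (2), the idea is that the global equality restricts to a general fiber $F$. Because the Clifford--Severi inequality on $X$ is assembled from the Albanese map and its twists, I would show that its restriction along $f$ is precisely the absolute Clifford--Severi inequality for $F$, which is an $(n-1)$-dimensional minimal variety of maximal Albanese dimension, and that the global equality---using $\chi_f>0$ so that $F$ contributes nontrivially---leaves no slack fiberwise. Since for the canonical sheaf the continuous rank equals $\chi(F,\omega_F)$ by generic vanishing, the resulting fiberwise equality reads $K_F^{n-1}=2(n-1)!\,\chi(F,\omega_F)$, which is the asserted absolute Severi equality. In other words, relative extremality of $f$ forces nothing more and nothing less than absolute extremality of its general fiber.

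The step I expect to be the main obstacle is part (1): guaranteeing that equality in \eqref{main inequality} genuinely propagates to the rigid conclusion that $a(F)$ is abelian. This requires a sufficiently sharp and self-contained equality characterization of the Clifford--Severi inequality in the limiting form used to prove Theorem~\ref{main1}, including a verification that the limiting (or covering) argument is already tight at finite level and introduces no extra slack; and it requires controlling the interaction between the Albanese map of $X$ and the induced map to $\Alb(B)$, so that the abelian structure of $a(F)$ yields the dimension count $h^1(X,\CO_X)-g(B)=n-1$. Throughout, the hypothesis $\chi_f>0$ is precisely what prevents the degenerate situation in which the fiberwise constraints become vacuous, and pinning down where it enters will be essential.
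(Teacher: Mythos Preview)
Your plan does not supply the actual mechanisms needed, and both parts have genuine gaps.

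\medskip

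\textbf{Part (1).} You propose to invoke an equality characterization of the Clifford--Severi inequality, but the proof of Theorem~\ref{main1} in this paper is a \emph{limiting} argument: one proves, for each large prime $m$, an inequality of the form
\[
\bigl(1+O(m^{-2})\bigr)K_{X/B}^n \;\ge\; 2n!\,\frac{\deg {f'_m}_*\omega_{Y_m/B_m}}{m^{2q}}
\]
and then lets $m\to\infty$. Equality in the limit tells you nothing about equality at any finite stage, so there is no single Clifford--Severi inequality whose equality case you can quote. You yourself flag this as the main obstacle, but you do not resolve it. The paper's method is entirely different: it proves \emph{strictly sharper} slope inequalities under each of three extra hypotheses on $a|_F$ (birational; not composed with an involution; composed with an involution but with $\kappa(\Sigma')>0$), namely Proposition~\ref{prop: sharper slope}. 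Each hypothesis yields, after the same limiting procedure, a slope constant strictly larger than $2n!$, contradicting $K_{X/B}^n=2n!\chi_f>0$. The only remaining possibility is that $a|_F$ is composed with an involution onto a variety whose smooth model has Kodaira dimension zero, whence $a(F)$ is the abelian variety $A_F$. No equality classification is ever invoked; instead, one rules out every alternative by a stronger inequality.

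\medskip

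\textbf{Part (2).} Saying that the global equality ``restricts to $F$'' is not a proof; you need a mechanism that isolates the fiberwise contribution. The paper does this by a concrete trick you do not mention: after a suitable base change one may assume $K_{X/B}-F$ is nef, and then one reruns the entire limiting proof of Theorem~\ref{main1} with $K_{X/B}$ replaced by $K_{X/B}-F$. This yields
\[
(K_{X/B}-F)^n \;\ge\; 2n!\,\deg f_*\bigl(\omega_{X/B}\otimes\CP\otimes\CO_X(-F)\bigr),
\]
i.e.\ $K_{X/B}^n-nK_F^{n-1}\ge 2n!\bigl(\chi_f-\chi(F,\omega_F)\bigr)$. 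Subtracting the assumed equality $K_{X/B}^n=2n!\chi_f$ gives $K_F^{n-1}\le 2(n-1)!\,\chi(F,\omega_F)$, and the reverse inequality is the absolute Severi inequality for $F$. The key idea---twisting by $\CO_X(-F)$ and exploiting that the same machinery applies---is absent from your outline.
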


Previously, (1) was known only when $n=2$ due to Xiao \cite[Theorem 3]{Xiao}. This paper mainly concerns the higher dimensional case, and our result shows that (1) holds for any $n \ge 2$. The much more interesting and stronger part comes from (2): not like (1) or the absolute Severi inequality, (2) is trivial when $n=2$, i.e., when the fiber is a curve, which says that $\deg K_F = 2\chi(F, \omega_F)$. It actually holds true for any surface fibration, not necessary of maximal Albanese dimension. However, for $n > 2$, (2) was completely unknown before, and it reveals a new connection between the geometry of a family of higher dimensional varieties and the geometry of a general member in this family.

Recall that for a surface fibration $f: X \to B$, the relative irregularity is defined as $q_f := h^1(X, \CO_X) - g(B)$. Recently, Pardini proposed a problem \cite[Problem 2]{Problem} to study various notions of relative irregularity for families of higher dimensional varieties. The result (1) also sheds some light on this problem, suggesting that the number $h^1(X, \CO_X) - g(B)$ may also serve as the relative irregularity for higher dimension fibrations over curves.

When $\dim F \ge 2$, by a very recent result of Barja, Pardini and Stoppino \cite[Theorem 1.2]{Barja_Pardini_Stoppino_equality} characterizing the variety satisfying the absolute Severi equality (see also \cite{Barja_Pardini_Stoppino_Severi, Lu_Zuo} when $\dim F = 2$), we know that  (2) actually implies (1). However, our proof of (1) is independent of (2).


\subsection{Related results}

If more assumptions on the Albanese map of $X$ are imposed, we obtain sharper results. For example, we prove the following theorem.

\begin{theorem} \label{main3}
	Let $f: X \to B$ be a relatively minimal fibration from a variety $X$ of dimension $n \ge 3$ to a smooth curve $B$. Denote by $F$ a general fiber of $f$. Suppose that $f$ is of maximal Albanese dimension and $a: X \to \Alb(X)$ is the Albanese map of $X$.
	\begin{itemize}
		\item [(1)] If $a|_F$ is birational, then 
		$$
		K_{X/B}^n \ge \frac{5n!}{2} \chi_f.
		$$
		
		\item [(2)] If $a|_F$ is not composed with an involution, then
		$$
		K_{X/B}^n \ge \frac{9n!}{4} \chi_f.
		$$
	\end{itemize}
\end{theorem}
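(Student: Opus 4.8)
The plan is to run the very same argument that proves Theorem \ref{main1}, but to feed into it a sharper Clifford--Severi-type inequality on the general fiber. Recall that in the proof of Theorem \ref{main1} the quantity $K_{X/B}^n$ is bounded from below by a slope (or continuous-rank) analysis over $B$: after twisting $\omega_{X/B}$ by a general torsion element $\CP \in \Pic^0(X)$, so that $\chi_f = \deg f_*(\omega_{X/B}\otimes \CP)$ (cf. \cite{Hacon_Pardini} and Proposition \ref{prop: chi_f degree}), one integrates a fiberwise Clifford--Severi inequality along $B$. The decisive numerical input is the sharp Clifford--Severi constant $2(n-1)!$ for the $(n-1)$-dimensional fiber $F$ of maximal Albanese dimension, and the ambient slope mechanism multiplies it by the expected factor $n$, producing the constant $2n! = n\cdot 2(n-1)!$ in \eqref{main inequality}.

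To obtain (1) and (2), I would simply replace this constant by the refined Clifford--Severi constants available under the stated hypotheses. By the refined (geographical) Severi-type inequalities of Barja, Pardini and Stoppino \cite{Barja_Pardini_Stoppino}, an $(n-1)$-dimensional variety of maximal Albanese dimension whose map to the abelian variety is birational onto its image satisfies a Clifford--Severi inequality with constant $\tfrac{5}{2}(n-1)!$, while one whose map is not composed with an involution satisfies it with constant $\tfrac{9}{4}(n-1)!$. Applying these on the general fiber $F$ with respect to $a|_F \colon F \to \Alb(X)$, and propagating them through the same slope computation, the factor $n$ upgrades them to $\tfrac{5}{2}n! = n\cdot \tfrac{5}{2}(n-1)!$ and $\tfrac{9}{4}n! = n\cdot \tfrac{9}{4}(n-1)!$, giving exactly the two asserted bounds. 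Note that birationality of $a|_F$ is strictly stronger than not being composed with an involution, which is consistent with the larger improvement factor in (1).

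The main point requiring care is the faithful transfer of the refined fiberwise constant through the relative construction. First, the hypotheses are imposed on $a|_F$, the restriction to $F$ of the Albanese map of the \emph{total space} $X$, rather than on the intrinsic Albanese map of $F$; one must therefore use the version of the Clifford--Severi inequality relative to a fixed map to an abelian variety, and check that $a|_F$ being birational (resp. not composed with an involution) is precisely the geometric input that those refined inequalities demand. Second, the continuous rank used over $B$ is computed with $\Pic^0(X)$-twists, so I would verify that a general $\CP \in \Pic^0(X)$ restricts to a sufficiently general element for the fiberwise inequality, so that the improvement is not diluted in the limiting and integration steps. Finally, one should confirm that the relevant property of $a|_F$ holds for the general fiber and is stable in the family; granting these compatibilities, the improved constants pass to $K_{X/B}^n$ verbatim, and no analysis of the equality case is needed here.
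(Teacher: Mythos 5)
Your overall strategy is the paper's: rerun the proof of Theorem \ref{main1} with a sharper fiberwise Clifford-type input and let the slope machinery multiply the constant by $n$. But there are two concrete gaps. First, the fiberwise input cannot be the ``continuous'' Clifford--Severi inequalities of Barja--Pardini--Stoppino quoted as a black box. The slope computation (Proposition \ref{prop: slope}) does not apply a single inequality to $L|_F$ and integrate; it runs Xiao's method on the Harder--Narasimhan filtration of $f_*\CO_X(\rounddown{L})$ and needs, for \emph{each} piece, an explicit bound of the form $h^0(F_1, N_i|_{F_1}) \le c\,(N_i|_{F_1})^{n-1} + (n-1)\varepsilon(F, L|_F, M)$, valid for an arbitrary nef divisor $N_i|_{F_1} \le L|_{F_1}$ with $K_{F_1}-L|_{F_1}$ pseudo-effective, with an additive error term that can later be killed in the limit over \'etale covers. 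The BPS statements bound the volume against the \emph{continuous rank} of a subcanonical line bundle and have no such error term (note $h^0$ alone cannot satisfy a clean volume bound: take $L=0$). This is why the paper devotes Section 3 to proving its own explicit versions (Theorems \ref{thm: deg 1} and \ref{thm: deg 3}, reduced by induction to the surface Lemmas \ref{lem: deg 1 surface} and \ref{lem: deg 3 surface}), which are then fed into Proposition \ref{prop: sharper slope}. Your proposal is missing this entire layer, and it is where the real work lies.

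Second, the limiting argument runs over the multiplication-by-$m$ \'etale covers $X_m \to X$ (after a base change to reduce to $g(B)>0$), so the refined slope inequality must be applied to $f'_m$ with fiber $F'_m$ and map $b_m|_{F'_m}$. For part (1) birationality is clearly preserved, but for part (2) the statement that $b_m|_{F'_m}$ is still not composed with an involution is not automatic: the paper proves it via a Galois-theoretic argument (Lemma \ref{lem: involution}), valid only for primes $m > \deg a|_F$. Your remark that one should check the property is ``stable in the family'' addresses variation over $B$, not this issue, which is the one that actually needs an argument.
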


Combining Theorem \ref{main3} in the $g(B) = 0$ case with the method in \cite[Proposition 14]{Barja_Stoppino}, it is easy to get the following conclusion which was recently obtained by Barja, Pardini and Stoppino in \cite[\S 1]{Barja_Pardini_Stoppino}.

\begin{coro} \label{main4}
	Let $X$ be a minimal variety of general type of dimension $n \ge 3$. Suppose that $X$ is of maximal Albanese dimension. 
	\begin{itemize}
		\item [(1)] If the Albanese map of $X$ is birational onto its image, then
		$$
		K_X^n \ge \frac{5n!}{2} \chi(X, \omega_X).
		$$
		
		\item [(2)] If the Albanese map of $X$ is not composed with an involution, then 
		$$
		K_X^n \ge \frac{9n!}{4} \chi(X, \omega_X).
		$$
	\end{itemize}
\end{coro}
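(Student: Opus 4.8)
The plan is to reduce Corollary~\ref{main4} to the $g(B)=0$ case of Theorem~\ref{main3}, following verbatim the covering-and-pencil construction of Barja and Stoppino in \cite[Proposition 14]{Barja_Stoppino}, which is exactly how the absolute Severi inequality is extracted from Theorem~\ref{main1}. Write $A=\Alb(X)$, $q=\dim A=h^1(X,\CO_X)$, let $a:X\to A$ be the Albanese map, and fix a symmetric ample line bundle $M$ on $A$. For each integer $d\ge 1$ let $\mu_d:A\to A$ be multiplication by $d$ and set $X_d:=X\times_{A,\mu_d}A$, so that the first projection $p:X_d\to X$ is \'etale of degree $d^{2q}$ while the second projection $q_d:X_d\to A$ is, up to the isogeny $\mu_d$, the Albanese map of $X_d$, with $a\circ p=\mu_d\circ q_d$. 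Since $p$ is \'etale, $X_d$ is again minimal of general type with $K_{X_d}^n=d^{2q}K_X^n$ and $\chi(X_d,\omega_{X_d})=d^{2q}\chi(X,\omega_X)$. Pulling a general pencil $\Lambda\subset|M|$ back along $q_d$ and resolving its base locus, I obtain a fibration $f_d:\widetilde{X}_d\to\PP^1$ whose general fiber $F_d$ dominates a general member $D\in|M|$ via $q_d$. Because $\mu_d^*M\equiv d^2M$, the fiber class satisfies $q_d^*M\equiv\tfrac{1}{d^2}\,p^*a^*M$, so the fibers become ``thin'' as $d\to\infty$; this is what drives the asymptotics below.

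The next step is to check that the hypotheses of Theorem~\ref{main3} are inherited by $f_d$. Since $a\circ p=\mu_d\circ q_d$ with $\mu_d$ \'etale, the generic degree of $q_d$ onto its image equals that of $a$; hence if $a$ is birational onto its image then so is $q_d$, and therefore its restriction $q_d|_{F_d}$ to a general fiber is birational onto its image, which is case~(1). Likewise, if $a$ is not composed with an involution then neither is $q_d$, and for a general $D\in|M|$ the restriction $q_d|_{F_d}$ remains not composed with an involution, which is case~(2). In either case Theorem~\ref{main3} applies to $f_d$ (after passing, if necessary, to a relatively minimal model, which leaves $\chi_{f_d}$ unchanged and does not increase $K_{\widetilde{X}_d/\PP^1}^n$), giving $K_{\widetilde{X}_d/\PP^1}^n\ge c\,n!\,\chi_{f_d}$ with $c=\tfrac{5}{2}$ in case~(1) and $c=\tfrac{9}{4}$ in case~(2).

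The final step is an asymptotic computation in $d$. From $K_{X_d/\PP^1}=K_{X_d}+2F_d\equiv p^*\!\big(K_X+\tfrac{2}{d^2}a^*M\big)$ and $\deg p=d^{2q}$ one expands
\[
K_{\widetilde{X}_d/\PP^1}^n=d^{2q}\Big(K_X+\tfrac{2}{d^2}a^*M\Big)^n+(\text{blow-up corrections})=d^{2q}K_X^n+O(d^{2q-2}).
\]
On the other hand, since $\chi(\PP^1,\omega_{\PP^1})=-1$ we have $\chi_{f_d}=\chi(X_d,\omega_{X_d})+\chi(F_d,\omega_{F_d})$, and because $K_{F_d}^{n-1}=(p^*K_X+q_d^*M)^{n-1}\cdot q_d^*M$ has leading term $d^{2q-2}\,K_X^{n-1}a^*M$, asymptotic Riemann--Roch gives $\chi(F_d,\omega_{F_d})=O(d^{2q-2})$, whence
\[
\chi_{f_d}=d^{2q}\chi(X,\omega_X)+O(d^{2q-2}).
\]
Substituting these into $K_{\widetilde{X}_d/\PP^1}^n\ge c\,n!\,\chi_{f_d}$, dividing by $d^{2q}$, and letting $d\to\infty$ yields $K_X^n\ge c\,n!\,\chi(X,\omega_X)$, as desired.

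I expect the main obstacle to be the descent of the hypothesis ``$a$ is not composed with an involution'' to the general fiber $F_d$ in case~(2): one must rule out an involution on $q_d|_{F_d}$ arising from a general divisorial section even when no such involution exists globally, which is a Lefschetz-type descent that the construction in \cite[Proposition 14]{Barja_Stoppino} is designed to supply. The remaining technical points---arranging the base locus to be smooth so that the blow-up corrections are genuinely $O(d^{2q-4})$, and controlling the relative minimalization so that only lower-order terms are lost---are routine and do not affect the leading $d^{2q}$ coefficients on which the limit depends.
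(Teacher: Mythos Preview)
Your proposal is correct and follows exactly the paper's approach: the paper deduces Corollary~\ref{main4} from the $g(B)=0$ case of Theorem~\ref{main3} via the covering-and-pencil argument of \cite[Proposition~14]{Barja_Stoppino}, giving no further detail. The one point to sharpen is that the implication ``$a$ not composed with an involution $\Rightarrow$ $q_d$ not composed with an involution'' is not a pure degree count as your ``likewise'' suggests---it needs the Galois-theoretic Lemma~\ref{lem: involution} (so $d$ should range over primes larger than $\deg a$), the same device already used inside the proof of Theorem~\ref{main3}.
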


In the same spirit as before, we may view Theorem \ref{main3} as a relative version of Corollary \ref{main4}.

In \cite{Barja_Pardini_Stoppino}, Barja, Pardini and Stoppino consider a more general map $a: X \to A$ such that $a^*: \Pic^0(A) \to \Pic^0(X)$ is injective (which they call strongly generating), and prove Corollary \ref{main4} when $a$ is birational or when $a$ is not composed with an involution. In fact, by the universal property of the Albanese map, we see that if the $a$ is birational or is not composed with an involution, so is the Albanese map of $X$.

Furthermore, we would like to mention that the proof of the absolute Severi type inequalities by Barja, Pardini and Stoppino in \cite{Barja_Pardini_Stoppino} relies on their study of the continuous rank function. More precisely, they deduce these absolute results by integrating the derivative of the so-called continuous rank function. From the viewpoint of our paper, those absolute inequalities are just consequences of their corresponding relative counterparts. To summarize, we have seen again, as in the work of Pardini \cite{Pardini}, that the study of the \emph{relative geography}, namely the relation among relative birational invariants (such as the relative canonical volume, the relative Euler characteristic, etc) does play a crucial role in understanding the geography of algebraic varieties in the classical sense.

\subsection*{Notation and conventions} In this paper, a fibration always means a surjective morphism with connected fibers.

Let $f: X \to B$ be a fibration over a curve $B$. We say that $f$ is \emph{relatively minimal}, if $X$ is normal with at worst terminal singularities and $K_X$ is $f$-nef. The assumption implies that a general fiber $F$ of $f$ is also normal with at worst terminal singularities by the adjunction. Moreover, if a general fiber of $f$ is of maximal Albanese dimension (which is exactly under the setting of Theorem \ref{main1}), then the relative minimality also ensures that $K_{X/B}$ is nef.\footnote{In fact, Fujino \cite[Theorem 1.1]{Fujino} proved that in this case, the general fiber has a good minimal model. Thus by a result of Nakayama \cite[Theorem 5]{Nakayama}, $K_X$ is $f$-semi-ample. Using the argument as in the proof of \cite[Theorem 1.4]{Ohno}, we deduce that $K_{X/B}$ is nef.}

For divisors, we always use $\sim$ to denote the linear equivalence and use $\equiv$ to denote the numerical equivalence. Let $D_1$ and $D_2$ be two $\QQ$-divisors on a variety $V$. The notation $D_1 \ge D_2$ means that $D_1-D_2$ is effective. Let $D$ be a $\QQ$-divisor on $V$. We use $\rounddown{D}$ to denote its integral part. The volume of $D$ is defined as
$$
\vol(D):=\limsup_{m \to \infty} \frac{h^0(V, \rounddown{mD})}{m^{\dim V} / (\dim V)!}.
$$

\subsection*{Acknowledgment} 
Y.H. would like to thank Professors JongHae Keum and Jun-Muk Hwang for their generous support during  his stay at KIAS. T.Z. would like to thank Professor Miguel \'A. Barja for the comment on his conjectural inequality \eqref{main inequality} in an email in 2017 and a lot more valuable comments on the first version of this paper, as well as Professor Kang Zuo for many enlightening comments about Viehweg's result in \cite{Viehweg} which is crucial for proving Theorem \ref{main2}. T.Z also would like to thank Professors Zhi Jiang and Lidia Stoppino for their interest in this paper. Both authors would like to thank the anonymous referee sincerely for his/her comments and suggestions. 

Y. H. is supported by National Researcher Program of National Research Foundation of Korea (Grant No. 2010-0020413) and the Shanghai Pujiang Program Grant No. 21PJ1405200 . T.Z. is supported by the National Natural Science Foundation of China (NSFC) General Grant No. 12071139 and the Science and Technology Commission of Shanghai Municipality (Grant No. 18dz2271000).

\section{A Clifford type inequality}

In this section, we recall a Clifford type result in \cite{Yuan_Zhang} that will be used afterwards. All results in this section hold also in positive characteristics.

\subsection{$\varepsilon$ for divisors}
Let $V$ be a smooth variety of dimension $n > 0$ and let $L$ be a $\QQ$-divisor on $V$. For any big divisor $M$ on $V$ with $|M|$ base point free, take the smallest integer $\lambda_M > 0$ so that the divisor $\lambda_M M - L$ is pseudo-effective. When $n \ge 2$, we define
$$
\varepsilon(V, L, M):=(\lambda_M + 1)^{n-1} M^n.
$$
When $n=1$, we simply set 
$$
\varepsilon(V, L, M) = 1.
$$ 
For any $n > 0$, define
$$
\varepsilon(V, L) := \inf_M \varepsilon(V, L, M),
$$
where the infimum is taken over all divisors $M$ on $V$ chosen as above. 
In particular, when $n=1$, we have 
$$
\varepsilon(V, L) = 1.
$$

It is straightforward to check that 

\begin{prop} \label{prop: epsilon}
	The above $\varepsilon$ satisfies the following properties:
	\begin{itemize}
		\item [(1)] If $L' \ge L$, then $\varepsilon(V, L', M) \ge \varepsilon(V, L, M)$ for any $M$ chosen as above. In particular, $\varepsilon(V, L') \ge \varepsilon(V, L)$.
		\item [(2)] Let $\sigma: V' \to V$ be a birational morphism. Then $\varepsilon(V', \sigma^*L) \le \varepsilon(V, L)$.
	\end{itemize}
\end{prop}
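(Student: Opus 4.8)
The plan is to trace through the definition of $\varepsilon$ and reduce both statements to a comparison of the integers $\lambda_M$ attached to the various data, since the self-intersection factor $M^n$ is either unchanged or easily controlled. Throughout one may assume $n \ge 2$, as for $n=1$ both sides of each inequality equal $1$ and there is nothing to prove. The only inputs needed are the elementary facts that the sum of a pseudo-effective divisor and an effective divisor is pseudo-effective, and that the pullback under a birational morphism of a base-point-free (resp. pseudo-effective) divisor is again base-point-free (resp. pseudo-effective).

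For (1), fix a big base-point-free $M$ and let $\lambda_M$ and $\lambda_M'$ denote the smallest admissible multiples computed with $L$ and $L'$ respectively. Since $L' - L \ge 0$, whenever $\lambda M - L'$ is pseudo-effective so is $\lambda M - L = (\lambda M - L') + (L' - L)$; hence every $\lambda$ admissible for $L'$ is admissible for $L$, and taking the minimum over the smaller admissible set forces $\lambda_M' \ge \lambda_M$. As $M^n > 0$ and $t \mapsto (t+1)^{n-1}$ is nondecreasing for $n \ge 2$, I get $\varepsilon(V, L', M) = (\lambda_M'+1)^{n-1} M^n \ge (\lambda_M + 1)^{n-1} M^n = \varepsilon(V, L, M)$. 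Passing to the infimum over $M$ on both sides yields $\varepsilon(V, L') \ge \varepsilon(V, L)$.

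For (2), the key observation is that the divisors eligible for computing $\varepsilon(V', \sigma^*L)$ include all pullbacks $\sigma^* M$ of big base-point-free divisors $M$ on $V$: indeed $\sigma^* M$ is big with $|\sigma^* M|$ base-point-free, while $|\lambda_M \sigma^* M| = |\sigma^*(\lambda_M M)|$ remains base-point-free and $\lambda_M \sigma^* M - \sigma^* L = \sigma^*(\lambda_M M - L)$ remains pseudo-effective. Thus $\lambda_M$ is admissible for the triple $(V', \sigma^* L, \sigma^* M)$, so the smallest admissible multiple for this triple is at most $\lambda_M$; combined with $(\sigma^* M)^n = M^n$ (projection formula) this gives $\varepsilon(V', \sigma^* L, \sigma^* M) \le \varepsilon(V, L, M)$ for every such $M$. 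Taking the infimum over $M$, and using that the infimum defining $\varepsilon(V', \sigma^* L)$ ranges over the possibly larger class of all big base-point-free divisors on $V'$ (which can only decrease the value), I conclude $\varepsilon(V', \sigma^* L) \le \varepsilon(V, L)$. The one point requiring care --- the closest thing to an obstacle --- is verifying that base-point-freeness genuinely descends along $\sigma^*$ (the subsystem $\sigma^*|\lambda_M M|$ is base-point-free since its base locus is $\sigma^{-1}(\emptyset) = \emptyset$) and that the two infima are compared in the correct direction; both are routine once the setup is in place.
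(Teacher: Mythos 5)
Your proof is correct, and it is exactly the routine verification the paper omits (the paper merely asserts the proposition is "straightforward to check"): part (1) follows from comparing the admissible sets of $\lambda$, and part (2) from observing that $\sigma^*M$ is an eligible competitor on $V'$ with the same $\lambda$ and the same top self-intersection. Nothing further is needed.
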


\subsection{A Clifford type inequality}
The main result in this section is the following one, which will be used later in the proof of Theorem \ref{main1}.
\begin{theorem} \label{thm: relative noether} 
	Let $V$ be a smooth variety of dimension $n > 0$. Suppose that $L$ is a $\QQ$-divisor on $V$ such that $K_V-L$ is pseudo-effective. Then
	$$
	h^0(V, \rounddown{L}) \le \frac{1}{2n!} \vol (L) + n \varepsilon(V, L).
	$$
\end{theorem}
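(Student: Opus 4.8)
The plan is to argue by induction on $n = \dim V$, reducing the dimension by slicing with a general member of a base point free linear system, with the curve case as the base. For $n=1$ we have $\varepsilon(V,L)=1$ by definition and $\vol(L)=\max\{\deg L,0\}$, so the assertion reduces to the elementary bound $h^0(V,\rounddown{L}) \le \tfrac12\deg L + 1$. The hypothesis that $K_V-L$ is pseudo-effective forces $\deg\rounddown{L}\le\deg L\le 2g-2$, and then the bound follows from Clifford's theorem when $\rounddown{L}$ is special and from Riemann--Roch when it is not; being characteristic-free, this is consistent with the stated fact that the whole section holds in positive characteristic.

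For the inductive step, fix any big $M$ with $|M|$ base point free, let $\lambda_M$ be as in the definition of $\varepsilon(V,L,M)$, and take a general $W\in|M|$. By Bertini, $W$ is smooth of dimension $n-1$, and for general $W$ one has $\rounddown{L}|_W=\rounddown{L|_W}$. The restriction sequences
$$
0 \to \CO_V(\rounddown{L} - (i+1)M) \to \CO_V(\rounddown{L} - iM) \to \CO_W\big(\rounddown{(L-iM)|_W}\big) \to 0
$$
telescope, upon taking $h^0$, to
$$
h^0(V, \rounddown{L}) \le \sum_{i=0}^{\lambda_M} h^0\big(W, \rounddown{(L-iM)|_W}\big),
$$
where the final kernel $h^0(V,\rounddown{L}-(\lambda_M+1)M)$ vanishes because $(\lambda_M+1)M-L$ is big (sum of the pseudo-effective $\lambda_M M-L$ and the big $M$). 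By adjunction, $K_W-(L-iM)|_W=(K_V-L)|_W+(i+1)M|_W$ is pseudo-effective, so the induction hypothesis applies to each slice and bounds $h^0(W,\rounddown{(L-iM)|_W})$ by $\tfrac{1}{2(n-1)!}\vol\big((L-iM)|_W\big)+(n-1)\varepsilon\big(W,(L-iM)|_W\big)$, the volume being computed on $W$.

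It remains to sum the two families of terms. For the error terms I would test on $W$ with $M|_W$: since $(\lambda_M M-L)|_W$ is pseudo-effective, the parameter for the $i$-th slice is at most $\lambda_M-i$, giving $\varepsilon(W,(L-iM)|_W)\le(\lambda_M-i+1)^{n-2}(M|_W)^{n-1}=(\lambda_M-i+1)^{n-2}M^n$; summing yields $(n-1)M^n\sum_{j=1}^{\lambda_M+1}j^{n-2}$, which by comparison with $\int x^{n-2}\,dx$ is $(\lambda_M+1)^{n-1}M^n$ plus a lower-order correction, i.e. $\varepsilon(V,L,M)$ up to terms that are again $O(\varepsilon(V,L,M))$. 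Since none of the budget $n\,\varepsilon(V,L,M)$ is spent on the leading volume term, these fit within it. The volume target is
$$
\sum_{i=0}^{\lambda_M}\tfrac{1}{2(n-1)!}\vol\big((L-iM)|_W\big) \le \tfrac{1}{2n!}\vol(L) + O\big(\varepsilon(V,L,M)\big),
$$
which, when $L$ is nef, is the exact computation $\sum_i (L-iM)^{n-1}\cdot M \approx \tfrac1n\big(L^n-(L-\lambda_M M)^n\big)\le\tfrac1n\vol(L)$ up to the lower-order correction. Taking the infimum over $M$ then absorbs all error into $n\,\varepsilon(V,L)$ and gives the claim.

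The main obstacle is precisely this last volume estimate in the \emph{non-nef} case. After applying induction on $W$ one faces the volume of the \emph{restriction} $\vol\big((L-iM)|_W\big)$, which in general exceeds the restricted volume $\mathrm{vol}_{V|W}(L-iM)$ whose integral is what the derivative of $t\mapsto\vol(L-tM)$ controls; this gap measures the loss of positivity under restriction and, unlike in the nef case, need not be of lower order. Closing it is the crux: I expect one must invoke Fujita approximation together with the continuity and monotonicity of the volume and restricted-volume functions, and crucially exploit the freedom in choosing $M$ (the infimum defining $\varepsilon(V,L)$) to render the positivity defect negligible relative to $\vol(L)$, thereby forcing the discrete slicing sum down to $\tfrac{1}{2n!}\vol(L)$ with only an $O(\varepsilon)$ error.
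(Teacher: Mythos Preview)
The paper's proof is a two-line reduction to \cite[Theorem 1.2]{Yuan_Zhang}, which gives the inequality for integral $L$; one then just notes $\vol(\rounddown{L})\le\vol(L)$ and $\varepsilon(V,\rounddown{L})\le\varepsilon(V,L)$. You are therefore attempting to reprove the cited result from scratch, and your outline misidentifies where the difficulty lies. The ``main obstacle'' you flag --- the non-nef case --- is dispatched immediately: replace $L$ by the movable part $L'$ of $|\rounddown{L}|$ on a suitable blow-up; then $h^0$ is unchanged while $\vol$ and $\varepsilon$ can only decrease (this is exactly Step 1 in the paper's proof of Theorem \ref{thm: subcanonicity}), so one may assume $L$ nef and $|L|$ base point free from the outset. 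No Fujita approximation or restricted-volume machinery is needed.

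The genuine gap is in what you call the ``exact computation'' for nef $L$. Even with $L$ nef, the twist $L-iM$ is not nef for $i>0$, so $\vol((L-iM)|_W)$ is not the intersection number $(L-iM)^{n-1}M$, and your telescoping identity fails. Already for $n=2$ one has $\vol((L-iM)|_W)=\max(LM-iM^2,0)$; summing the positive part gives roughly $(LM)^2/(2M^2)$, which by the Hodge index inequality $(LM)^2\ge L^2M^2$ is \emph{at least} $\tfrac12 L^2$, and the excess $\big((LM)^2-L^2M^2\big)/(2M^2)$ is in general not $O(\varepsilon(V,L,M))$ --- take $L$ of bidegree $(a,b)$ with $a\gg b$ and $M$ of bidegree $(1,1)$ on a product of high-genus curves, where the excess is $\sim a^2$ but $\varepsilon\sim a$. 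The fix, carried out in \cite{Yuan_Zhang} and reproduced in Section 3 of the present paper for the sharper variants, is to pass to the movable part at \emph{every} step: the filtration of Theorem \ref{thm: filtration} produces nef divisors $L_0>L_1>\cdots>L_N$ on a blow-up together with multiplicities $a_i$, and the numerical inequalities of Propositions \ref{prop: numerical inequality} and \ref{prop: sum of ai} then let the induction run with every restricted volume an honest intersection number.
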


\begin{proof}
	By \cite[Theorem 1.2]{Yuan_Zhang} which was stated only for integral divisors, we have
	$$
	h^0(V, \rounddown{L}) \le \frac{1}{2n!} \vol (\rounddown{L}) + n \varepsilon(V, \rounddown{L}).
	$$
	Note that $\vol(\rounddown{L}) \le \vol(L)$ and by Proposition \ref{prop: epsilon}, $\varepsilon(V, \rounddown{L}) \le \varepsilon(V, L)$. Thus the result follows easily.
\end{proof}

\begin{remark}
	As is explained in \cite{Yuan_Zhang}, Theorem \ref{thm: relative noether} is a natural generalization of the classical Clifford inequality.
\end{remark}

\section{Sharper estimate under extra assumptions}

To prove Theorem \ref{main2}, we need some estimates on the dimension of  $H^0(V, L)$ similar to Theorem \ref{thm: relative noether} but stronger. All the sharper bounds in this section are inspired by the work of Barja, Pardini, and Stoppino in \cite{Barja_Pardini_Stoppino}, where they proved the so-called ``continuous" estimates. However, under our setting we need explicit results instead, and the method we are going to employ is based on \cite{Yuan_Zhang,Zhang_Severi,Zhang_Slope}.

\subsection{A filtration for nef divisors} 

Let $f: V \to B$ be a fibration from a smooth variety $V$ of dimension $n$ to a smooth curve $B$ with a general fiber $F$. Let $L$ be a nef divisor on $V$. We first recall the following theorem.

\begin{theorem}  \cite[Theorem 4.1]{Zhang_Slope} \label{thm: filtration}
	Let $f: V \to B$, $F$ and $L$ be as above. Then there is a birational morphism $\sigma: V_L \to V$ and a sequence of triples
	$$
	\{(L_i, Z_i, a_i) | i=0, 1, \cdots, N\}
	$$
	on $V_L$ with the following properties:
	\begin{itemize}
		\item $(L_0, Z_0, a_0)=(\sigma^*L, 0, \nti_{f_L}(L_0))$ where $f_L: V_L \stackrel{\sigma}{\to} V \stackrel{f}{\to} B$ is the induced fibration. 
		\item For any $i=0, \cdots, N-1$, there is a decomposition
		$$
		|L_i-a_iF_L|= |L_{i+1}| + Z_{i+1}
		$$
		such that $Z_{i+1} \ge 0$ is the fixed part of $|L_i-a_iF_L|$ and that the movable part $|L_{i+1}|$ of $|L_i-a_iF_L|$ is base point free. Here $F_L = \sigma^*F$ denotes a general fiber of $f_L$, and $a_{i} = \nti_{f_L}(L_{i})$.
		\item We have $h^0(V_L, L_N-a_NF_L)=0$.
	\end{itemize}
\end{theorem}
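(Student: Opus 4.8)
The plan is to prove this by the Harder--Narasimhan--Xiao slope method, realized geometrically through an iterated movable/fixed decomposition. First I would fix the relevant invariant: for a nef divisor $L_i$ on a birational model of $V$ over $B$, I read $\nti_{f_L}(L_i)$ as the integer part of the top Harder--Narasimhan slope of the locally free sheaf $f_{L*}\CO_{V_L}(L_i)$ on the curve $B$, so that it is the largest integer $a$ for which the twist by $-aF_L$ still lets the maximal-slope piece contribute sections. With this reading the construction becomes an induction: given $L_i$, set $a_i = \nti_{f_L}(L_i)$, form the complete linear system $|L_i - a_i F_L|$, and split it as $|L_i - a_iF_L| = |L_{i+1}| + Z_{i+1}$ into its movable part $|L_{i+1}|$ and its fixed part $Z_{i+1} \ge 0$. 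Base-point-freeness of $|L_{i+1}|$ is to be forced by blowing up, which also makes $L_{i+1}$ nef, so that the same step may be applied again.

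The second step is termination, which is also what makes the scheme effective. Here I would lean on the Harder--Narasimhan filtration of $\CE = f_* \CO_V(L)$ on $B$: it has only finitely many graded pieces, at most $\rank \CE$ of them, and the induction above peels off one piece at a time, the integer $a_i$ being controlled by the $i$-th slope. Concretely, I would show that $h^0(F, L_i|_F)$ for a general fiber $F$ is strictly decreasing in $i$, since the restriction $L_{i+1}|_F$ has strictly fewer sections than $L_i|_F$ once the maximal-slope contribution has been removed. As $h^0(F, L_i|_F)$ is a nonnegative integer, the process stops after finitely many steps $N$, at which point no sections survive and $h^0(V_L, L_N - a_N F_L) = 0$, giving the last required property.

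The main obstacle, and the third step, is to realize this blow-up-dependent induction on a \emph{single} birational model $\sigma\colon V_L \to V$ rather than on an ever-growing tower. Since termination bounds the number of steps $N$ a priori, only finitely many linear systems enter the construction. I would therefore first run the induction on a tower $V \leftarrow V^{(1)} \leftarrow \cdots \leftarrow V^{(N)}$ of blow-ups resolving the successive base loci, then take $\sigma$ to be a single resolution dominating $V^{(N)}$ and pull every divisor back to $V_L$. The delicate point to verify is that this pullback is compatible with all the data: the quantity $\nti_{f_L}(L_i)$ must be a birational invariant so that the $a_i$ are unchanged under further blow-ups, and taking movable and fixed parts must commute with pullback up to the exceptional locus of $\sigma$, so that the identity $|L_i - a_i F_L| = |L_{i+1}| + Z_{i+1}$ persists on $V_L$ with $|L_{i+1}|$ now genuinely base point free. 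I expect this reconciliation --- keeping the slope integers, the nefness of each $L_i$, and the movable/fixed decompositions mutually consistent after descending to one fixed model --- to be the real content of the argument, whereas the slope-counting of the second step is the conceptual input that makes the single-model reduction possible.
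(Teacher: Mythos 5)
A preliminary remark: the paper itself gives no proof of this statement --- it is recalled verbatim from \cite{Zhang_Slope} --- but it does pin down the one piece of data your argument depends on, namely the definition given immediately after the theorem: $\nti_{f_L}(L_i) := \min \{a \in \ZZ \mid L_i - aF_L \,\mbox{is not nef}\}$, a nef threshold. You instead read $\nti_{f_L}(L_i)$ as the integer part of the top Harder--Narasimhan slope of $f_{L*}\CO_{V_L}(L_i)$ and organize the whole proof around the HN filtration of $f_*\CO_V(L)$. That is a different invariant and a different construction: the HN/Xiao machinery is what the paper reviews separately in \S 5.1, whereas the filtration of this theorem is an elementary iterated movable/fixed decomposition in which the Harder--Narasimhan filtration plays no role. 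In particular the induction does not ``peel off one graded piece at a time,'' and $N$ is not bounded by $\rank f_*\CO_V(L)$.

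This misreading breaks the termination step, which is the only place where something genuinely has to be proved (your third step --- descending to a single model --- is fine, since the nef threshold and the movable/fixed decomposition are both compatible with pullback along proper birational morphisms). The quantity $h^0(F, L_i|_F)$ is only non-increasing, not strictly decreasing: if the fixed part $Z_{i+1}$ is vertical, then $Z_{i+1}|_{F_L}=0$ on a general fiber, so $L_{i+1}|_{F_L} = L_i|_{F_L}$ and no fiberwise sections are lost at that step; so your counting does not terminate the process. The correct argument uses the nef-threshold definition in an essential way: each $L_i$ is nef (after the blow-up making $|L_{i+1}|$ base point free), hence $L_i - aF_L$ is nef for every $a \le 0$ and therefore $a_i = \nti_{f_L}(L_i) \ge 1$; consequently
$$
h^0(V_L, L_{i+1}) = h^0(V_L, L_i - a_iF_L) \le h^0(V_L, L_i - F_L) < h^0(V_L, L_i)
$$
whenever $h^0(V_L, L_i) > 0$, the strict inequality holding because a general fiber is never contained in the base locus of a nonempty linear system. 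Hence the process stops after at most $h^0(V, L)$ steps with $h^0(V_L, L_N - a_NF_L) = 0$. Note also that with your definition nothing forces $a_i \ge 1$: the top HN slope can lie in $[0,1)$ while sections survive, in which case $a_i = 0$, the system $|L_i - a_iF_L| = |L_i|$ is already base point free for $i \ge 1$, and your induction makes no progress at all.
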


In the above theorem, for any $0 \le i \le N$, the number $\nti_{f_L}(L_i)$ is defined by
$$
\nti_{f_L}(L_i) := \min \{a \in \ZZ| L_i - aF_L \,\mbox{is not nef}\}.
$$
Thus via Theorem \ref{thm: filtration}, we obtain a filtration
$$
\sigma^* L = L_0 > L_1 > \cdots > L_N \ge 0
$$
of nef divisors on a birational model $V_L$ of $V$. For simplicity, we still denote by $F$ a general fiber of $f_L: V_L \to B$ in the rest of this section.

\begin{prop} \label{prop: numerical inequality} \cite[Proposition 2.2]{Yuan_Zhang}
	We have the following two inequalities:
	\begin{align*}
		h^0(V, L) & \le \sum_{i=0}^{N} a_i h^0(F, L_i|_F); \\
		L^n & \ge n \sum_{i=0}^{N} a_i (L_i|_F)^{n-1} - n(L_0|_F)^{n-1}.
	\end{align*}
\end{prop}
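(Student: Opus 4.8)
The plan is to prove the two inequalities separately, both by telescoping along the filtration $\sigma^*L = L_0 > L_1 > \cdots > L_N$ from Theorem \ref{thm: filtration}, restricting to a general fiber $F$ of $f_L$ and exploiting that $F^2 = 0$ (since $F$ is a fiber of a fibration over the curve $B$, so $(L_i - aF)|_F = L_i|_F$ for every integer $a$). Throughout I use that each $L_i$ is nef, that $a_i = \nti_{f_L}(L_i) \ge 1$ (because $L_i$ itself is nef, so the first non-nef level is at least $1$), and that $L_i - a_iF = L_{i+1} + Z_{i+1}$ with $Z_{i+1} \ge 0$ and $|L_{i+1}|$ base point free.

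For the cohomological inequality I would first reduce to the birational model via $h^0(V, L) = h^0(V_L, L_0)$. For each $i$ and each $a \ge 0$, the restriction sequence $0 \to \CO_{V_L}(L_i - (a+1)F) \to \CO_{V_L}(L_i - aF) \to \CO_F(L_i|_F) \to 0$ gives $h^0(V_L, L_i - aF) - h^0(V_L, L_i - (a+1)F) \le h^0(F, L_i|_F)$. Summing over $0 \le a \le a_i - 1$ and using that passing to the fixed-part decomposition does not change $h^0$, i.e. $h^0(V_L, L_i - a_iF) = h^0(V_L, L_{i+1})$, yields $h^0(V_L, L_i) - h^0(V_L, L_{i+1}) \le a_i h^0(F, L_i|_F)$ for $i < N$, while the terminal vanishing $h^0(V_L, L_N - a_NF) = 0$ gives $h^0(V_L, L_N) \le a_N h^0(F, L_N|_F)$. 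Telescoping these estimates produces the first inequality.

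For the intersection inequality, the naive telescoping of $L_i^n$ down the filtration is obstructed precisely because $L_i - a_iF$ is, by the very definition of $a_i$, \emph{not} nef, so one cannot directly compare $(L_i - a_iF)^n$ with $L_{i+1}^n$. I expect this to be the main obstacle, and the device to circumvent it is to shift the whole filtration by $F$. Since $F^2 = 0$ forces $(L_0 + F)^n = L_0^n + n(L_0|_F)^{n-1}$, the desired inequality is equivalent to $(L_0 + F)^n \ge n \sum_{i=0}^N a_i (L_i|_F)^{n-1}$, which already absorbs the correction term $-n(L_0|_F)^{n-1}$. Working with $L_i + F$, and noting that $L_i - (a_i - 1)F$ is nef (as $a_i$ is the first non-nef level), the same $F^2 = 0$ expansion gives $(L_i + F)^n = (L_i - (a_i-1)F)^n + n a_i (L_i|_F)^{n-1}$.

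The crucial point is that the shifted subtracted divisor is now a sum of two nef divisors: from $L_i - a_iF = L_{i+1} + Z_{i+1}$ one gets $L_i - (a_i - 1)F = (L_{i+1} + F) + Z_{i+1}$, where both $L_i - (a_i - 1)F$ and $L_{i+1} + F$ are nef while $Z_{i+1} \ge 0$. Hence $(L_i - (a_i-1)F)^n \ge (L_{i+1}+F)^n$ by the elementary fact that $P^n \ge Q^n$ whenever $P = Q + E$ with $P, Q$ nef and $E$ effective (expand $P^n - Q^n = E \cdot \sum_{j} P^{n-1-j}Q^j \ge 0$). Feeding this into the identity above gives $(L_i + F)^n \ge (L_{i+1}+F)^n + n a_i (L_i|_F)^{n-1}$ for $i < N$, and at the bottom $(L_N + F)^n \ge n a_N (L_N|_F)^{n-1}$ since $(L_N - (a_N-1)F)^n \ge 0$. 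Telescoping from $L_0 + F$ downward then yields $(L_0+F)^n \ge n\sum_{i=0}^N a_i (L_i|_F)^{n-1}$, which is exactly the second inequality.
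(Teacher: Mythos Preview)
Your argument is correct. The paper does not give its own proof of this proposition; it merely cites \cite[Proposition 2.2]{Yuan_Zhang}, so there is no in-paper argument to compare against. Your proof is the standard one: the first inequality is the straightforward telescoping along restriction sequences to $F$, and for the second inequality your shift-by-$F$ device is exactly the right way to sidestep the non-nefness of $L_i - a_iF$, reducing everything to the elementary fact that $P^n \ge Q^n$ whenever $P,Q$ are nef with $P-Q$ effective. All the ingredients you invoke (that $a_i \ge 1$, that $L_i-(a_i-1)F$ is nef by the definition of $\nti_{f_L}$, that $\CO_F(F)\cong\CO_F$ for a general fiber, and that $h^0(V_L,L_i-a_iF)=h^0(V_L,L_{i+1})$ since $Z_{i+1}$ is the fixed part) are valid in the setting of Theorem~\ref{thm: filtration}.
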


\begin{prop} \label{prop: sum of ai} \cite[Lemma 2.3]{Yuan_Zhang}
	We have
	$$
	L_0^n \ge \left(\sum_{i=0}^{N} a_i - 1 \right) (L_0|_F)^{n-1}.
	$$
\end{prop}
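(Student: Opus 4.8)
The plan is to bound $L_0^n$ by a telescoping argument in which the class $L_0^{n-1}$ is kept \emph{fixed} throughout, extracting the decomposition of each successive difference $L_i - L_{i+1}$ directly from the filtration of Theorem \ref{thm: filtration}. The point of freezing $L_0^{n-1}$, rather than passing to $L_i^n$ at each stage, is that $L_0$ is nef, so $L_0^{n-1}$ is a nef $(n-1)$-cycle whose intersection with any effective divisor is non-negative; this lets me discard all the fixed parts $Z_{i+1}$ cleanly, leaving only the fiber contributions.

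First I would record, for each $i = 0, \dots, N-1$, the linear equivalence coming from the decomposition $|L_i - a_i F| = |L_{i+1}| + Z_{i+1}$, namely $L_i - L_{i+1} \sim a_i F + Z_{i+1}$ with $Z_{i+1} \ge 0$. Intersecting with $L_0^{n-1}$ gives $L_0^{n-1}\cdot L_i - L_0^{n-1}\cdot L_{i+1} = a_i\, L_0^{n-1}\cdot F + L_0^{n-1}\cdot Z_{i+1}$, where $L_0^{n-1}\cdot F = (L_0|_F)^{n-1}$ since $F$ is a general fiber, and $L_0^{n-1}\cdot Z_{i+1}\ge 0$ by nefness of $L_0$ and effectivity of $Z_{i+1}$. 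Writing $e := (L_0|_F)^{n-1}$ and summing the inequalities $L_0^{n-1}\cdot L_i - L_0^{n-1}\cdot L_{i+1} \ge a_i e$ over $i = 0, \dots, N-1$ telescopes to $L_0^n \ge \big(\sum_{i=0}^{N-1} a_i\big)\, e + L_0^{n-1}\cdot L_N$.

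It remains to bound the leftover term $L_0^{n-1}\cdot L_N$, and this is the one spot where I expect the real content, and the origin of the $-1$, to sit. By definition $a_N = \nti_{f_L}(L_N)$ is the least integer with $L_N - a_N F$ not nef; since $L_N$ is itself nef we have $a_N \ge 1$, and by minimality $L_N - (a_N-1)F$ is nef. Intersecting this nef class against the nef class $L_0^{n-1}$ yields $L_0^{n-1}\cdot L_N \ge (a_N-1)\,L_0^{n-1}\cdot F = (a_N-1)e$. Substituting into the telescoped inequality gives $L_0^n \ge \big(\sum_{i=0}^{N-1} a_i\big)e + (a_N-1)e = \big(\sum_{i=0}^{N} a_i - 1\big)(L_0|_F)^{n-1}$, as desired. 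The key realization is that the last piece can only be bounded below by $(a_N-1)e$ rather than $a_N e$, which is exactly where the $-1$ enters; a naive attempt to prove the statement without this deficit would be false.

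The only genuine point to double-check is that every class I intersect against $L_0^{n-1}$ is either effective (the $Z_{i+1}$) or nef (the threshold divisor $L_N - (a_N-1)F$), so that nefness of $L_0$ forces the sign; all of these follow from the structure of the filtration and the definition of $\nti_{f_L}$. I would also note that the estimate uses the fixed parts $Z_{i+1}$ only through their effectivity, and never the individual fiber-degrees $(L_i|_F)^{n-1}$, which is why the bound comes out in the clean form with a single $(L_0|_F)^{n-1}$ factor.
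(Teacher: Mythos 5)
Your proof is correct, and since the paper itself gives no argument here (it simply cites \cite[Lemma 2.3]{Yuan_Zhang}), the relevant comparison is with that source, whose proof runs along the same lines as yours: intersect the telescoping decomposition $L_i - L_{i+1} \sim a_iF + Z_{i+1}$ against the fixed nef class $L_0^{n-1}$, discard the fixed parts by effectivity, and obtain the $-1$ from the nefness of $L_N - (a_N-1)F$ guaranteed by the minimality in the definition of $a_N = \nti_{f_L}(L_N)$. You have correctly identified both the reason $a_N$ can only contribute $(a_N-1)(L_0|_F)^{n-1}$ and the fact that all the classes you pair with $L_0^{n-1}$ are effective or nef, so nothing further is needed.
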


\subsection{Sharper bound involving the subcanonicity} \label{subcanonicity}
Let $V$ be a smooth variety of dimension $n > 0$ with the Kodaira dimension $\kappa(V) \ge 0$, and let $L$ be a $\QQ$-divisor on $V$. Let $M$ be a big divisor on $V$ such that $|M|$ is base point free. We recall that the numerical subcanonicity of $L$ with respect to $M$ is defined in \cite[Definition 5.1]{Barja_Pardini_Stoppino} as follows:
$$
r(L, M):= \frac{LM^{n-1}}{K_VM^{n-1}}.
$$
When $n=1$, set $r(L, M) = \frac{\deg L}{\deg K_V}$. When $K_VM^{n-1} = 0$, we have $\kappa(V) = 0$. In this case, we set $r(L, M) = + \infty$. Define a function $\delta$ as follows:
$$
\delta(x) = \left\{
    \begin{array} {ll}
        2, & x \le 1; \\
        \frac{2x}{2x-1}, & x > 1.
    \end{array}
    \right.
$$

\begin{theorem} \label{thm: subcanonicity}
	Let $L$ and $M$ be as above, and write $r = r(L, M)$. Then
	$$
	h^0(V, \rounddown{L}) \le \frac{1}{\delta(r)n!} \vol(L) + n \varepsilon(V, L, M)
	$$
\end{theorem}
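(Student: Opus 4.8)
The plan is to prove the bound by induction on $n = \dim V$, running the filtration machinery of \cite{Yuan_Zhang} (Theorem \ref{thm: filtration} together with Propositions \ref{prop: numerical inequality} and \ref{prop: sum of ai}), but carrying the subcanonicity $r$ as an extra bookkeeping parameter. First I would reduce to the case where $L$ is an integral nef divisor: replacing $\rounddown{L}$ by the movable part of its linear system on a suitable resolution $\sigma$ does not change $h^0$, and it does not increase $\vol$, $r(\cdot, \sigma^*M)$, or $\varepsilon(\cdot, \cdot, \sigma^*M)$, as one checks directly from the definitions (compare Proposition \ref{prop: epsilon}; note that the discrepancy divisor is $\sigma$-exceptional, so it does not affect $K_V M^{n-1}$). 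The base case $n=1$ is a refined Clifford inequality on a curve of genus $g$: when $r \le 1$ one has $\deg\rounddown{L} \le 2g-2$, so Clifford's inequality (with Riemann--Roch covering the non-special case) gives $h^0 \le \tfrac12 \deg L + 1$; when $r > 1$, Riemann--Roch gives $h^0 \le \deg\rounddown{L} - g + 1$, and a direct computation using $\deg L = r(2g-2)$ shows this is at most $\tfrac{2r-1}{2r}\deg L + 1 = \tfrac{1}{\delta(r)}\vol(L) + \varepsilon(V, L, M)$.

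For the inductive step I would fiber $V$ over $B = \PP^1$ using a general pencil in $|M|$, so that (in the setup of Theorem \ref{thm: filtration}) a general fiber $F$ may be identified with a general member of $|M|$: it is smooth of dimension $n-1$, satisfies $\kappa(F) \ge 0$ (by adjunction, since $K_V$ is $\QQ$-effective), and $M|_F$ is big and base point free. Applying Theorem \ref{thm: filtration} to the nef divisor $L$ produces the filtration $L_0 > L_1 > \cdots > L_N$, and the first inequality of Proposition \ref{prop: numerical inequality} gives
$$
h^0(V, L) \le \sum_{i=0}^N a_i\, h^0(F, L_i|_F).
$$
To each term I would apply the induction hypothesis on $F$ with the polarization $M|_F$, obtaining a bound with coefficient $\tfrac{1}{\delta(r_i)(n-1)!}$, where $r_i = r(L_i|_F, M|_F)$. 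The crucial new point is that subcanonicity does not increase along the induction: by adjunction $K_F (M|_F)^{n-2} = (K_V + M)M^{n-1} = K_V M^{n-1} + M^n$, while $(L_i|_F)(M|_F)^{n-2} = L_i M^{n-1} \le L_0 M^{n-1} = L M^{n-1}$, so that
$$
r_i = \frac{L_i M^{n-1}}{K_V M^{n-1} + M^n} \le \frac{L M^{n-1}}{K_V M^{n-1}} = r.
$$
Since $\delta$ is non-increasing, $\tfrac{1}{\delta(r_i)} \le \tfrac{1}{\delta(r)}$, so the factor $\tfrac{1}{\delta(r)}$ can be pulled out of the sum; the second inequality of Proposition \ref{prop: numerical inequality} then converts $\sum_i a_i (L_i|_F)^{n-1}$ into $\tfrac1n \vol(L)$ up to the single term $(L_0|_F)^{n-1} = L^{n-1} M$, producing the main term $\tfrac{1}{\delta(r)n!}\vol(L)$.

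It remains to absorb the error contributions into $n\,\varepsilon(V, L, M)$. These come from the leftover $(L_0|_F)^{n-1} = L^{n-1}M$, bounded by $(\lambda_M + 1)^{n-1} M^n = \varepsilon(V,L,M)$ since $\lambda_M M - L$ is pseudo-effective and $M$ is nef, and from the sum $\sum_i a_i (n-1)\,\varepsilon(F, L_i|_F, M|_F)$ of the inductive error terms, which I would estimate exactly as in \cite{Yuan_Zhang}: each $\varepsilon(F, L_i|_F, M|_F)$ is controlled by $M^n$ and the relevant jumping number, and $\sum_i a_i$ is controlled by Proposition \ref{prop: sum of ai}. I expect this error bookkeeping to be the main obstacle, since one must show that, after summing over the entire filtration, the accumulated $\varepsilon$-terms fit inside the single clean quantity $n\,\varepsilon(V,L,M)$, which requires carefully relating the polarizations and integers $\lambda$ at each level of the induction back to the original pair $(V, L, M)$. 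By contrast, the genuinely new input over the non-subcanonical estimate of Theorem \ref{thm: relative noether} is conceptually simple once isolated: the monotonicity $r_i \le r$, combined with the monotonicity of $\delta$, guarantees that the worst subcanonicity sits at the top of the induction, so the coefficient $\tfrac{1}{\delta(r)}$ dominates every term.
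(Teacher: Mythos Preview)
Your approach is essentially the paper's own: reduce to $L$ nef, settle $n=1$ by Clifford/Riemann--Roch, and for $n\ge 2$ fiber over $\PP^1$ via a general pencil in $|M|$, run the filtration of Theorem~\ref{thm: filtration}, and push the coefficient $\tfrac{1}{\delta(r)}$ through the induction using exactly the monotonicity $r_i \le r$ (from adjunction) together with the fact that $\delta$ is non-increasing. The error bookkeeping you anticipate is also carried out just as you sketch, with $\sum_i a_i \le \lambda+1$ coming from Proposition~\ref{prop: sum of ai}.

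The one point you do not isolate, and which the paper treats separately, is the degenerate case $L^n = 0$. Your plan to bound $\sum_i a_i$ via Proposition~\ref{prop: sum of ai} needs $(L_0|_F)^{n-1}=L^{n-1}M>0$, which the paper secures from $L^n>0$; when $L^n=0$ this can fail and the filtration bookkeeping stalls. The fix is easy and does not require the filtration at all: $L$ nef with $L^n=0$ is not big, so $h^0(V,L-M)=0$, hence $h^0(V,L)\le h^0(W,L|_W)$ for a general $W\in|M|$, and the induction hypothesis on $W$ (with the same $\lambda$) already gives $h^0(V,L)\le n\,\varepsilon(V,L,M)$.
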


\begin{proof}
	The proof is by induction, and we present it in several steps.
	
	Notice that the required inequality holds trivially if $h^0(V,\rounddown{L})=0$. We may make assumption $h^0(V, \rounddown{L})>0$ from now on.
	
	\textbf{Step 1}: Reduce to the case when $L$ is nef. 
	
	In fact, by replacing $V$ by an appropriate blowing up, we may assume that 
	$$
	L = L' + Z,
	$$
	where $L'$ is the movable part of $|\rounddown{L}|$ and $Z$ is its fixed part. It is clear that 
	$$
	r(L, M) \ge r(L', M), \quad \vol(L) \ge \vol(L'), \quad \varepsilon(V, L, M) \ge \varepsilon(V, L', M).
	$$ 
	Thus it suffices to prove Theorem \ref{thm: subcanonicity} for $L'$. 
	
	From now on, we assume that $L$ is a nef divisor.
	
	\textbf{Step 2}: The $n=1$ case.
	
	When $n=1$, Theorem \ref{thm: subcanonicity} is straightforward. If $h^1(V, L) \ne 0$, the classical Clifford inequality implies Theorem \ref{thm: subcanonicity}. Otherwise, by the Riemann-Roch theorem, 
	$$
	h^0(V, L) = \deg L - \frac{1}{2} \deg K_V = \left(1 - \frac{1}{2r}\right) \deg L.
	$$
	Thus the proof is completed.
	
	\textbf{Step 3}: The proof when $L^n > 0$.
	
	Now we assume that Theorem \ref{thm: subcanonicity} holds for dimension $k < n$. Choose a general pencil in $|M|$ and blow up the indeterminacies of this pencil, denoted by $\pi: V_0 \to V$. We get a fibration
	$$
	f: V_0 \to \PP^1
	$$
	such that the general fiber $F$ of $f$ is isomorphic to a general member of the chosen pencil. By the adjunction, $\kappa(F) \ge 0$. Write $M_0 = \pi^*M$ and $L_0 = \pi^*L$. It follows that
	$$
	r(L, M) = \frac{L_0M_0^{n-2}F}{(\pi^*K_V)M_0^{n-2}F} \ge r(L_0|_F, M_0|_F),
	$$
	where the last inequality follows from the adjunction. 
	
	Apply Theorem \ref{thm: filtration} to $f$ and $L_0$. Replacing $V_0$ by a further blowing up if necessary, we get triples
	$$
	(L_i, Z_i, a_i) \quad (i=0, \ldots, N)
	$$
	on $V_0$, and $L_i$ and $a_i$ satisfy the inequalities in Proposition \ref{prop: numerical inequality} and \ref{prop: sum of ai}. Note that by the definition of $r(L, M)$, we see that
	$$
	r(L_i|_F, M_0|_F) \le r(L_0|_F, M_0|_F) = r.
	$$
	By induction and using the fact that the function $\delta$ is non-increasing, we have
	$$
	h^0(F, L_i|_F)  \le \frac{1}{\delta(r)(n-1)!} (L_i|_F)^{n-1} + (n-1) \varepsilon(F, L_i|_F, M_0|_F).
	$$
	Combine this with Proposition \ref{prop: numerical inequality}. It follows that
	\begin{align*}
		 h^0(V_0, L_0) - \frac{1}{\delta(r)n!} L_0^n 
		\le  (n-1) \sum_{i=0}^{N} a_i\varepsilon(F, L_i|_F, M_0|_F) + \frac{1}{(n-1)!}(L_0|_F)^{n-1}.
	\end{align*}
	
	To estimate the right hand side of the above inequality, let $\lambda$ be the smallest integer such that $\lambda M-L$ is pseudo-effective. Note that $L^n > 0$.
	\begin{itemize}
		\item [(1)] It implies that $L^n \le \lambda L^{n-1}M = \lambda (L_0|_F)^{n-1}$. In particular, $(L_0|_F)^{n-1} > 0$. Thus by Proposition \ref{prop: sum of ai},
		$$
		\sum_{i=0}^{N} a_i \le \frac{L_0^n}{(L_0|_F)^n} + 1 \le \lambda + 1.
		$$
		
		\item [(2)] By Proposition \ref{prop: epsilon} (1),
		$$
		\varepsilon(F, L_i|_F, M_0|_F) \le \varepsilon(F, L_0|_F, M_0|_F).
		$$
		Moreover, since $\lambda M_0|_F - L_0|_F$ is also pseudo-effective, we have 
		$$
		\varepsilon(F, L_0|_F, M_0|_F) \le (\lambda+1)^{n-2}(M_0|_F)^{n-1} = (\lambda+1)^{n-2}M^n.
		$$
		
		\item [(3)] We have 
		$$
		(L_0|_F)^{n-1} = L^{n-1}M \le \lambda L^{n-2}M^2 \le \cdots \le \lambda^{n-1}M^n
		$$
	\end{itemize}
    Combining all above inequalities, it follows that 
    \begin{align*}
    	h^0(V_0, L_0) - \frac{1}{\delta(r)n!} L_0^n  & \le (n-1)(\lambda+1)^{n-1}M^n + \frac{1}{(n-1)!} \lambda^{n-1}M^n  \\
    	& \le n \varepsilon(V, L, M). 
    \end{align*}
    Thus the proof in this case is completed.
    
    \textbf{Step 4}. The proof when $L^n = 0$.
    
    In this case, the proof is easier. Since $L$ is not big, we know that 
    $$
    h^0(V, L-M) = 0.
    $$
    Take $W$ to be a general member in $|M|$, and we have 
    $$
    h^0(V, L) \le h^0(W, L|_W).
    $$
    Therefore, by induction, we deduce that
    $$
    h^0(V, L) \le \frac{1}{(n-1)!} (L|_W)^{n-1} + (n-1)\varepsilon(W, L|_W, M|_W).
    $$
    
    Let $\lambda$ be the smallest integer such that $\lambda M - L$ is pseudo-effective. Similar to Step 3, we have
    \begin{itemize}
    	\item [(1)] $(L|_W)^{n-1} = L^{n-1}M \le \lambda^{n-1}M^n$;
    	\item [(2)] $\varepsilon(W, L|_W, M|_W) \le (\lambda+1)^{n-2}M^n$.
    \end{itemize}
    
    Combining the above inequalities, it follows that
    $$
    h^0(V, L) \le \frac{1}{(n-1)!} \lambda^{n-1}M^n + (n-1) (\lambda+1)^{n-2}M^n \le n \varepsilon(V, L, M).
    $$
    Thus the whole proof is completed.
\end{proof}

\subsection{Sharper bound involving the mapping degree}

Let $V$ be a smooth variety of dimension $n \ge 2$, and let $L$ be a $\QQ$-divisor on $V$ such that $K_V-L$ is pseudo-effective. Instead of the subcanonicity, we suppose that
$$
a: V \to \Sigma
$$
is a generically finite morphism onto a (possibly singular) variety $\Sigma$. Let $H$ be a sufficiently ample divisor on $\Sigma$, and write $M = a^*H$. The assumption will be used till the end of this section.

\subsubsection{Preparation} \label{sharp bound preparation}

We first assume that $V$ is a surface and $|L|$ is base point free. Though this assumption looks simple, all results we need can be reduced to this setting.

\begin{lemma} \label{lem: L-M=0}
	If $h^0(V, L-M) = 0$, then 
	$$
	h^0(L) \le \frac{1}{2}LM + 1 \le \varepsilon(V, L, M).
	$$
\end{lemma}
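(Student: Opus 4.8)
The plan is to reduce the whole estimate to a Clifford-type bound on a general member of $|M|$. Since $H$ is sufficiently ample and $a$ is generically finite, $M = a^*H$ is nef and big with $M^2 \ge 1$, and $|M| \supseteq a^*|H|$ is base point free. As $M$ is big, the morphism $\phi_{|M|}$ has a two-dimensional image, so $|M|$ is not composed with a pencil; by Bertini's theorems (in characteristic zero) a general member $C \in |M|$ is therefore a smooth irreducible curve, to which both the adjunction formula and Clifford's inequality will apply.

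First I would restrict to such a $C$. From $C \sim M$ we obtain the exact sequence
$$
0 \to \CO_V(L - M) \to \CO_V(L) \to \CO_C(L|_C) \to 0,
$$
and since $h^0(V, L-M) = 0$ by hypothesis, taking global sections yields $h^0(V, L) \le h^0(C, L|_C)$. Here $\deg(L|_C) = L \cdot C = LM$, so it remains to bound $h^0(C, L|_C)$ by $\tfrac{1}{2} LM + 1$.

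Next I would split into two cases according to whether $L|_C$ is special. If $h^1(C, L|_C) \ne 0$, the classical Clifford inequality gives $h^0(C, L|_C) \le \tfrac{1}{2}\deg(L|_C) + 1 = \tfrac{1}{2} LM + 1$ directly. If instead $L|_C$ is non-special, Riemann--Roch gives $h^0(C, L|_C) = LM + 1 - g(C)$, and I would control $g(C)$ through adjunction: $2g(C) - 2 = (K_V + M)\cdot M = K_V M + M^2$. The decisive input is the standing hypothesis that $K_V - L$ is pseudo-effective together with the nefness of $M$, which gives $K_V M \ge LM$; combined with $M^2 > 0$ this yields $g(C) \ge 1 + \tfrac{1}{2} LM$, whence $h^0(C, L|_C) \le \tfrac{1}{2} LM$. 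In either case $h^0(V,L) \le \tfrac{1}{2} LM + 1$. For the remaining inequality $\tfrac{1}{2} LM + 1 \le \varepsilon(V, L, M) = (\lambda_M + 1) M^2$, I would use that by definition $\lambda_M M - L$ is pseudo-effective; pairing it with the nef divisor $M$ gives $LM \le \lambda_M M^2$, so that $\tfrac{1}{2} LM + 1 \le \tfrac{1}{2}\lambda_M M^2 + 1 \le (\lambda_M + 1) M^2$ since $M^2 \ge 1$.

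I expect the main obstacle to be the non-special case, for this is precisely where the hypothesis $K_V - L$ pseudo-effective must be exploited, and the argument relies on $C$ being genuinely integral so that Clifford's theorem and adjunction are both legitimate. Verifying that the general member of $|M|$ is irreducible (via the fact that $|M|$ is not composed with a pencil, which uses the bigness of $M$) is thus the delicate point that underpins the rest of the proof.
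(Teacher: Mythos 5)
Your proof is correct and follows essentially the same route as the paper's: restrict to a general smooth member of the free linear system in the class of $M$ (the paper takes $C \in a^*|H|$ rather than $|M|$, which makes no difference), bound $h^0(V,L)\le h^0(C,L|_C)$ via the hypothesis, apply Clifford in the special case and Riemann--Roch plus adjunction with $K_V-L$ pseudo-effective in the non-special case, and derive the second inequality from $\lambda_M M - L$ being pseudo-effective. You have merely written out in full the steps the paper compresses into "a combination of the Clifford inequality and the Riemann--Roch theorem again."
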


\begin{proof}
	Choose a general curve $C \in a^*|H|$. By Bertini's theorem, we may assume that $C$ is smooth. The assumption $h^0(V, L-M) = 0$ just tells us that $h^0(V, L) \le h^0(C, L|_C)$. Thus the first inequality is just a combination of the Clifford inequality and the Riemann-Roch theorem again.
	
	The second inequality is directly from the definition of $\varepsilon$. Actually, let $\lambda$ be the smallest integer such that $\lambda M - L$ is pseudo-effective. Then
	$$
	\frac{1}{2}LM + 1 \le \frac{\lambda}{2} M^2 + 1 \le (\lambda + 1) M^2 = \varepsilon(V, L, M).
	$$
	The proof is completed.
\end{proof}

Now suppose that $h^0(V, L-M) > 0$. Let 
$$
\gamma : = \max\{i \in \ZZ| h^0(V, L-iM) > 0 \}.
$$
Obviously, $\gamma \ge 1$. 

\begin{lemma} \label{lem: L^2 h^0(L) h^0(2L)}
	If $h^0(V, L-M) > 0$, then
	$$
	L^2 \ge h^0(V, 2L) - h^0(V, L) - 1.
	$$
\end{lemma}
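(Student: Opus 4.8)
The plan is to restrict to a general curve in $|L|$ and reduce the surface estimate to a Clifford-type inequality on that curve. Since $|L|$ is base point free, a general member $C \in |L|$ is a smooth irreducible curve by Bertini, and moreover $L$ is nef. Using $C \sim L$, so that $2L - C \sim L$, the standard short exact sequence
$$
0 \to \CO_V(L) \to \CO_V(2L) \to \CO_C(2L|_C) \to 0
$$
yields, upon taking global sections,
$$
h^0(V, 2L) - h^0(V, L) \le h^0(C, 2L|_C).
$$
Since $\deg(2L|_C) = 2(L \cdot C) = 2L^2$, it therefore suffices to prove the curve estimate $h^0(C, 2L|_C) \le L^2 + 1$.

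Next I would establish this curve estimate by a Clifford-type argument, distinguishing according to whether $2L|_C$ is special. If $h^0(C, 2L|_C) = 0$ the bound is trivial, as $L^2 \ge 0$ by nefness of $L$. If $2L|_C$ is special, the classical Clifford inequality gives directly $h^0(C, 2L|_C) \le \frac{1}{2}\deg(2L|_C) + 1 = L^2 + 1$. The only remaining case is when $2L|_C$ is non-special, where Riemann--Roch gives $h^0(C, 2L|_C) = 2L^2 - g + 1$ with $g = g(C)$.

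For the non-special case I would invoke the standing assumption that $K_V - L$ is pseudo-effective. By adjunction, $2g - 2 = (K_V + L)\cdot L = K_V \cdot L + L^2$, so $g = 1 + \tfrac{1}{2}(K_V \cdot L + L^2)$. Since $L$ is nef and $K_V - L$ is pseudo-effective, we have $L \cdot (K_V - L) \ge 0$, i.e.\ $L^2 \le K_V \cdot L$, which forces $L^2 \le g$ and hence $2L^2 - g + 1 \le L^2 + 1$. Combining the three cases proves $h^0(C, 2L|_C) \le L^2 + 1$, which together with the exact-sequence bound gives the lemma.

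The main obstacle, and the point where the geometric hypotheses genuinely enter, is the non-special case: a priori Riemann--Roch only yields $h^0(C, 2L|_C) = 2L^2 - g + 1$, which exceeds $L^2 + 1$ unless one knows $L^2 \le g$. The key observation is that this numerical inequality is, via adjunction, exactly equivalent to $L \cdot (K_V - L) \ge 0$, which is furnished by the nefness of $L$ together with the pseudo-effectivity of $K_V - L$. (The hypothesis $h^0(V, L - M) > 0$ merely places us in the regime complementary to Lemma \ref{lem: L-M=0} and is not otherwise needed for this numerical bound.)
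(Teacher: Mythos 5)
Your proof is correct and follows essentially the same route as the paper's: restrict to a general smooth irreducible member $C \in |L|$, use the resulting section sequence to bound $h^0(V,2L)-h^0(V,L)$ by $h^0(C, 2L|_C)$, and apply Clifford or Riemann--Roch to $2L|_C$, with the pseudo-effectivity of $K_V - L$ (together with nefness of $L$) supplying exactly the degree bound $\deg(2L|_C) \le \deg K_C$ that makes the non-special case work. One small correction to your closing parenthetical: the hypothesis $h^0(V, L-M) > 0$ is not entirely dispensable here, since it forces $L$ to be big, and that is what guarantees via Bertini that a general member of the base point free system $|L|$ is irreducible rather than merely smooth.
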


\begin{proof}
	Take a general member $D \in |L|$. By assumption, $D$ is big. Thus we may assume that $D$ is smooth and irreducible. Consider the following exact sequence
	$$
	0 \to H^0(V, L) \to H^0(V, 2L) \to H^0(D, 2L|_D).
	$$
	Since $K_V - L$ is pseudo-effective, we know that $\deg(2L|_D) \le \deg (K_V|_D + L|_D) = \deg K_D$, i.e., $K_D - 2L|_D$ is pseudo-effective. Apply the Clifford inequality (when $h^1(D, 2L|_D) > 0$) or the Riemann-Roch theorem (when $h^1(D, 2L|_D) = 0$) for $2L|_D$, and it follows that
	$$
	L^2 = \frac{1}{2} \deg(2L|_D) \ge h^0(D, 2L|_D) - 1 \ge h^0(V, 2L) - h^0(V, L) - 1.
	$$
	The proof is completed.
\end{proof}

Let $C \in a^*|H|$ be a general member, hence smooth. Consider the following two restriction maps
$$
\mathrm{res}_{1, i}: H^0(V, L-iM) \to H^0(C, L|_C - iM|_C)
$$
and
$$
\mathrm{res}_{2, j}: H^0(V, 2L-jM) \to H^0(C, 2L|_C - jM|_C).
$$
The kernels of the above two maps are just $H^0(V, L-(i+1)M)$ and $H^0(V, 2L-(j+1)M)$, respectively. 

Let $V_{1, i}$ (resp. $V_{2, j}$) denote the image of $H^0(V, L-iM)$ (resp.  $H^0(V, 2L-jM)$) under $\mathrm{res}_{1, i}$ (resp. $\mathrm{res}_{2, j}$). 
\begin{lemma} \label{lem: h^0(L) h^0(2L)}
	We have 
	\begin{align*}
		h^0(V, L) & = \sum_{i=0}^{\gamma-1} \dim V_{1, i} + h^0(V, L-\gamma M) \\
		h^0(V, 2L) & = \sum_{j=0}^{2\gamma-1} \dim V_{2, j} + h^0(V, 2L-2\gamma M) \ge 2\sum_{i=0}^{\gamma-1} \dim V_{2, 2i} - \dim V_{2, 0}.
	\end{align*}
\end{lemma}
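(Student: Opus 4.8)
The plan is to derive both equalities from rank--nullity applied to the given restriction maps, together with a telescoping sum, and then to reduce the final inequality to a single monotonicity statement for the dimensions $\dim V_{2,j}$.

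First I would record the two equalities, which are purely formal. Since the kernel of $\mathrm{res}_{1,i}$ is $H^0(V, L-(i+1)M)$, rank--nullity gives $\dim V_{1,i} = h^0(V, L-iM) - h^0(V, L-(i+1)M)$, and summing over $i = 0, \ldots, \gamma-1$ telescopes to $h^0(V,L) - h^0(V, L-\gamma M)$, which is exactly the first displayed equality. The same argument applied to $\mathrm{res}_{2,j}$, whose kernel is $H^0(V, 2L-(j+1)M)$, yields $\dim V_{2,j} = h^0(V, 2L-jM) - h^0(V, 2L-(j+1)M)$, and summing over $j=0, \ldots, 2\gamma-1$ telescopes to the stated equality for $h^0(V,2L)$.

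The content lies in the inequality, which I claim follows from the monotonicity
$$
\dim V_{2,j} \ge \dim V_{2,j+1} \qquad \text{for all } j \ge 0.
$$
Granting this, I would group the sum in consecutive pairs, $\sum_{j=0}^{2\gamma-1}\dim V_{2,j} = \sum_{i=0}^{\gamma-1}\bigl(\dim V_{2,2i} + \dim V_{2,2i+1}\bigr)$. After subtracting $\sum_{i=0}^{\gamma-1}\dim V_{2,2i}$ from both sides, the desired inequality becomes $\sum_{i=0}^{\gamma-1}\dim V_{2,2i+1} \ge \sum_{i=1}^{\gamma-1}\dim V_{2,2i}$, and this follows from the monotonicity term by term, since $\dim V_{2,2i+1} \ge \dim V_{2,2i+2}$ for $i=0,\ldots,\gamma-2$, with the extra summand $\dim V_{2,2\gamma-1}\ge 0$ (and the discarded term $h^0(V, 2L-2\gamma M)\ge 0$) only helping.

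To prove the monotonicity I would multiply by a section of $M$. Choose a second general member $D \in a^*|H|$, defined by some $s \in H^0(V, M)$, with $D$ not containing $C$, so that $t := s|_C$ is a nonzero element of $H^0(C, M|_C)$; such a $D$ exists because $|H|$ is base point free for $H$ sufficiently ample and $C$ is a general member of $a^*|H|$. Multiplication by $s$ carries $H^0(V, 2L-(j+1)M)$ into $H^0(V, 2L-jM)$ and is compatible with restriction to $C$, since $(s\sigma)|_C = t\cdot(\sigma|_C)$; hence multiplication by $t$ sends $V_{2,j+1}$ into $V_{2,j}$. As $C$ is integral and $t \ne 0$, this multiplication map is injective, giving $\dim V_{2,j+1} \le \dim V_{2,j}$. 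The one step that is not mere bookkeeping---hence the main obstacle---is precisely this monotonicity: one must produce a section of $M$ restricting nontrivially to $C$ and verify its compatibility with the maps $\mathrm{res}_{2,j}$ defining the spaces $V_{2,j}$.
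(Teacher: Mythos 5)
Your proof is correct and follows essentially the same route as the paper's: telescoping the rank--nullity identities for the restriction maps to get the two equalities, and then deducing the inequality from the monotonicity $\dim V_{2,j}\ge \dim V_{2,j+1}$ together with $h^0(V,2L-2\gamma M)\ge 0$. The paper merely asserts this monotonicity, whereas you supply the (correct) standard justification via multiplication by a section of $M$ not vanishing on $C$; this is a welcome filled-in detail rather than a different approach.
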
 

\begin{proof}
	The two equalities are obvious. The last inequality in the second formula holds simply because $h^0(V, 2L-2\gamma M) > 0$ and $\dim V_{2, 2i-1} \ge \dim V_{2, 2i}$ for any $1 \le i \le \gamma-1$.
\end{proof}

Let $|N_i|$ denote the movable part of $|L-iM|$. Note that the base locus of $|N_i|$ is either empty or of dimension zero. We deduce that $N_i$ is nef. Also, we have 
$$
\dim V_{1, i} = \dim |N_i||_C+1.
$$

\begin{lemma} \label{lem: V_2i V_1i}
	For $0 \le i \le \gamma$, we have
	$$
	\dim V_{2, 2i} \ge 2 \dim V_{1, i} - 1.
	$$
	If moreover, the linear system $|N_i||_C$ induces a birational map on $C$, then
	$$
	\dim V_{2, 2i} \ge 3 (\dim V_{1, i} - 1).
	$$
\end{lemma}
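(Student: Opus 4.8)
The plan is to reduce both inequalities to a count of the conditions that a general divisor in the series imposes on quadrics. The starting observation is that each element of $V_{1,i}$ is the restriction to $C$ of a section of $L-iM$, so a product of two of them is the restriction of a section of $2L-2iM$; hence the products $V_{1,i}\cdot V_{1,i}$ land inside $V_{2,2i}$, and it is enough to bound $\dim(V_{1,i}\cdot V_{1,i})$ from below. Since the fixed part of $|L-iM|$ contributes only an injective factor after restricting to a general $C$, I may replace $V_{1,i}$ by the base-point-free series $W=|N_i||_C$ of projective dimension $r=\dim V_{1,i}-1$, which induces a nondegenerate morphism $\phi\colon C\to\PP^r$.

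Next I would pass to a finite set of points. Choosing a general section $s_0\in W$, its divisor $Z_0=\mathrm{div}_C(s_0)$ is reduced of degree $d=N_i\cdot M$, and evaluation of sections of $2N_i|_C$ at $Z_0$ gives a map $\rho$ to $\CC^{d}$ whose kernel contains $s_0W$. This yields
$$\dim(W\cdot W)\ \ge\ \dim(s_0W)+\dim\rho(W\cdot W)\ =\ (r+1)+\dim\big(\overline{W}\cdot\overline{W}\big),$$
where $\overline{W}=\rho(W)$ is the $r$-dimensional space of linear forms on the hyperplane $\{s_0=0\}\cong\PP^{r-1}$ evaluated at the $d$ points $\phi(Z_0)$, and $\overline{W}\cdot\overline{W}$ is the image of the quadrics under the same evaluation. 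The first inequality is then immediate: multiplying $\overline{W}$ by the restriction of a section vanishing at none of the $d$ points shows $\dim(\overline{W}\cdot\overline{W})\ge r$, so $\dim(W\cdot W)\ge 2r+1=2\dim V_{1,i}-1$.

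For the second inequality I would bring in birationality through general position. When $\phi$ is birational onto its image, the General Position Theorem (available in characteristic zero) places the $d$ points $\phi(Z_0)$ in linear general position in $\PP^{r-1}$, and Castelnuovo's lemma then shows that such points impose $\min(d,2r-1)$ independent conditions on quadrics, i.e. $\dim(\overline{W}\cdot\overline{W})\ge\min(d,2r-1)$. Combined with the display above this gives $\dim(W\cdot W)\ge 3r=3(\dim V_{1,i}-1)$ as soon as $d\ge 2r-1$. The degree bound is supplied by the subcanonical hypothesis: because $K_V-L$ is pseudo-effective, the restricted series $N_i|_C$ is special, and Clifford's theorem yields $d=\deg(N_i|_C)\ge 2r$.

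I expect the second inequality to be the main obstacle, and within it two points deserve care. The first is the use of birationality: it is precisely what makes the General Position Theorem applicable, since for a non-birational $\phi$ the points $\phi(Z_0)$ collapse along the fibres and fail to be in general position, which is exactly why only the weaker bound $2\dim V_{1,i}-1$ survives in that case. The second is the degree estimate $d\ge 2r-1$, which is false for low-genus model curves such as rational or elliptic normal curves and therefore cannot be read off from $\phi$ alone; it must be extracted from the pseudo-effectivity of $K_V-L$ via the speciality of $N_i|_C$ and Clifford's theorem, and making this positivity step rigorous is the delicate part of the argument.
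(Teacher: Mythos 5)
Your argument is correct, but it is worth noting that the paper does not actually prove this lemma: it simply cites \cite[Lemma 5.3]{Barja_Pardini_Stoppino} for $k=2$, so what you have written is in effect a self-contained proof of the cited result, and it follows the classical route (Hopf-type bound $\dim(W\cdot W)\ge 2\dim W-1$ for the first inequality; General Position Theorem plus Castelnuovo's lemma on conditions imposed on quadrics for the second), which is the same mechanism underlying the Barja--Pardini--Stoppino lemma. Two small points deserve tightening. First, your reduction from $V_{1,i}$ to the base-point-free series $W=|N_i||_C$ is legitimate, but the reason is that $V_{1,i}=(s_{Z_i}|_C)\cdot W'$ and $V_{1,i}\cdot V_{1,i}=(s_{Z_i}|_C)^2\cdot(W'\cdot W')$ with multiplication by $s_{Z_i}|_C$ injective on a general $C$ (which avoids the zero-dimensional base locus of $|N_i|$); this should be said explicitly since $V_{2,2i}$ is defined as the image of $H^0(V,2L-2iM)$, not of $\mathrm{Sym}^2 V_{1,i}$. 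Second, ``$N_i|_C$ is special'' is not quite what pseudo-effectivity of $K_V-L$ gives you: it gives $\deg(K_C-N_i|_C)=(K_V-L)\cdot C+C^2+(L-N_i)\cdot C\ge 0$, i.e.\ $d\le \deg K_C$, and a divisor of degree at most $2g(C)-2$ need not be special. The bound $d\ge 2r$ you need nevertheless holds by the same dichotomy the paper uses elsewhere (e.g.\ in Lemma \ref{lem: L-M=0}): Clifford when $h^1(C,N_i|_C)>0$, and Riemann--Roch combined with $d\le 2g(C)-2$ when $h^1(C,N_i|_C)=0$. With these two clarifications the proof is complete and matches the content of the reference the paper invokes.
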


\begin{proof}
	This is just \cite[Lemma 5.3]{Barja_Pardini_Stoppino} for $k=2$.
\end{proof}

In the following, we will apply the above results to deduce more inequalities subject to the degree of the map $a$. The notation here will be frequently used in the sequel.

\subsubsection{$\deg a = 1$} We first consider the case when $a$ is birational.

\begin{theorem} \label{thm: deg 1}
	Suppose that $\deg a = 1$ and that $K_V - L$ is pseudo-effective. Then we have
	$$
	h^0(V, \rounddown{L}) \le \frac{2}{5 n!} \vol(L) + n \varepsilon(V, L, M).
	$$
\end{theorem}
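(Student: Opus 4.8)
The plan is to argue by induction on $n = \dim V$, with the surface case $n = 2$ as the base and the reduction of higher dimensions to it carried out exactly as in the proof of Theorem~\ref{thm: subcanonicity}. As a preliminary normalization I would reduce to nef $L$: after a suitable blow-up, replacing $L$ by the movable part $L'$ of $\rounddown{L}$ leaves $h^0(V, \rounddown{L}) = h^0(V, L')$ unchanged while only decreasing $\vol$ and $\varepsilon(\,\cdot\,, M)$ (Proposition~\ref{prop: epsilon}) and preserving the pseudo-effectivity of $K_V - L'$ since $L' \le L$; the behaviour of $\varepsilon$ under blow-up is Proposition~\ref{prop: epsilon}(2). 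So I may assume $L$ is nef, and in the base case that $|L|$ is base point free, matching the standing hypotheses of \S\ref{sharp bound preparation}.

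For the inductive step $n \ge 3$ I would copy Steps 3 and 4 of Theorem~\ref{thm: subcanonicity} essentially verbatim, only carrying the constant $\tfrac{2}{5}$ in place of $\tfrac{1}{\delta(r)}$. Blowing up a general pencil in $|M|$ gives a fibration $f \colon V_0 \to \PP^1$ whose general fibre $F$ is a general member of $|M|$; since $\deg a = 1$, the restriction $a|_F$ is again birational onto its image, so the inductive hypothesis applies to $(F, L_0|_F, M_0|_F)$ with the same constant $\tfrac{2}{5}$. Applying the filtration of Theorem~\ref{thm: filtration} to $L_0 = \pi^* L$, substituting the fibrewise bounds into Proposition~\ref{prop: numerical inequality}, and controlling $\sum_i a_i$ and the $\varepsilon$-terms through Proposition~\ref{prop: sum of ai} and the integer $\lambda_M$ as in Step 3 (with the non-big case handled as in Step 4) then yields $h^0(V_0, L_0) \le \tfrac{2}{5\,n!} L_0^n + n\,\varepsilon(V, L, M)$. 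This part is mechanical once the base case is settled.

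The heart of the matter is the base case $n = 2$, which is where the coefficient improves from the $\tfrac12$ of Theorem~\ref{thm: relative noether} to $\tfrac25$. If $h^0(V, L - M) = 0$, Lemma~\ref{lem: L-M=0} gives $h^0(V, L) \le \varepsilon(V, L, M)$ and we are done. Otherwise set $\gamma = \max\{i : h^0(V, L - iM) > 0\} \ge 1$ and $v_i = \dim V_{1,i}$. The crucial point, where $\deg a = 1$ genuinely enters, is that for each $0 \le i \le \gamma - 1$ one has $h^0(V, L - (i+1)M) > 0$, so $|L - iM|$ contains a translate of $|M|$; since $a$ is birational the restricted system $|M|\,|_C = (a|_C)^*|H|$ is birational on a general $C \in a^*|H|$, and hence so is $|N_i|\,|_C$. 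This licenses the stronger estimate $\dim V_{2, 2i} \ge 3(v_i - 1)$ of Lemma~\ref{lem: V_2i V_1i} (the factor $3$, rather than the general $2$, is exactly what produces $\tfrac25$). Combining this with the telescoping identity $h^0(V, L) = \sum_{i=0}^{\gamma} v_i$ and the lower bound for $h^0(V, 2L)$ of Lemma~\ref{lem: h^0(L) h^0(2L)}, and feeding the result into $h^0(V, 2L) \le L^2 + h^0(V, L) + 1$ from Lemma~\ref{lem: L^2 h^0(L) h^0(2L)}, would give an inequality of the shape $h^0(V, L) \le \tfrac15 L^2 + \tfrac35 v_0 + \tfrac65 v_\gamma + \tfrac65 \gamma - \tfrac25$.

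The remaining and genuinely delicate step, which I expect to be the main obstacle, is to absorb the error terms $v_0$, $v_\gamma$ and $\gamma$ into $2\varepsilon(V, L, M) = 2(\lambda_M + 1) M^2$. I would bound $v_0 \le \tfrac12 LM + 1 \le \tfrac12 \lambda_M M^2 + 1$ and $v_\gamma = h^0(V, L - \gamma M) \le \tfrac12(\lambda_M - \gamma)M^2 + 1$ by Clifford on a general curve $C \in a^*|H|$, the required pseudo-effectivity of $K_C - L|_C$ and $K_C - (L - \gamma M)|_C$ coming from $K_V - L$ pseudo-effective together with the effectivity of $M$, and then use $\gamma \le \lambda_M$ and $M^2 \ge 1$. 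The point is that the $M^2$-coefficients partially cancel, $\tfrac{3}{10}\lambda_M + \tfrac35(\lambda_M - \gamma) + \tfrac65\gamma \le \tfrac32 \lambda_M$, leaving the whole error term below $2\lambda_M M^2 + 2M^2$. I expect the care to go into this cancellation, into the boundary term $\dim V_{2,0}$ (where only $2\dim V_{2,0} - \dim V_{2,0} = \dim V_{2,0} \ge 3(v_0 - 1)$ is available, so the $i=0$ summand carries coefficient $3$ rather than $6$), and into verifying that these bounds are uniform enough to survive the dimension induction.
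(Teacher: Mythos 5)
Your proposal is correct and follows essentially the same route as the paper: the same reduction to the nef, base-point-free surface case via the induction of Theorem \ref{thm: subcanonicity}, and the same surface argument combining Lemmas \ref{lem: L-M=0}, \ref{lem: L^2 h^0(L) h^0(2L)}, \ref{lem: h^0(L) h^0(2L)} and the factor-$3$ estimate of Lemma \ref{lem: V_2i V_1i} enabled by birationality, with your slightly different bookkeeping of the boundary term $\dim V_{2,0}$ (folded in via $\dim V_{2,0}\ge 3(v_0-1)$ rather than bounded above by $2LM+1$) leading to the same conclusion. The only adjustment worth making is to take the general pencil inside $a^*|H|\subseteq |M|$ rather than in all of $|M|$, as the paper does, so that the general fibre sits squarely in the setting of \S\ref{sharp bound preparation}; for $\deg a=1$ your choice is harmless, but the paper's is the one that works uniformly for Theorems \ref{thm: deg 3} and \ref{thm: deg 2} as well.
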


Similar to the proof of Theorem \ref{thm: subcanonicity}, we may assume that $L$ is nef. Actually, we may even assume that $|L|$ is base point free. Moreover, we only need to prove Theorem \ref{thm: deg 1} when $n = 2$ (i.e., Lemma \ref{lem: deg 1 surface}), and the general result follows by an inductive argument almost identical to Step 3 and Step 4 in the proof of Theorem \ref{thm: subcanonicity}.

One little difference is that, instead of choosing a general pencil in $|M|$ as in Step 3 of the proof of Theorem \ref{thm: subcanonicity}, here we choose a general pencil in the sub linear system $a^*|H| \subseteq |M|$. Since $a^*|H|$ is also base point free, the smoothness of a general member in it is guaranteed by Bertini's theorem. This adjustment will be used till the end of this section. Note that the restriction of $a$ on a general member of $a^*|H|$ has degree one. This is the key point for us to use the induction.

With this adjustment and by Lemma \ref{lem: L-M=0}, we eventually reduce Theorem \ref{thm: deg 1} to the following lemma.

\begin{lemma} \label{lem: deg 1 surface}
	Theorem \ref{thm: deg 1} holds when $n=2$, $|L|$ is base point free and $h^0(V, L-M) > 0$.
\end{lemma}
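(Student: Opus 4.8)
The plan is to run a Clifford--Noether type comparison between $h^0(V,L)$ and $h^0(V,2L)$ along the $M$-adic filtration of Section~\ref{sharp bound preparation}, restricted to a general (hence smooth) curve $C \in a^*|H|$, and then to squeeze $h^0(V,L)$ by means of Lemma~\ref{lem: L^2 h^0(L) h^0(2L)}. The decisive input of the hypothesis $\deg a = 1$ is that it upgrades the generic Clifford estimate $\dim V_{2, 2i} \ge 2\dim V_{1,i}-1$ to the birational one $\dim V_{2, 2i} \ge 3(\dim V_{1,i}-1)$ of Lemma~\ref{lem: V_2i V_1i}; propagating the resulting gain through the comparison is exactly what turns the Clifford coefficient $\tfrac12$ into $\tfrac25$. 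Recall that we are reduced to $L$ nef with $|L|$ base point free and $h^0(V, L-M)>0$, so $\gamma \ge 1$, and the target inequality reads $h^0(V,L) \le \tfrac15 L^2 + 2\varepsilon(V,L,M)$.

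The heart of the argument is to verify, for every $0 \le i \le \gamma - 1$, that the restricted movable system $|N_i|\,|_C$ (whose projective dimension is $\dim V_{1,i}-1$) induces a birational map on $C$. I would deduce this from the pointwise estimate $N_i \ge M$. Indeed, since $i+1 \le \gamma$ we have $h^0(V, L-(i+1)M) > 0$, so there is an effective divisor $D \in |L-(i+1)M|$; writing $L-iM \sim M + D$ and using that $|M| = |a^*H|$ is base point free, the subsystem $\{\, D + M' : M' \in |M| \,\}$ of $|L-iM|$ already has movable part $M$, whence $N_i \ge M$. Consequently $|N_i|\,|_C \supseteq |M|\,|_C$, and the latter is the restriction of the map attached to $a^*|H|$; as $a|_C$ has degree one this restriction is birational onto its image, and any linear system containing a birational one is itself birational. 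This is the step I expect to be the main obstacle, since it is precisely where the global hypothesis $\deg a = 1$ has to be converted into a statement valid on the general curve $C$.

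With birationality in hand I would set $b := h^0(V, L-\gamma M)$ and combine the two displays of Lemma~\ref{lem: h^0(L) h^0(2L)} with $\dim V_{2,2i} \ge 3(\dim V_{1,i}-1)$ to get
$$
h^0(V,2L) \ge 6\sum_{i=0}^{\gamma-1}\bigl(\dim V_{1,i}-1\bigr) - \dim V_{2,0} = 6\bigl(h^0(V,L) - b - \gamma\bigr) - \dim V_{2,0}.
$$
Feeding this into $h^0(V,2L) \le L^2 + h^0(V,L) + 1$ from Lemma~\ref{lem: L^2 h^0(L) h^0(2L)} yields
$$
5\,h^0(V,L) \le L^2 + 6\gamma + 6b + \dim V_{2,0} + 1,
$$
so it remains to absorb the error term $6\gamma + 6b + \dim V_{2,0} + 1$ into $10\,\varepsilon(V,L,M) = 10(\lambda+1)M^2$, where $\lambda$ is the smallest integer with $\lambda M - L$ pseudo-effective. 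For this I would use $\gamma \le \lambda$ (because $L-\gamma M$ is effective while $M$ is big), the bound $b \le \varepsilon(V, L-\gamma M, M) \le \varepsilon(V,L,M)$ from Lemma~\ref{lem: L-M=0} applied to $L-\gamma M$ (whose twist by $-M$ has no sections by maximality of $\gamma$) together with Proposition~\ref{prop: epsilon}(1), and a Clifford/Riemann--Roch estimate for $\dim V_{2,0} \le h^0(C, 2L|_C)$ on the curve $C$. These estimates are routine once one observes that $M^2 = H^2$ is as large as desired since $H$ is sufficiently ample, which comfortably dominates the terms linear in $\lambda$; the only point demanding a little care is to separate the special and non-special cases of $2L|_C$ when bounding $\dim V_{2,0}$.
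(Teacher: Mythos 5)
Your argument is correct and follows essentially the same route as the paper's proof: the same observation that $|M|$ embeds into $|L-iM|$ for $i\le\gamma-1$ (hence $V_{1,i}$ induces a birational map on $C$ because $\deg a=1$), the same use of the second inequality of Lemma \ref{lem: V_2i V_1i} together with Lemma \ref{lem: h^0(L) h^0(2L)}, and the same elimination of $h^0(V,2L)$ via Lemma \ref{lem: L^2 h^0(L) h^0(2L)}. The only difference is in absorbing the error term $6\gamma+6b+\dim V_{2,0}+1$: the paper exploits the cancellation $h^0(V,L-\gamma M)+\gamma\le LM+1$ coming from the $-\gamma M^2$ in the Clifford bound, whereas you bound $\gamma$ and $b$ separately and lean on $M^2=H^2$ being large, which is legitimate under the standing ``$H$ sufficiently ample'' hypothesis and still closes the estimate.
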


\begin{proof}
	We claim that
	\begin{equation} \label{eq: deg 1 claim}
		h^0(V, 2L) - 6h^0(V, L) \ge -8LM - 7. 
	\end{equation}
	Suppose the claim holds. Together with Lemma \ref{lem: L^2 h^0(L) h^0(2L)}, we deduce that
	$$
	h^0(V, L) \le \frac{1}{5} L^2 + \frac{8}{5}(LM + 1),
	$$
	and the proof will be completed just noting that 
	$$
	 \frac{8}{5}(LM + 1) < 2\varepsilon(V, L, M)
	$$
	just as in the proof of Lemma \ref{lem: L-M=0}.
	
	To prove the claim, let $C$, $\gamma$, $V_{1, i}$, $V_{2, j}$ be the same as in \S \ref{sharp bound preparation}. For $0 \le i \le \gamma-1$, $|M|$ is a sub linear system of $|L-iM|$, which means that $|M||_C$ is a sub linear system of $V_{1, i}$. Note that $|M||_C$ induces a birational map from $C$. We deduce that the map induced by $V_{1, i}$ $(0 \le i \le \gamma-1)$ is birational. Thus it follows from Lemma \ref{lem: h^0(L) h^0(2L)} and the second inequality in Lemma \ref{lem: V_2i V_1i} that
	$$
	h^0(V, 2L) - 6h^0(V, L) \ge - 6 \left(\gamma + h^0(V, L-\gamma M)\right) - \dim V_{2,0}.
	$$
	Let us estimate the right hand side of the above inequality. 
	\begin{itemize}
		\item [(1)] Since $h^0(V, L-(\gamma+1)M) = 0$, by Lemma \ref{lem: L-M=0}, we have
		$$
		h^0(V, L-\gamma M) \le \frac{1}{2}(LM - \gamma M^2) + 1 \le LM - \gamma M^2 + 1 
		$$ 
		In particular, 
		$$
		h^0(V, L-\gamma M) + \gamma \le LM + 1
		$$
		\item [(2)] Note that $\dim V_{2, 0} \le h^0(C, 2L|_C)$. By the Clifford inequality and the Riemann-Roch theorem similar as before, we simply deduce that
		$$
		\dim V_{2, 0} \le h^0(C, 2L|_C) \le \deg(2L|_C) + 1 = 2LM + 1.
		$$
	\end{itemize}
    Combining the above two inequalities together, we prove the claim.
\end{proof}

\subsubsection{$a$ is not composed with an involution}
Second, we consider the case when $a$ is not composed with an involution. That is, there is no generically finite map $V \dashrightarrow V'$ of degree two through which $a$ factors birationally.

\begin{theorem} \label{thm: deg 3}
	Suppose that $a$ is not composed with an involution and that $K_V - L$ is pseudo-effective. Then we have
	$$
	h^0(V, \rounddown{L}) \le \frac{4}{9 n!} \vol(L) + n \varepsilon(V, L, M).
	$$
\end{theorem}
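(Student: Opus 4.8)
The plan is to run the proof of Theorem~\ref{thm: deg 1} essentially verbatim, replacing the birational Clifford estimate by its analogue for maps not composed with an involution. First I would reduce, exactly as in the passage preceding Lemma~\ref{lem: deg 1 surface}, to the case where $L$ is nef and $|L|$ is base point free, and then reduce to $n=2$ by the inductive argument of Step~3 and Step~4 in the proof of Theorem~\ref{thm: subcanonicity}, taking a general pencil inside the subsystem $a^*|H| \subseteq |M|$. The only additional point in the induction is that the hypothesis on $a$ must be inherited by the restriction to a general member $W \in a^*|H|$: if $a|_W$ acquired a degree-two factorization for general $W$, these factorizations would glue across the family to a degree-two factorization of $a$, contradicting the assumption. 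Granting this Bertini/monodromy-type descent, applied at each drop in dimension, the restriction of $a$ to a general curve $C$ in the bottom stratum remains not composed with an involution.

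It then suffices to treat the surface case with $|L|$ base point free. When $h^0(V, L-M)=0$, Lemma~\ref{lem: L-M=0} already gives more than is needed, so I would assume $h^0(V, L-M)>0$ and set up $C$, $\gamma$, $V_{1,i}$, $V_{2,j}$ as in \S\ref{sharp bound preparation}. Mirroring Lemma~\ref{lem: deg 1 surface}, the target is the claim
\begin{equation*}
	2h^0(V, 2L) - 11 h^0(V, L) \ge -15 LM - 13.
\end{equation*}
Combined with Lemma~\ref{lem: L^2 h^0(L) h^0(2L)} in the form $h^0(V,2L) \le L^2 + h^0(V,L) + 1$, this gives
\begin{equation*}
	h^0(V, L) \le \frac{2}{9} L^2 + \frac{5}{3}(LM+1) \le \frac{2}{9}L^2 + 2\varepsilon(V, L, M),
\end{equation*}
the last inequality following, as in the proof of Lemma~\ref{lem: L-M=0}, from $\varepsilon(V,L,M) \ge LM+1$. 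Since $\frac{2}{9}L^2 = \frac{4}{9\cdot 2!}\vol(L)$, this is exactly Theorem~\ref{thm: deg 3} for $n=2$, and the inductive step upgrades it to all $n$.

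The crux, which I expect to be the main obstacle, is the intermediate Clifford-type bound replacing the birational estimate of Lemma~\ref{lem: V_2i V_1i}. For each $0 \le i \le \gamma-1$ the system $V_{1,i}$ contains $a^*|H||_C$, so it induces a map refining $a|_C$; since a refinement of a map not composed with an involution is again not composed with an involution, $V_{1,i}$ induces such a map, and what I need is
\begin{equation*}
	\dim V_{2,2i} \ge \frac{11}{4}\bigl(\dim V_{1,i} - 1\bigr).
\end{equation*}
Feeding this into Lemma~\ref{lem: h^0(L) h^0(2L)} together with $h^0(V, L-\gamma M)+\gamma \le LM+1$ and $\dim V_{2,0} \le 2LM+1$ yields the claim above, just as $\dim V_{2,2i} \ge 3(\dim V_{1,i}-1)$ yielded \eqref{eq: deg 1 claim}. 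The difficulty is that the coefficient $\frac{11}{4}$ lies strictly between the general bound $2$ and the birational bound $3$, and a naive split into ``$V_{1,i}$ birational'' versus ``$\deg \ge 3$'' fails: a degree-$\ge 3$ map only guarantees the Castelnuovo-type bound $\dim V_{2,2i} \ge 2\dim V_{1,i}-1$, which is weaker than $\frac{11}{4}(\dim V_{1,i}-1)$ as soon as $\dim V_{1,i} \ge 3$. Extracting the coefficient $\frac{11}{4}$ therefore requires exploiting the ``not composed with an involution'' hypothesis through the general-position analysis of the image curve underlying \cite[Lemma 5.3]{Barja_Pardini_Stoppino}, and this is where the real work lies.
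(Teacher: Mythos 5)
Your reduction to the surface case and the overall bookkeeping match the paper's, but the argument has a genuine gap at exactly the point you flag: the intermediate bound $\dim V_{2,2i} \ge \frac{11}{4}\bigl(\dim V_{1,i}-1\bigr)$ is never established, and the paper neither proves nor needs any such uniform interpolation of Lemma \ref{lem: V_2i V_1i}. As you yourself observe, the hypothesis that $a$ is not composed with an involution only upgrades the degree of the map induced by $V_{1,i}$ from $2$ to $\ge 3$ when that map fails to be birational, and a degree-$\ge 3$ map by itself gives nothing beyond the general bound $\dim V_{2,2i}\ge 2\dim V_{1,i}-1$. So the crux of your proof is an unproven (and doubtful) lemma, and the rest of the computation, while arithmetically consistent with it, cannot be completed along this route.

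The paper circumvents this by a two-inequality cancellation rather than a single improved Clifford-type estimate. Let $i_0$ be the first index for which $V_{1,i}$ does not induce a birational map on $C$. Applying the birational bound $3(\dim V_{1,i}-1)$ for $i<i_0$ and the general bound $2\dim V_{1,i}-1$ for $i\ge i_0$ yields \eqref{eq: deg 3 1}, which carries the unknown deficit $-2\sum_{i=i_0}^{\gamma-1}\dim V_{1,i}$. A second, independent lower bound on $L^2$ is then extracted from the numerical filtration $\{N_i\}$ restricted to $C$: here the hypothesis enters through $\deg\phi_i\ge 3$ for $i\ge i_0$ (degree two being excluded, since $a$ would otherwise factor through an involution), which sharpens $\dim V_{1,i}\le \frac{1}{2}N_iM+1$ to $\dim V_{1,i}\le \frac{1}{3}N_iM+1\le\frac{1}{6}(N_{i-1}^2-N_i^2)+1$ and produces \eqref{eq: deg 3 2}, in which the same sum appears with the opposite sign, $+2\sum_{i=i_0}^{\gamma-1}\dim V_{1,i}$. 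Adding the two inequalities eliminates the sum and gives $2L^2\ge 9h^0(V,L)-12LM-6\gamma-12$, whence the coefficient $\frac{2}{9}=\frac{4}{9\cdot 2!}$. To repair your write-up you would need to replace the single claimed bound by this pair of complementary inequalities (or find an independent proof of your $\frac{11}{4}$ estimate, which does not appear to follow from the tools in \S\ref{sharp bound preparation}).
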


Similar as we did for Theorem \ref{thm: deg 1}, we may assume that $n=2$, $|L|$ is base point free, and $h^0(V, L-M) > 0$. For general $n$, we just use the induction. Note that by our assumption, the restriction of $a$ on a general member of $a^*|H|$ is not composed with an involution, either. See  \cite[Proposition 2.8]{Barja_Pardini_Stoppino} for example. This guarantees that the inductive argument also works in this situation. Therefore, Theorem \ref{thm: deg 3} boils down to the following lemma.

\begin{lemma} \label{lem: deg 3 surface}
	Theorem \ref{thm: deg 3} holds when $n=2$, $|L|$ is base point free, and $h^0(V, L-M) > 0$.
\end{lemma}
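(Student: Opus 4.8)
The plan is to mirror the proof of Lemma \ref{lem: deg 1 surface}, the crucial difference being that the multiplication bound for a map not composed with an involution is strictly weaker than the birational one, so a single estimate on the even graded pieces $V_{2,2i}$ no longer suffices. Since we are already in the case $n=2$, $|L|$ base point free and $h^0(V,L-M)>0$, it is enough to establish an inequality of the shape
$$
2h^0(V,2L) - 11\,h^0(V,L) \ge -c_1 LM - c_2
$$
for absolute constants $c_1,c_2$. Combined with Lemma \ref{lem: L^2 h^0(L) h^0(2L)}, this yields $h^0(V,L) \le \frac{2}{9}L^2 + \frac19\left(c_1 LM + c_2 + 2\right)$, and the lower-order term is $\le 2\varepsilon(V,L,M)$ by the very same computation (using $LM \le \lambda M^2$ for the minimal $\lambda$ with $\lambda M - L$ pseudo-effective) as in Lemmas \ref{lem: L-M=0} and \ref{lem: deg 1 surface}. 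The target coefficient $\tfrac29$ is exactly the $n=2$ instance of $\tfrac{4}{9n!}$ in Theorem \ref{thm: deg 3}, matching the slope $\tfrac94$.

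To prove the displayed claim I would keep the \emph{full} graded expression from Lemma \ref{lem: h^0(L) h^0(2L)}, namely $h^0(V,2L)=\sum_{j=0}^{2\gamma-1}\dim V_{2,j}+h^0(V,2L-2\gamma M)$, rather than discarding the odd pieces as was done in the birational case. Writing $w_i=\dim V_{1,i}$, I would bound the even pieces through the squaring map $V_{1,i}\cdot V_{1,i}\subseteq V_{2,2i}$ and the odd pieces through the mixed map $V_{1,i}\cdot V_{1,i+1}\subseteq V_{2,2i+1}$. The input is the version of \cite[Lemma 5.3]{Barja_Pardini_Stoppino} for $k=2$ in the regime ``not composed with an involution'': exactly as in Lemma \ref{lem: V_2i V_1i}, the containment $a^*|H||_C\subseteq V_{1,i}$ forces each $V_{1,i}$ with $0\le i\le\gamma-1$ to induce a map on the general curve $C\in a^*|H|$ that is not composed with an involution, by \cite[Proposition 2.8]{Barja_Pardini_Stoppino}. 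Feeding the resulting even bound (of ``$\tfrac52$''-type) and mixed odd bound (of ``$3$''-type) into the sum, the two families of pieces contribute coefficients totalling $\tfrac{11}{2}$ against $\sum_{i=0}^{\gamma-1} w_i = h^0(V,L)-h^0(V,L-\gamma M)$; the leftover terms $h^0(V,L-\gamma M)$, $\gamma$ and $\dim V_{2,0}$ are controlled precisely as in Lemma \ref{lem: deg 1 surface}, via $h^0(V,L-\gamma M)+\gamma\le LM+1$ (Lemma \ref{lem: L-M=0}) and $\dim V_{2,0}\le 2LM+1$ (Clifford plus Riemann--Roch on $C$).

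The hardest step is establishing this claim with the constant exactly right. In the birational case a single ``factor $3$'' estimate on the even pieces already produces the coefficient $6$, hence $\tfrac15$; here the squaring map only gives ``$\tfrac52$'', which by itself would yield the too-weak $\tfrac14$, so the missing $\tfrac12$ must be recovered from the odd pieces through the mixed-multiplication estimate. The delicate point is therefore to verify that, under the sole hypothesis that the map is not composed with an involution, the even and mixed curve-level bounds of \cite[Lemma 5.3]{Barja_Pardini_Stoppino} hold with constants summing to precisely $\tfrac{11}{2}$ (equivalently, to the slope $\tfrac94$), and to keep all additive error terms uniformly $O(LM)$ so that they are absorbed into $2\varepsilon(V,L,M)$. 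Once this bookkeeping is arranged, the inductive reduction to the surface case granted just before the statement of the lemma completes the proof of Theorem \ref{thm: deg 3}.
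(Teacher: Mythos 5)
Your proposal has a genuine gap at its central step. You want to reach the slope $\tfrac94$ purely from multiplication maps by keeping the odd graded pieces and feeding in a ``$\tfrac52$-type'' bound for $V_{1,i}\cdot V_{1,i}\subseteq V_{2,2i}$ and a ``$3$-type'' bound for $V_{1,i}\cdot V_{1,i+1}\subseteq V_{2,2i+1}$ under the sole hypothesis that the map is not composed with an involution. No such bounds are available: Lemma \ref{lem: V_2i V_1i} (i.e.\ \cite[Lemma 5.3]{Barja_Pardini_Stoppino} for $k=2$) gives only the factor $2$ in general and the factor $3$ under \emph{birationality} of $|N_i||_C$, and the base-point-free pencil trick gives only $\dim(V_{1,i}\cdot V_{1,i+1})\ge \dim V_{1,i}+\dim V_{1,i+1}-1$, again a factor-$2$ statement. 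With the bounds that are actually justified, your scheme yields $h^0(V,2L)\ge 4\sum_i \dim V_{1,i}-O(LM)$, hence via Lemma \ref{lem: L^2 h^0(L) h^0(2L)} only $h^0(V,L)\le \tfrac13 L^2+O(LM)$, which is strictly weaker than the required $\tfrac29 L^2$. You acknowledge that verifying the constants is ``the delicate point,'' but that verification is the entire mathematical content of the lemma, and it is not merely bookkeeping: the example in the paper (a double cover of $B\times A$) shows the factor cannot be improved by soft arguments, so the hypothesis on involutions must enter in an essential, quantitative way that your outline does not supply.

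The paper's actual mechanism is different and worth noting. It keeps the multiplication-map inequality in the form \eqref{eq: deg 3 1}, accepting a deficit term $-2\sum_{i=i_0}^{\gamma-1}\dim V_{1,i}$ coming from the indices where only the factor-$2$ bound applies. It then derives a \emph{second, independent} inequality \eqref{eq: deg 3 2} with the \emph{opposite} sign on that sum: for $i\ge i_0$ the linear series $V_{1,i}$ induces a map $\phi_i:C\to C'_i$ of degree at least $3$ (this is exactly where ``not composed with an involution'' is used), so a Clifford/Riemann--Roch count on the image curve gives $\dim V_{1,i}\le \tfrac13 N_iM+1\le\tfrac16(N_{i-1}^2-N_i^2)+1$, and telescoping over the nef divisors $N_i$ produces a lower bound for $L^2$ in terms of $4h^0(V,L)+2\sum_{i\ge i_0}\dim V_{1,i}$. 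Adding the two inequalities eliminates the problematic sum and yields $2L^2\ge 9h^0(V,L)-O(LM)$. Your proposal contains no analogue of this second inequality, of the divisors $N_i$, or of the degree argument on $C'_i$, and without them the target coefficient is out of reach.
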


\begin{proof}
	We sketch the proof here since it is similar to that of Lemma \ref{lem: deg 1 surface}.
	
	Let $C$, $\gamma$, $V_{1, i}$, $V_{2, j}$, $N_i$ be identical to those in \S \ref{sharp bound preparation}. Let
	$$
	i_0 = \min \{0 \le i \le \gamma-1 | V_{1, i} \,\mbox{does not induce a birational map on $C$}\}.
	$$
	With this notation, using the same strategy as for proving \eqref{eq: deg 1 claim}, we deduce that
	$$
	h^0(V, 2L) - 6h^0(V, L) \ge - 8LM - 7 - 2\sum_{i=i_0}^{\gamma-1} \dim V_{1, i}.
	$$
	Comparing to the proof of \eqref{eq: deg 1 claim}, the only modification we make here is that, for $i \ge i_0$, we have to use the first inequality in Lemma \ref{lem: V_2i V_1i} to compare $\dim V_{2, 2i}$ with $\dim V_{1, i}$, which is the reason for having an extra term  $-2\sum_{i=i_0}^{\gamma-1} \dim V_{1, i}$ on the right hand side. 
	
	Combining this inequality with Lemma \ref{lem: L^2 h^0(L) h^0(2L)}, it follows that
	\begin{equation} \label{eq: deg 3 1}
		L^2 \ge 5h^0(V, L) - 2\sum_{i=i_0}^{\gamma-1} \dim V_{1, i} - 8 (LM +1).
	\end{equation}

	On the other hand, recall that for any $0 \le i \le \gamma-1$, $N_i$ is nef and 
	$$
	\dim V_{1, i} = \dim |N_i||_C+1.
	$$ 
	Note that in the current setting, $N_0 = L$ and $|N_{i+1}|$ is also the movable part of $|N_i-M|$.
	
	For any $i > 0$, we have
	\begin{equation} \label{eq: deg 3 V_{1, i} 1}
		N_{i-1}^2 - N_i^2 \ge (N_{i-1} + N_i) M \ge 2 N_iM \ge 4 \dim V_{1, i} - 4,
	\end{equation}
	where the last inequality follows from the fact that $K_C - (L|_C - iM|_C)$ is pseudo-effective.
	When $i \ge i_0$, $V_{1, i}$ induces a map on $C$ of degree at least three. Otherwise, the map $\phi_{|L-iM|}$ induced by the linear system $|L-iM|$ would factor through a degree two map from $V$, and $a$ would factor through $\phi_{|L-iM|}$, which is a contradiction. Let 
	$$
	\phi_i: C \to C'_i
	$$
	be the morphism induced by the movable part of $V_{1, i}$. Then $\deg \phi_i \ge 3$. Since $\phi_i$ factor through the normalization of $C'_i$, we may assume that the curve $C'_i$ is normal, hence smooth. Then
	$$
	|N_i||_C = \phi_i^*|L'_i| + Z'_i,
	$$
	where $L'_i$ and $Z'_i$ are effective divisors on $C'$. Since 
	$$
	\dim V_{1, i} \, \le \, h^0(C'_i, L'_i) \le \deg L'_i + 1 \le \frac{1}{\deg \phi_i} N_iM + 1,
	$$ 
	similar to \eqref{eq: deg 3 V_{1, i} 1}, we deduce that for $i \ge \max\{1, i_0\}$,
	\begin{align}
		N_{i-1}^2 - N_i^2 \ge 2N_i M \ge 6 \dim V_{1, i} - 6. \label{eq: deg 3 V_{1, i} 2}
	\end{align}
	Note that we also have
	$$
	\dim V_{1, 0} \le \left\{ 
		\begin{array} {ll}
		    \frac{1}{2}LM + 1, & i_0 > 0; \\
		    & \\
		    \frac{1}{3}LM + 1, & i_0 = 0.
		\end{array}
		\right.
	$$ 
	Together with \eqref{eq: deg 3 V_{1, i} 1} and \eqref{eq: deg 3 V_{1, i} 2} for all $i > 0$, we deduce that
	\begin{align}
		L^2 & = \sum_{i=1}^{\gamma-1} (N_{i-1}^2 - N_i^2) + N_{\gamma-1}^2 \nonumber \\ 
		& \ge 4 \sum_{i=0}^{i_0-1} \dim V_{1, i} + 6 \sum_{i=i_0}^{\gamma-1} \dim V_{1, i} - 2LM - 6\gamma + N_{\gamma-1}^2  \nonumber\\
		& \ge 4h^0(V, L) + 2 \sum_{i=i_0}^{\gamma-1} \dim V_{1, i} - 4h^0(V, L-\gamma M) - 2LM - 6\gamma \nonumber \\
		& \ge 4h^0(V, L) + 2 \sum_{i=i_0}^{\gamma-1} \dim V_{1, i} - 4LM - 6 \gamma - 4. \label{eq: deg 3 2}
	\end{align}
	The third inequality here is due to Lemma \ref{lem: h^0(L) h^0(2L)}. For the last inequality, by Lemma \ref{lem: L-M=0} and the definition of $\gamma$, we have
	$$
	h^0(V, L-\gamma M) \le \frac{1}{2} (LM - \gamma M^2) + 1.
	$$
	Then it is easy to deduce that
	$$
	4h^0(V, L-\gamma M) \le 2LM - 2\gamma M^2 + 4.
	$$
	Thus \eqref{eq: deg 3 2} is verified.
	
	Now adding \eqref{eq: deg 3 1} and \eqref{eq: deg 3 2} together, it follows that
	$$
	2L^2 \ge 9h^0(V, L) - 12 LM - 6\gamma - 12,
	$$
	i.e.,
	$$
	h^0(V, L) \le \frac{2}{9} L^2 + \frac{4}{3} LM + \frac{2}{3} \gamma + \frac{4}{3}.
	$$
	
	Finally, let $\lambda$ be the smallest integer such that $\lambda M-L$ is pseudo-effective. Noting that $\gamma \le \lambda$, we deduce that
	$$
	\frac{4}{3} LM + \frac{2}{3} \gamma + \frac{4}{3} \le \frac{4}{3} \lambda M^2 + \frac{2}{3} \lambda + \frac{4}{3} \le 2(\lambda+1)M^2 = 2 \varepsilon(V, L, M).
	$$
	Thus the whole proof of this lemma is completed.
\end{proof}

\subsubsection{$a$ is composed with an involution and $\kappa(\Sigma) > 0$}
Finally, we consider the case when $a$ is composed with an involution and $\Sigma$ is birational to a smooth projective variety of positive Kodaira dimension. Let $\pi: \Sigma' \to \Sigma$ be a resolution of singularities of $\Sigma$. Then $\kappa(\Sigma') > 0$. Set
$$
r'(L, M, \Sigma') := \frac{LM^{n-1}}{2K_{\Sigma'}(\pi^*H)^{n-1}}
$$
By the assumption, $K_{\Sigma'}(\pi^*H)^{n-1} > 0$. Thus $r'(L, M, \Sigma') < \infty$.

\begin{theorem} \label{thm: deg 2}
	Let the notation be as above. Write $r' =  r'(L, M, \Sigma')$. Suppose that $K_V-L$ is pseudo-effective. Then we have 
    $$
    h^0(V, \rounddown{L}) \le \frac{2 \delta(r') - 1}{(5 \delta(r') - 3) n!} \vol(L) + n \varepsilon(V, L, M).
    $$
    Moreover, for any $\QQ$-divisor $L_1 \le L$, we have 
    $$
    h^0(V, \rounddown{L_1}) \le \frac{2 \delta(r') - 1}{(5 \delta(r') - 3) n!} \vol(L_1) + n \varepsilon(V, L, M).
    $$
\end{theorem}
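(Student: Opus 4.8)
The plan is to follow the same architecture as the proofs of Theorems~\ref{thm: deg 1} and~\ref{thm: deg 3}. First I would make the standard reductions: by passing to the movable part of $|\rounddown{L}|$ on a suitable blow-up (as in Step~1 of Theorem~\ref{thm: subcanonicity}) we may assume $L$ is nef, and in fact that $|L|$ is base point free; and by choosing a general pencil inside the base point free subsystem $a^*|H| \subseteq |M|$ and blowing up its base locus, we reduce the general $n$ to the surface case $n=2$ by induction, exactly as in the reductions preceding Lemmas~\ref{lem: deg 1 surface} and~\ref{lem: deg 3 surface}. Here two inheritance facts are needed: that the restriction of $a$ to a general member of $a^*|H|$ is again composed with an involution (cf.\ \cite[Proposition 2.8]{Barja_Pardini_Stoppino}, invoked for the same purpose in the reduction of Theorem~\ref{thm: deg 3}), and that the subcanonicity parameter restricts monotonically, i.e.\ $r'(L_i|_F, M_0|_F, \Sigma'_F) \le r'$, so that the non-increasing function $\delta$ can be applied on the fiber; this parallels the inequality $r(L_i|_F, M_0|_F) \le r$ in Step~3 of Theorem~\ref{thm: subcanonicity}. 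The non-big case $L^n = 0$ and the absorption of the lower order terms into $n\varepsilon(V, L, M)$ (via the integer $\lambda$ with $\lambda M - L$ pseudo-effective) are handled as in Steps~3 and~4 of Theorem~\ref{thm: subcanonicity}.

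For the surface base case I would run the doubling machinery of Lemma~\ref{lem: deg 3 surface} with the data $C$, $\gamma$, $V_{1, i}$, $V_{2, j}$, $N_i$ of \S\ref{sharp bound preparation}, splitting the indices according to whether $V_{1, i}$ induces a birational map on the general curve $C$. Because $a$ is composed with an involution, $a|_C$ has degree two; hence any non-birational $V_{1, i}$, refining $a|_C$, must have degree \emph{exactly} two on $C$ (degree at least three is impossible here, in sharp contrast with Lemma~\ref{lem: deg 3 surface}). For the birational indices the strong inequalities of Lemma~\ref{lem: V_2i V_1i} apply as before; the difficulty is concentrated in the degree-two indices, for which only the weak inequality $\dim V_{2, 2i} \ge 2 \dim V_{1, i} - 1$ is available and which, treated naively, would yield merely the coefficient $1/2$, the value of $\frac{2\delta(r')-1}{5\delta(r')-3}$ at $\delta(r') = 1$.

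The hypothesis $\kappa(\Sigma') > 0$ enters precisely to control these degree-two indices. The factor $2$ in $r' = \frac{LM^{n-1}}{2K_{\Sigma'}(\pi^*H)^{n-1}}$ records that a degree-two $V_{1, i}$ descends along the quotient map attached to the involution to a linear system on a curve whose canonical degree is measured by $K_{\Sigma'}$ (through the resolution $\pi$) rather than by $K_V$. Transferring this subcanonicity into the Riemann--Roch/Clifford estimates that feed the doubling inequality, through the subcanonical dichotomy used in Step~2 of Theorem~\ref{thm: subcanonicity}, introduces $\delta(r')$ into the degree-two bounds, so that the analogues of \eqref{eq: deg 3 1} and \eqref{eq: deg 3 2} acquire $\delta(r')$-dependence; the same cancellation as in Lemma~\ref{lem: deg 3 surface}, followed by absorbing the lower order terms into $\varepsilon(V, L, M)$, then produces the interpolated coefficient $\frac{2\delta(r')-1}{5\delta(r')-3}$, which degenerates to $1/2$ when $\delta(r') = 1$ and to $3/7$ when $\delta(r') = 2$. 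I expect this fusion to be the main obstacle: neither the pure mapping-degree estimate nor the pure subcanonicity bound of Theorem~\ref{thm: subcanonicity} reaches $\frac{2\delta(r')-1}{5\delta(r')-3}$ on its own, so the two mechanisms must be genuinely intertwined at the level of the curve estimates rather than applied in succession, and the bookkeeping must be arranged so that the cancellation yields exactly this constant.

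Finally, the \emph{moreover} assertion is immediate from the first one by monotonicity. For $L_1 \le L$ the divisor $K_V - L_1 = (K_V - L) + (L - L_1)$ is pseudo-effective, so the main inequality applies to $L_1$ with its own parameter $r'(L_1) := \frac{L_1 M^{n-1}}{2K_{\Sigma'}(\pi^*H)^{n-1}} \le r'$, the inequality holding since $(L - L_1)M^{n-1} \ge 0$. As $\delta$ is non-increasing and $x \mapsto \frac{2x-1}{5x-3}$ is decreasing, the coefficient attached to $L_1$ is at most that attached to $L$, so we may replace $r'(L_1)$ by $r'$; and since $L_1 \le L$, Proposition~\ref{prop: epsilon}(1) gives $\varepsilon(V, L_1, M) \le \varepsilon(V, L, M)$. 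Combining these with the main bound applied to $L_1$ yields the stated estimate for $h^0(V, \rounddown{L_1})$.
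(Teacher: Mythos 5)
Your proposal follows essentially the same route as the paper's proof: the identical reductions to the surface case via a general pencil in $a^*|H|$ (with the involution and the positivity of $\kappa(\Sigma')$ inherited by the general hyperplane section), the doubling machinery of \S\ref{sharp bound preparation} split at $i_0$, the subcanonical Clifford bound $\dim V_{1,i} \le \frac{1}{2\delta(r'_i)}N_iM + 1$ on the quotient curve $C'_i$ together with the comparison $r'_i \le r'$ obtained from $\deg K_{C'_i} \ge K_{\Sigma'}(\pi^*H)$ via the factorization of $a|_C$ through $\phi_i$ and Hurwitz, and the elimination of $\sum_{i\ge i_0}\dim V_{1,i}$ producing the coefficient $\frac{2\delta(r')-1}{10\delta(r')-6}$, plus the same monotonicity argument for the ``moreover'' clause. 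The one slip is immaterial: being composed with an involution does not force $\deg(a|_C)$, nor the degree of the map induced by a non-birational $V_{1,i}$, to equal two --- the argument only needs $\deg\phi_i \ge 2$ for $i \ge i_0$, which is what the paper uses.
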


Here the function $\delta(x)$ is the same as that in Theorem \ref{thm: subcanonicity}. Note that under this setting, $\delta(r') > 1$. Moreover, since $r'_1:=r'(L_1, M, \Sigma') \le r'$, we have $\delta(r'_1) \ge \delta(r')$ and $\frac{2 \delta(r'_1) - 1}{5 \delta(r'_1) - 3} \le \frac{2 \delta(r') - 1}{5 \delta(r') - 3}$. Therefore, the second inequality in Theorem \ref{thm: deg 2} can be deduced from the first one for $L_1$.

Note that the restriction of $a$ on a general member of $a^*|H|$ is composed with an involution. Furthermore, by the adjunction, a smooth model of a general member of $|H|$ has positive Kodaira dimension. Thus the induction method works here, and Theorem \ref{thm: deg 2} is finally reduced to the following result.

\begin{lemma} \label{lem: deg 2 surface}
	Theorem \ref{thm: deg 2} holds when $n=2$, $|L|$ is base point free, and $h^0(V, L-M) > 0$.
\end{lemma}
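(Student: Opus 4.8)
The plan is to follow the architecture of the proof of Lemma \ref{lem: deg 3 surface}, retaining the two-inequality strategy but replacing the degree-three estimate for $i \ge i_0$ by a degree-two estimate sharpened through the subcanonicity coming from $\kappa(\Sigma') > 0$. So I keep all the notation of \S \ref{sharp bound preparation}: let $C \in a^*|H|$ be a general (hence smooth) member, let $\gamma$, $V_{1,i}$, $V_{2,j}$ and the nef divisors $N_i$ be as there, and set
$$
i_0 = \min\{0 \le i \le \gamma-1 \mid V_{1,i}\ \text{does not induce a birational map on } C\}.
$$
Exactly as in Lemma \ref{lem: deg 3 surface}, the $V_2$-analysis — using Lemma \ref{lem: h^0(L) h^0(2L)} together with the birational bound of Lemma \ref{lem: V_2i V_1i} for $i < i_0$ and its degree-two bound for $i \ge i_0$ — reproduces verbatim the inequality \eqref{eq: deg 3 1}, namely
$$
L^2 \ge 5 h^0(V, L) - 2\sum_{i=i_0}^{\gamma-1} \dim V_{1,i} - 8(LM + 1),
$$
and this step needs no change, since it only distinguishes birational from non-birational $V_{1,i}$.

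The real work is the second (telescoping) inequality, where I must exploit $\kappa(\Sigma') > 0$. For $i < i_0$ I keep the Clifford bound $\dim V_{1,i} \le \frac14(N_{i-1}^2 - N_i^2) + 1$ of \eqref{eq: deg 3 V_{1, i} 1}. For $i \ge i_0$ the map $\phi_i : C \to C'_i$ induced by the movable part of $V_{1,i}$ has $\deg \phi_i \ge 2$, and writing $|N_i||_C = \phi_i^*|L'_i| + Z'_i$ on a smooth model $C'_i$ we get $\dim V_{1,i} = h^0(C'_i, L'_i)$ with $\deg L'_i \le \frac12 N_iM$. The new point is that, because $V_{1,i} \supseteq (a^*|H|)|_C$, the map $\phi_i$ refines $a|_C$, so $C'_i$ dominates the image curve $\Gamma' \in |\pi^*H|$ of $C$; hence $\deg K_{C'_i} \ge \deg K_{\Gamma'} = (K_{\Sigma'} + \pi^*H)\pi^*H \ge K_{\Sigma'}\pi^*H > 0$, the positivity being exactly $\kappa(\Sigma') > 0$. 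In particular $C'_i$ has positive genus, so the $n=1$ case of Theorem \ref{thm: subcanonicity} applies on $C'_i$ and gives
$$
\dim V_{1,i} = h^0(C'_i, L'_i) \le \frac{1}{\delta(\tau_i)} \deg L'_i + 1, \qquad \tau_i := \frac{\deg L'_i}{\deg K_{C'_i}}.
$$
The factor $2$ built into $r'$ is precisely calibrated so that $\tau_i \le \frac{N_iM/2}{K_{\Sigma'}\pi^*H} \le \frac{LM}{2K_{\Sigma'}\pi^*H} = r'$; since $\delta$ is non-increasing, writing $\delta' = \delta(r')$ this yields the per-step estimate
$$
\dim V_{1,i} \le \frac{1}{2\delta'} N_iM + 1 \le \frac{1}{4\delta'}(N_{i-1}^2 - N_i^2) + 1 \qquad (i \ge i_0),
$$
where the last step uses $M \le N_{i-1} - N_i$ as in \eqref{eq: deg 3 V_{1, i} 1}. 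Summing the two regimes telescopically, together with the bounds on $\dim V_{1,0}$, $h^0(V, L-\gamma M)$ and $\gamma$ from Lemma \ref{lem: L-M=0}, produces the analogue of \eqref{eq: deg 3 2} with the coefficient $6$ for $i \ge i_0$ replaced by $4\delta'$:
$$
L^2 \ge 4 h^0(V, L) + (4\delta' - 4)\sum_{i=i_0}^{\gamma-1} \dim V_{1,i} - (\text{linear terms}).
$$

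Finally I would form the combination $4(\delta'-1)$ times the first inequality plus $2$ times the second; the sums $\sum_{i=i_0}^{\gamma-1}\dim V_{1,i}$ cancel and one is left with
$$
(4\delta' - 2) L^2 \ge (20\delta' - 12) h^0(V, L) - (\text{linear terms}),
$$
that is, $h^0(V, L) \le \frac{2\delta' - 1}{2(5\delta' - 3)} L^2 + (\text{linear terms})$, which is the claimed main coefficient since $n! = 2$. The residual linear terms are then absorbed into $2\varepsilon(V, L, M) = 2(\lambda+1)M^2$ via $\gamma \le \lambda$ and $LM \le \lambda M^2$, exactly as at the end of Lemma \ref{lem: deg 3 surface}; one checks that the $\delta'$-dependent weights stay bounded on $\delta' \in (1,2]$ (as $\delta' \to 1$ the weight on the first inequality vanishes and one recovers the coefficient $\tfrac14$). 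I expect the main obstacle to be the geometric heart of the telescoping step: confirming that $\phi_i$ genuinely refines $a|_C$, so that $C'_i$ dominates $\Gamma'$, and establishing $\deg K_{C'_i} \ge K_{\Sigma'}\pi^*H$, so that the degree-two gain $\tfrac12$ and the positivity of $K_{\Sigma'}$ combine into precisely the invariant $r'$. Once this per-step bound is in hand, the $\delta'$-weighted bookkeeping and the absorption of the linear terms into $\varepsilon$ are routine, though they must be carried out carefully to pin down the constant $\frac{2\delta'-1}{2(5\delta'-3)}$.
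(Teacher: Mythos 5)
Your proposal follows the paper's proof essentially step for step: the same reuse of \eqref{eq: deg 3 1}, the same replacement of the degree-three estimate by $\dim V_{1,i} \le \frac{1}{2\delta(r')}N_iM + 1$ via the $n=1$ case of Theorem \ref{thm: subcanonicity}, the same key geometric point that $a|_C$ factors through $\phi_i$ so that $C'_i$ dominates a general member of $|\pi^*H|$ and Hurwitz gives $\deg K_{C'_i} \ge K_{\Sigma'}(\pi^*H)$, and the same elimination of $\sum_{i \ge i_0}\dim V_{1,i}$ (your weighted combination is just twice the paper's, yielding the identical coefficient $\frac{2\delta'-1}{2(5\delta'-3)}$). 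The argument is correct and matches the paper's route.
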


\begin{proof}
	The proof is just a modification of the proof of Lemma \ref{lem: deg 3 surface}. We sketch it and leave the details to the interested reader.
	
	Let $C$, $\gamma$, $V_{1, i}$, $V_{2, j}$, $N_i$ and $i_0$ be identical to
	those in the proof of Lemma \ref{lem: deg 3 surface}. Then it is easy to see that \eqref{eq: deg 3 1} still holds here, i.e., 
	\begin{equation} \label{eq: deg 2 1}
		L^2 \ge 5h^0(V, L) - 2\sum_{i=i_0}^{\gamma-1} \dim V_{1, i} - 8 (LM +1).
	\end{equation}
	For any $1 \le i \le \gamma-1$, \eqref{eq: deg 3 V_{1, i} 1} holds also here, i.e.,
	\begin{equation} \label{eq: deg 2 V_{1, i} 1}
		\dim V_{1, i} \le \frac{1}{4} (N_{i-1}^2 - N_i^2) + 1.
	\end{equation}
	
	The major modification is a replacement of \eqref{eq: deg 3 V_{1, i} 2}. For $i_0 \le i \le \gamma - 1$, $V_{1, i}$ induces a map on $C$ of degree at least two. Let $\phi_i: C \to C'_i$, $L'_i$ and $Z'_i$ be as in the proof of Lemma \ref{lem: deg 3 surface}. We may further assume that the curve $C'_i$ is normal. 
	By Theorem \ref{thm: subcanonicity} and the fact that $\deg \phi_i \ge 2$, we deduce that
	$$
	\dim V_{1, i} \, \le \, h^0(C'_i, L'_i) \le \frac{1}{\delta(r'_i)} \deg L'_i + 1 \le \frac{1}{2\delta(r'_i)} N_iM + 1,
	$$
	where $r'_i = \frac{\deg L'_i}{\deg K_{C'_i}}$. Now we claim that
	$$
	\delta(r'_i) \ge \delta(r')
	$$
	for any $i \ge i_0$ as above. With this claim, we deduce that for $i \ge \max \{1, i_0\}$,
	\begin{equation} \label{eq: deg 2 V_{1, i} 2}
		\dim V_{1, i} \le \frac{1}{4\delta(r')} (N_{i-1}^2 - N_i^2) + 1.
	\end{equation}
	
	To prove the claim, we only need to prove that $r'_i \le r'$. Since we already have $\deg L'_i \le \frac{1}{2} LM$ as above, it suffices to prove that $\deg K_{C'_i} \ge K_{\Sigma'}(\pi^*H)$. This is rather obvious. The key is to note that $a|_C$ factors through $\phi_i$. Via this factorization, $C'_i$ maps to a general curve in $|H|$ on $\Sigma$. Since $\pi^*|H|$ is base point free, by Bertini's theorem, a general member of $\pi^*|H|$ is smooth. Moreover, the aforementioned map on $C'_i$ lifts to a map from $C'_i$ to a general member $C''\in \pi^*|H|$. Therefore, by the Hurwitz formula and the adjunction formula,
	$$
	\deg K_{C'_i} \ge \deg K_{C''} = K_{\Sigma'}(\pi^*H) + (\pi^*H)^2 > K_{\Sigma'}(\pi^*H).
	$$
	Thus the claim is verified, and \eqref{eq: deg 2 V_{1, i} 2} is established.
	
	Having the above modification, we can proceed the proof as before. Sum up \eqref{eq: deg 2 V_{1, i} 1} and \eqref{eq: deg 2 V_{1, i} 2} over all the above $i > 0$. Note that 
	$$
	\dim V_{1, 0} \le \left\{ 
	    \begin{array} {ll}
	        \frac{1}{2}LM + 1, & i_0 > 0; \\
	        & \\
	        \frac{1}{2 \delta(r')}LM + 1, & i_0 = 0.
	    \end{array}
	    \right.
	$$ 
	It follows that
	\begin{align}
		L^2 & \ge 4 \sum_{i=0}^{i_0-1} \dim V_{1, i} + 4\delta(r')  \sum_{i=i_0}^{\gamma-1} \dim V_{1, i} - 2LM - 4 \delta(r')\gamma + N_{\gamma-1}^2 \nonumber \\
		 & \ge 4h^0(V, L) + 4(\delta(r') - 1) \sum_{i=i_0}^{\gamma-1} \dim V_{1, i} - 4h^0(V, L-\gamma M) \nonumber \\ & \quad - 2LM - 4 \delta(r') \gamma. \nonumber
	\end{align}
	Using the argument for proving \eqref{eq: deg 3 2}, we can similarly deduce that 
	$$
	4h^0(V, L-\gamma M) + 2LM + 4 \delta(r') \gamma  \le 4LM + 4 \delta(r')\gamma + 4.
	$$
	The above two inequalities imply that
	\begin{equation} \label{eq: deg 2 2}
		L^2 \ge 4h^0(V, L) + 4(\delta(r') - 1) \sum_{i=i_0}^{\gamma-1} \dim V_{1, i} - 4LM - 4 \delta(r') \gamma - 4.
	\end{equation}
	
	For simplicity, we just write $\delta = \delta(r')$. As before, we use \eqref{eq: deg 2 1} and \eqref{eq: deg 2 2} together to eliminate $\sum_{i=i_0}^{\gamma-1} \dim V_{1, i}$. It follows that
	$$
	(2 \delta- 1) L^2 \ge (10 \delta- 6) h^0(V, L) - (16 \delta - 12)LM - 4 \delta \gamma - (16 \delta - 12),
	$$
	i.e.,
	$$
	h^0(V, L) \le \frac{2 \delta - 1}{10 \delta- 6} L^2 + \frac{8 \delta - 6}{5 \delta - 3} LM + \frac{2 \delta}{5 \delta - 3} \gamma + \frac{8 \delta - 6}{5 \delta - 3}.
	$$
	Since $1 < \delta \le 2$, it is straightforward to check that the above inequality implies that
	\begin{equation} \label{eq: deg 2 case 2}
		h^0(V, L) \le \frac{2 \delta - 1}{10 \delta - 6} L^2 + \frac{10}{7}LM + \gamma +  \frac{10}{7},
	\end{equation}
	Once again, let $\lambda$ be the smallest integer such that $\lambda M - L$ is pseudo-effective. Since $M^2 = (\deg a)H^2 \ge 2$ and $\gamma \le \lambda$, we deduce that
	$$
	\frac{10}{7} LM + \gamma +  \frac{10}{7} \le \frac{10}{7} \lambda M^2 + \frac{1}{2} \lambda M^2 + \frac{10}{7} < 2(\lambda+1)M^2 = 2 \varepsilon(V, L, M).
	$$
	Thus the whole proof is completed.
\end{proof}

\section{Some results about $\chi_f$} \label{section: chi_f}

Let $f: X \to B$ be a fibration from a smooth variety $X$ to a smooth curve $B$ of genus $g$, with a general fiber $F$. Recall that
$$
\chi_f:= \chi(X, \omega_X) - \chi(B, \omega_B) \chi(F, \omega_F).
$$
The goal of this section is to list some results about this relative invariant. We always assume that $f$ is of maximal Albanese dimension. Denote by
$$
a: X \to A
$$
the Albanese map of $X$. Let $q = \dim A = h^1(X, \CO_X)$. The above notation will be used throughout this section.

\subsection{$\chi_f$ equals the degree of a twisted Hodge bundle}

The following result relates $\chi_f$ to the degree of a twisted Hodge bundle.
\begin{prop} \label{prop: chi_f degree}
	With the above notation, we have
	$$
	\chi_f = \deg f_*(\omega_{X/B} \otimes \CP),
	$$
	where $\CP$ is a general torsion element in $\Pic^0(X)$.\footnote{Here being general means that $\CP$ is not contained in a certain proper subvariety (usually called the cohomological jumping loci) of $\Pic^0(X)$.}
\end{prop}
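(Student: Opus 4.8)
The plan is to reduce the statement to a rank-and-degree computation for the ordinary pushforward and then twist back. Writing $g = g(B)$ and using $\omega_{X/B} = \omega_X \otimes f^*\omega_B^{-1}$, the projection formula gives $f_*(\omega_{X/B} \otimes \CP) = f_*(\omega_X \otimes \CP) \otimes \omega_B^{-1}$. Setting $E := f_*(\omega_X \otimes \CP)$, this yields $\deg f_*(\omega_{X/B} \otimes \CP) = \deg E - (2g-2)\,\rank E$. So it suffices to compute $\rank E$ and $\deg E$ for a general torsion $\CP \in \Pic^0(X)$.

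The key claim is that for general $\CP$ one has $\rank E = \chi(F, \omega_F)$ together with the vanishing $R^q f_*(\omega_X \otimes \CP) = 0$ for all $q > 0$. For the rank, a general fiber $F$ is smooth, so by adjunction $(\omega_X \otimes \CP)|_F = \omega_F \otimes \CP|_F$ and $\rank E = h^0(F, \omega_F \otimes \CP|_F)$. Since $f$ is of maximal Albanese dimension, so is $F$; hence by the generic vanishing theorem of Green--Lazarsfeld the cohomology support loci $V^q(F, \omega_F) = \{Q \in \Pic^0(F) : h^q(F, \omega_F \otimes Q) > 0\}$ are proper for every $q > 0$. Choosing $\CP$ so that $\CP|_F$ avoids their union forces $H^q(F, \omega_F \otimes \CP|_F) = 0$ for $q > 0$, whence $\rank E = \chi(F, \omega_F \otimes \CP|_F) = \chi(F, \omega_F)$, and simultaneously the fibres of $R^q f_*$ vanish at the generic point of $B$. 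To promote this to honest sheaf vanishing I would use that $\CP$ is torsion: if $m$ is its order, take the associated étale cyclic cover $\pi : Y \to X$ with $Y$ smooth, so that $\omega_X \otimes \CP$ is a direct summand of $\pi_* \omega_Y$. Applying Kollár's torsion-freeness theorem to the composite $f \circ \pi : Y \to B$ shows $R^q (f\circ\pi)_* \omega_Y = \bigoplus_j R^q f_*(\omega_X \otimes \CP^{-j})$ is torsion-free, hence so is its summand $R^q f_*(\omega_X \otimes \CP)$; being torsion-free and generically zero, it vanishes identically.

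Granting these two facts, the conclusion is a short computation. The Leray spectral sequence degenerates to $\chi(X, \omega_X \otimes \CP) = \chi(B, E)$, and the left-hand side equals $\chi(X, \omega_X)$ because $\CP$ is numerically trivial (so $\omega_X \otimes \CP$ and $\omega_X$ have the same Chern character, and one invokes Hirzebruch--Riemann--Roch). Riemann--Roch on the curve gives $\chi(B, E) = \deg E + (1-g)\,\rank E$, so $\deg E = \chi(X, \omega_X) + (g-1)\chi(F, \omega_F)$. Twisting back,
\[
\deg f_*(\omega_{X/B} \otimes \CP) = \deg E - (2g-2)\chi(F, \omega_F) = \chi(X, \omega_X) - (g-1)\chi(F, \omega_F),
\]
and since $\chi(B, \omega_B) = g-1$ this is exactly $\chi_f$.

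The main obstacle is the vanishing $R^q f_*(\omega_X \otimes \CP) = 0$ for $q > 0$, and within it the delicate point is the genericity of the restriction $\CP|_F$. One must know that the image of $\Pic^0(X) \to \Pic^0(F)$ is not contained in $\bigcup_{q>0} V^q(F, \omega_F)$, so that a general (torsion) $\CP$ restricts to a point avoiding all these loci; I expect this to follow from the maximal Albanese dimension of $f$ together with the structure theorem describing the support loci as finite unions of torsion translates of subtori. The torsion-freeness input via the cyclic cover and Kollár's theorem is the second essential ingredient, and it is precisely where the torsion hypothesis on $\CP$ is used. This argument is the slight generalization of \cite[Theorem 2.4]{Hacon_Pardini} adapted to the present relative setting, the difference being that we do not assume $X$ itself of maximal Albanese dimension and therefore work fibrewise on $F$ rather than on $X$.
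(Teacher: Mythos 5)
Your proposal is correct and follows essentially the same route as the paper: the paper simply cites \cite[Corollary 2.3]{Hacon_Pardini} for the two facts you re-derive (that $f_*(\omega_{X/B}\otimes\CP)$ is locally free of rank $\chi(F,\omega_F)$ and that $R^if_*(\omega_{X/B}\otimes\CP)=0$ for $i>0$, via generic vanishing plus Koll\'ar torsion-freeness through the cyclic cover), and then performs the identical Leray/Riemann--Roch computation. The genericity issue you flag is resolved exactly as you expect: since $\Pic^0(X)\cong\Pic^0(A)$ via the Albanese map and $a|_F$ is generically finite onto its image, one applies Green--Lazarsfeld relative to $a|_F:F\to A$, so the jumping loci live in $\Pic^0(A)$ and have codimension at least $i$ there, rather than needing the image of $\Pic^0(X)\to\Pic^0(F)$ to avoid loci defined intrinsically on $F$.
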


\begin{proof}
	This result has been proved by Hacon and Pardini \cite[Theorem 2.4]{Hacon_Pardini} assuming $g(B) \ge 2$. In fact, this assumption can be removed. Here we give a slightly different proof which works for any curve $B$.
	
	By the assumption, $a|_F: F \to A$ is generically finite onto its image. Let $\CP \in \Pic^0(X)$ be a general torsion element. Applying exactly the proof of \cite[Corollary 2.3]{Hacon_Pardini}, we conclude that $f_* (\omega_{X/B} \otimes \CP)$ is a torsion free, hence a locally free sheaf on $B$ of rank $r = \chi(F, \omega_F)$. Still by \cite[Corollary 2.3]{Hacon_Pardini}, for any $i > 0$, 
	$$
	R^i f_* (\omega_{X/B} \otimes \CP) = 0.
	$$
	Together with the Leray spectral sequence, we know that for any $i \ge 0$,
	$$
	h^i(X, \omega_X \otimes \CP) = h^i(B, f_* (\omega_X \otimes \CP) ).
	$$
	In particular, 
	$$
	\chi(X, \omega_X) = \chi(X, \omega_X \otimes \CP) = \chi(B, f_*(\omega_X \otimes \CP)).
	$$
	Combine all above together and apply the Riemann-Roch theorem for $f_* (\omega_X \otimes \CP)$. It follows that
	\begin{align*}
		\deg f_*(\omega_{X/B} \otimes \CP) & =  \deg f_*(\omega_X \otimes \CP) - 2 \chi(F, \omega_F) \chi(B, \omega_B) \\
		& =  \chi (B, f_* (\omega_X \otimes \CP) ) - \chi(F, \omega_F) \chi(B, \omega_B) \\
		& =  \chi(X, \omega_X) - \chi(F, \omega_F) \chi(B, \omega_B) \\
		& =  \chi_f.
	\end{align*}
	Thus the proof is completed.
\end{proof}

\subsection{The degree of the Hodge bundle under \'etale covers} \label{subsection: etale cover}

In this subsection, we assume that $g > 0$. Thus $X$ itself is of maximal Albanese dimension.

Let $\mu_m: A \to A$ be the multiplication-by-$m$ map of $A$. Let $X_m = X \times_{\mu_m} A$. Since $a$ is the Albanese map, $X_m$ is irreducible. Let $J(B)$ be the Jacobian variety of $B$. By the abuse of notation, let $\mu_m: J(B) \to J(B)$ also denote the multiplication-by-$m$ map of $J(B)$, and let $B_m=B \times_{\mu_m} J(B)$. Thus we have the following commutative diagram:
$$
\xymatrix{
	A \ar@/^1.5pc/[rrr]_{\mu_m} \ar[d] & X_m \ar[l]^{a_m} \ar[d]_{f_m} \ar[r]_{\nu_m} & X \ar[r]_{a} \ar[d]_f & A \ar[d]_{h} \\
	J(B) \ar@/_1.5pc/[rrr]_{\mu_m} & B_m \ar[l] \ar[r]_{\sigma_m} & B \ar[r] & J(B)
}
$$
Now we claim that if $m$ is a sufficiently large prime number, the morphism
$$
f_m: X_m \to B_m
$$ 
is always a fibration, i.e., it has connected fibers. To see this, let $A_F = \ker h$, which is also an abelian variety. We may assume that up to a translation by a point in $J(B)$, $a(F)$ generates $A_F$. Thus the kernel of the map $(a|_F)^*: \Pic^0(A_F) \to \Pic^0(F)$ is finite. Thus for any integer $m$ coprime to the cardinality of this kernel, the general fiber of $f_m$ is irreducible.

\begin{prop} \label{prop: degree limit}
	With the above notation, we have
	$$
	\lim\limits_{m \, \mathrm{prime}, m \to \infty} \frac{\deg {f_m}_* \omega_{X_m/B_m}}{m^{2q}} = \chi_f.
	$$
\end{prop}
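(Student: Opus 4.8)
The plan is to express $\deg_{B_m}(f_m)_*\omega_{X_m/B_m}$ as a sum over the $m$-torsion characters of $A$ and to extract the leading term from Proposition \ref{prop: chi_f degree}. First I would record how the invariants transform under the covers. Since $\mu_m:A\to A$ is étale of degree $m^{2q}$ and $a$ is the Albanese map, $\nu_m:X_m\to X$ is étale of degree $m^{2q}$; likewise $\sigma_m:B_m\to B$ is étale of degree $m^{2g}$, and the general fibre $F_m$ of $f_m$ is an étale cover of $F$ of degree $m^{2(q-g)}$, induced by multiplication by $m$ on $A_F=\ker h$. In particular $\omega_{X_m}=\nu_m^*\omega_X$ and $\chi(\CO_\bullet)$ is multiplicative under étale covers, which I use to track degrees of pushforwards along the finite maps $\nu_m,\sigma_m$. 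A short Riemann--Roch computation on $B$ and $B_m$ shows that, because $\sigma_m$ is étale, $\deg_B\sigma_{m*}E=\deg_{B_m}E$ for every vector bundle $E$ on $B_m$ (the $\chi(\CO)$-terms cancel by étaleness); this is the device that moves degrees between $B_m$ and $B$.

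Next I would decompose. As $\mu_m$ is the abelian Galois cover with group $A[m]$, one has $\mu_{m*}\CO_A=\bigoplus_{\alpha\in\hat A[m]}\alpha$, and pulling back by $a$ gives $\nu_{m*}\omega_{X_m}=\bigoplus_{\alpha\in\hat A[m]}\omega_X\otimes a^*\alpha$, where $a^*:\hat A[m]\xrightarrow{\sim}\Pic^0(X)[m]$ runs over all $m$-torsion line bundles. Combining $f\circ\nu_m=\sigma_m\circ f_m$, the projection formula $\omega_{X_m}=\omega_{X_m/B_m}\otimes f_m^*\omega_{B_m}$, and the identity $\deg_B\sigma_{m*}E=\deg_{B_m}E$ above, I obtain the exact formula
$$
\deg_{B_m}(f_m)_*\omega_{X_m/B_m}=\sum_{\alpha\in\hat A[m]}\deg_B f_*(\omega_X\otimes a^*\alpha)-p_g(F_m)\,\deg\omega_{B_m},
$$
where $p_g(F_m)=h^0(F_m,\omega_{F_m})$ is the rank of $(f_m)_*\omega_{X_m/B_m}$ and $\deg\omega_{B_m}=m^{2g}(2g-2)$.

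Then I would pass to the limit. Split $\hat A[m]$ into the \emph{general} characters, for which $a^*\alpha$ avoids the cohomological jumping loci, and the remaining \emph{special} ones. For general $\alpha$, Proposition \ref{prop: chi_f degree} applied to $a^*\alpha$ together with the projection formula gives $\deg_B f_*(\omega_X\otimes a^*\alpha)=\chi_f+\chi(F,\omega_F)(2g-2)$. Since the jumping loci form a proper union of torsion translates of abelian subvarieties of $\hat A$, the number of special characters is $O(m^{2(q-1)})=o(m^{2q})$, while there are $m^{2q}+o(m^{2q})$ general ones; hence the first sum divided by $m^{2q}$ tends to $\chi_f+\chi(F,\omega_F)(2g-2)$. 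For the subtracted term I run the same analysis on the fibre: $F$ is of maximal Albanese dimension, so $\chi(F_m,\omega_{F_m})=m^{2(q-g)}\chi(F,\omega_F)$ exactly, and by generic vanishing on $F$ one has $h^j(F_m,\omega_{F_m})=o(m^{2(q-g)})$ for $j\ge1$, so that $p_g(F_m)/m^{2(q-g)}\to\chi(F,\omega_F)$ and the subtracted term tends to $\chi(F,\omega_F)(2g-2)$. The two limits cancel the spurious $\chi(F,\omega_F)(2g-2)$, leaving exactly $\chi_f$.

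The one genuinely delicate point, and the step I expect to be the main obstacle, is showing that the special characters (and, on the fibre, the higher cohomology) are of lower order. This reduces to two boundedness facts: that $\CP\mapsto\deg_B f_*(\omega_X\otimes\CP)$ takes only finitely many values as $\CP$ ranges over $\Pic^0(X)$, so each special term is $O(1)$, and the analogous statement on $F$. I expect to obtain these from upper semicontinuity together with a stratification of $\Pic^0$ into finitely many loci on which the relevant pushforward is flat, hence of locally constant degree; combined with the bound $O(m^{2(q-1)})$ on the number of torsion points lying in the jumping loci, this forces every error term to be $o(m^{2q})$. The rest is Riemann--Roch bookkeeping on curves.
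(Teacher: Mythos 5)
Your proposal is correct and follows essentially the same route as the paper: decompose $\nu_{m*}\omega_{X_m}$ into the $m^{2q}$ torsion twists $\omega_X\otimes\CP$, push forward, apply Proposition \ref{prop: chi_f degree} to the general twists, and control the exceptional ones by counting torsion points in the jumping loci together with a uniform bound on the degrees. The only real difference is bookkeeping: the paper sums $\deg f_*(\omega_{X/B}\otimes\CP)$ directly (each term non-negative by Hacon--Pardini, uniformly bounded, and generically equal to $\chi_f$), which avoids your spurious $(2g-2)\chi(F,\omega_F)$ term and the separate generic-vanishing computation of $p_g(F_m)/m^{2(q-g)}$.
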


\begin{proof}
	From the above construction, we know that for any $m > 0$, the morphism $\sigma_m: B_m \to B$ is \'etale. By the projection formula, 
	$$
	{\sigma_m}_* \CO_{B_m} = \bigoplus_{\mathcal{P} \in T_m(B)} \CP,
	$$ 
	where $T_m(B) \subset \Pic^0(B)$ is the subgroup of all $m$-torsion line bundles on $B$. There is a natural injective group homomorphism
	$$
	f^*: T_m(B) \to T_m(X)
	$$
	given by the pull-back of $f$, where $T_m(X) \subset \Pic^0(X)$ is the subgroup of all $m$-torsion line bundles on $X$. Let $m$ be a sufficiently large prime number, and let $X'_m = X \times_B B_m$. Then we have the following commutative diagram: 
	$$
	\xymatrix{
		X_m \ar@/_1pc/[rdd]_{f_m} \ar@/^1pc/[rrd]^{\nu_m} \ar[rd]^{\nu'_m} & & \\
		& X'_m \ar[r]^{\sigma'_m} \ar[d]_{f'_m}  & X \ar[d]^f \\
		& B_m \ar[r]^{\sigma_m} & B
	}
    $$
    It is clear that $\nu'_m: X_m \to X'_m$ is a Galois cover with $\Gal(\nu'_m) \simeq \frac{T_m(X)}{f^*T_m(B)}$. Thus by the projection formula,
    $$
    {\nu'_m}_* \omega_{X_m/B_m} = \bigoplus_{\CQ + f^*T_m(B)} \omega_{X'_m/B_m} \otimes (\sigma'_m)^*\CQ.
    $$
    Here the summation runs over all cosets of $f^*T_m(B)$ in $T_m(X)$ (whose cardinality equals $m^{2q-2g}$), and $\CQ$ is any representative in each corresponding coset. Thus we have the following splitting:
	$$
	{f_m}_* \omega_{X_m/B_m} = {f'_m}_* \left( {\nu'_m}_* \omega_{X_m/B_m} \right) = \bigoplus_{\CQ + f^*T_m(B)} \sigma_m^*(f_* (\omega_{X/B} \otimes \CQ)).
	$$ 
    All the above imply particularly that
	\begin{align*}
	\deg {f_m}_* \omega_{X_m/B_m} & = \deg {\sigma_m}_* \left({f_m}_* \omega_{X_m/B_m} \right) \\ 
	& = \deg \left({\sigma_m}_* \left( {f_m}_* \omega_{X_m} \right) \otimes \omega_B^{-1} \right) \\
	& =  \deg \left( f_* \left({\nu_m}_* \omega_{X_m} \right) \otimes \omega_B^{-1} \right).
	\end{align*}
	On the other hand, by the projection formula, 
	$$
	{\nu_m}_* \omega_{X_m} = \bigoplus_{\CP \in T_m(X)} \omega_X \otimes \CP,
	$$
	Thus it follows that 
	$$
	\deg {f_m}_* \omega_{X_m/B_m} = \sum_{\CP \in T_m(X)} \deg f_* ( \omega_{X/B} \otimes \CP).
	$$
	
	Let $S_m(X) = \{\CP \in T_m(X) | \deg f_*(\omega_{X/B} \otimes \CP) = \chi_f \}$ be the subset of $T_m(X)$. By Proposition \ref{prop: chi_f degree}, we know that the set 
	$$
	\bigcup_{m \in \ZZ} \left(T_m(X) \backslash S_m(X)\right)
	$$
	is contained in a proper subvariety of $\Pic^0(X)$. In particular,
	$$
	\lim\limits_{m \to \infty} \frac{\# S_m(X)}{\# T_m(X)} = \lim\limits_{m \to \infty} \frac{\# S_m(X)}{m^{2q}} = 1.
	$$
	Note that $\deg f_*(\omega_{X/B} \otimes \CP)$ is always non-negative (e.g., see \cite{Hacon_Pardini}) and bounded from above independent of $m$.
	We deduce that
	$$
	\lim\limits_{m \to \infty} \frac{\deg {f_m}_* \omega_{X_m/B_m}}{m^{2q}} = \chi_f.
	$$
	Thus the proof is completed.
\end{proof}

\section{Slope inequalities for fibrations over curves}

In this section, we prove a slope inequality for fibrations over curves whose general fiber is a smooth variety of general type. Throughout this section, we always assume that 
$$
f: X \to B
$$ 
is a fibration from a smooth variety $X$ of dimension $n \ge 2$ to a smooth curve $B$. Denote by $F$ a general fiber of $f$. 

\subsection{Xiao's method}  \label{Xiao's method}
Here we review Xiao's method and list some inequalities deduced from it. Most of the following facts can be found in \cite{Xiao} when $n=2$ and in \cite{Ohno, Konno, Barja_Stoppino} for general $n \ge 2$.

Let $L$ be a nef $\QQ$-divisor on $X$. Let 
$$
0= \CE_0 \subsetneq \CE_1 \subsetneq \cdots \subsetneq \CE_m = f_* \CO_X(\rounddown{L})
$$
be the Harder-Narasimhan filtration of $f_* \CO_X(\rounddown{L})$. For any $0 \le i \le m$, set 
$$
r_i= \rank \CE_i, \quad  \mu_i
=\frac{\deg(\CE_i/\CE_{i-1})}{\rank (\CE_i / \CE_{i-1})}.
$$
Then we have
$$
\mu_1> \mu_2 > \cdots > \mu_m
$$
as well as 
\begin{align} \label{eq: deg}
\deg \CE_k = \sum_{i=1}^{k-1} r_i (\mu_i - \mu_{i+1}) +r_k \mu_{k}
\end{align}
for each $1\le k\le m$.

For each $1 \le i \le m$, consider the rational map $\phi_i: X \dashrightarrow \mathbb{P}_B(\mathcal{E}_i)$ associated to the evaluation morphism $f^*\CE_i \to \CO_X(\rounddown{L})$. We may choose a common blowing up $\sigma: Y \to X$ which resolves all indeterminacies of $\phi_i$. Denote by $F_1$ a general fiber of $f \circ \sigma: Y \to B$. Applying Xiao's method, we obtain a sequence of nef $\mathbb{Q}$-Cartier divisors  
$$
N_1\le N_2 \le \cdots \le N_m \le N_{m+1}:= \sigma^*L
$$
on $Y$. Here $N_i = (\phi_i \circ \sigma)^*H_{\CE_i} - \mu_iF_1$, where $H_{\CE_i}$ is a hyperplane section of $\PP_{B}(\CE_i)$. For each $1 \le i \le m$, $N_i|_{F_1}$ is Cartier, $h^0(F_1, N_i|_{F_1})=r_i$,  
$$
N_{i+1}\ge N_i+(\mu_i-\mu_{i+1})F_1,
$$
and 
$$
\sigma^*L\ge N_i+\mu_i F_1.
$$ 
In particular,  $\sigma^*L - \mu_1 F_1$ is pseudo-effective, and for $1\le i\le m-1$, we have
\begin{align*}
N_{i+1}^n\ge (N_i+(\mu_i-\mu_{i+1})F_1)^n\ge N_i^n+n(\mu_i-\mu_{i+1})(N_i|_{F_1})^{n-1}. 
\end{align*} 
Thus the following lemma follows easily by induction.

\begin{lemma} \label{lem: Xiao}
	Keep the same notation as above. Suppose that for some $1\le i\le m$, we have $\mu_i\ge 0$. Let $k:=\max\{i|\ 1\le i\le m \ \text{and}\ \mu_i \ge 0 \}$. Then we have
	$$
	L^n\ge n\sum_{i=1}^{k-1}(\mu_i-\mu_{i+1})(N_i|_{F_1})^{n-1}+n\mu_k(N_k|_{F_1})^{n-1}.
	$$
\end{lemma}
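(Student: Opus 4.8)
The plan is to telescope the chain of inequalities recorded just before the statement and then to cap it off using the comparison $\sigma^*L \ge N_k + \mu_k F_1$. Throughout I write $L^n = (\sigma^*L)^n = N_{m+1}^n$, which is legitimate since $\sigma$ is birational, and I use repeatedly that $F_1$ is a fiber of $f \circ \sigma: Y \to B$ over the curve $B$, so that $F_1 \equiv (f\circ\sigma)^*(\mathrm{pt})$ and hence $F_1^2 \equiv 0$ as a cycle class; in particular $N_k^{n-1} F_1 = (N_k|_{F_1})^{n-1}$ and $(N_k + \mu_k F_1)^n = N_k^n + n\mu_k (N_k|_{F_1})^{n-1}$.

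First I would prove, by induction on $k$, that
$$
N_k^n \ge n\sum_{i=1}^{k-1}(\mu_i - \mu_{i+1})(N_i|_{F_1})^{n-1}.
$$
For $k=1$ this is just $N_1^n \ge 0$, which holds since $N_1$ is nef. For the inductive step, the displayed inequality $N_{i+1}^n \ge N_i^n + n(\mu_i - \mu_{i+1})(N_i|_{F_1})^{n-1}$ (valid for $1 \le i \le m-1$, hence applicable at $i=k-1$ because $k \le m$) gives $N_k^n \ge N_{k-1}^n + n(\mu_{k-1} - \mu_k)(N_{k-1}|_{F_1})^{n-1}$, and substituting the inductive hypothesis for $N_{k-1}^n$ completes the telescoping.

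Next I would add the top term. By the third listed property with $i=k$, the divisor $\sigma^*L - (N_k + \mu_k F_1)$ is effective; moreover $N_k + \mu_k F_1$ is nef, since $N_k$ and $F_1$ are nef and $\mu_k \ge 0$ by the choice of $k$. For two nef $\QQ$-divisors $A \ge B$ one has $A^n \ge B^n$, because $A^n - B^n = \sum_{j=0}^{n-1} A^{j}(A-B)B^{n-1-j}$ is a sum of intersection numbers of a pseudo-effective class against $(n-1)$-fold products of nef classes, hence nonnegative. Applying this with $A = \sigma^*L$ and $B = N_k + \mu_k F_1$, and then expanding $B^n$ using $F_1^2 \equiv 0$, yields
$$
L^n = (\sigma^*L)^n \ge (N_k + \mu_k F_1)^n = N_k^n + n\mu_k (N_k|_{F_1})^{n-1}.
$$
Combining this with the telescoped bound for $N_k^n$ gives exactly the asserted inequality.

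I do not expect a serious obstacle; the statement is a formal consequence of the inequalities set up by Xiao's method. The one point requiring care is the role of the index $k$: it is essential that $\mu_k \ge 0$ so that $N_k + \mu_k F_1$ remains nef and the power-monotonicity step is valid, and it is equally important not to prolong the telescoping past $N_k$, since for $i > k$ the slopes $\mu_i$ are negative and the corresponding contributions would spoil the lower bound. Everything else is standard manipulation of nef intersection numbers on $Y$.
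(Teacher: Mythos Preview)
Your proposal is correct and follows essentially the same approach as the paper's proof: telescope the inequalities $N_{i+1}^n \ge N_i^n + n(\mu_i-\mu_{i+1})(N_i|_{F_1})^{n-1}$ starting from $N_1^n \ge 0$ (nefness of $N_1$), then use $\sigma^*L \ge N_k + \mu_k F_1$ together with $\mu_k \ge 0$ and $F_1^2 \equiv 0$ to cap off with $L^n \ge N_k^n + n\mu_k(N_k|_{F_1})^{n-1}$. The only difference is that you spell out the nef-power monotonicity $A^n \ge B^n$ explicitly, which the paper leaves implicit.
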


\begin{proof}
	Inductively using the above estimate, we have
	$$
	N_k^n \ge N_1^n + n\sum_{i=1}^{k-1}(\mu_i-\mu_{i+1})(N_i|_{F_1})^{n-1}\ge n\sum_{i=1}^{k-1}(\mu_i-\mu_{i+1})(N_i|_{F_1})^{n-1}.
	$$
	The last inequality holds since $N_1$ is nef. Notice that $\sigma^*L\ge N_k+\mu_k F_1$ and $\mu_k\ge 0$, we have 
	$$
	L^n \ge (N_k+\mu_k F_1)^n = N_k^n+n\mu_k (N_k|_{F_1})^{n-1}
	$$
	Thus the proof is completed by combining the above estimates together.
\end{proof}

\subsection{A basic slope inequality} We have the following result.

\begin{prop}\label{prop: slope}
	Let $f: X \to B$ and $F$ be as before. Suppose that $L$ is a nef $\QQ$-divisor on $X$ such that $L|_F$ is big and that $K_F - L|_F$ is pseudo-effective. Then we have
	$$
	\left(1 + \frac{2n!(n-1) \varepsilon(F, L|_F)}{(L|_F)^{n-1}}\right) L^n \ge 2n! \deg f_* \CO_X(\rounddown{L}).
	$$
\end{prop}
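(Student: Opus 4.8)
The plan is to run Xiao's method from Section~\ref{Xiao's method} on the divisor $L$ and to bound the Harder--Narasimhan ranks fiberwise via the Clifford type inequality of Theorem~\ref{thm: relative noether}. Throughout I keep the data $r_i,\mu_i,N_i,F_1$ and $\sigma\colon Y\to X$ produced there. Since $L$ is nef and $L|_F$ is big, I may assume $\mu_1\ge 0$: otherwise every slope is negative, so $\deg f_*\CO_X(\rounddown{L})=\deg\CE_m<0$ while the right-hand side is non-negative, and the inequality is trivial. Set $k=\max\{i\mid \mu_i\ge 0\}$. Because the graded pieces $\CE_i/\CE_{i-1}$ have positive rank and negative slope for $i>k$, one has $\deg f_*\CO_X(\rounddown{L})=\deg\CE_m\le\deg\CE_k$, so it suffices to bound $\deg\CE_k$.

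The heart of the argument is an upper bound for each rank $r_i=h^0(F_1,N_i|_{F_1})$. First I would verify that $K_{F_1}-N_i|_{F_1}$ is pseudo-effective: writing $\sigma|_{F_1}\colon F_1\to F$ for the induced birational morphism, one has $N_i|_{F_1}\le (\sigma|_{F_1})^*(L|_F)$ and $K_{F_1}=(\sigma|_{F_1})^*K_F+E$ with $E\ge 0$ exceptional, whence $K_{F_1}-N_i|_{F_1}\ge (\sigma|_{F_1})^*(K_F-L|_F)+E$ is pseudo-effective by hypothesis. Applying Theorem~\ref{thm: relative noether} on the $(n-1)$-dimensional fiber $F_1$ to the nef integral divisor $N_i|_{F_1}$ (so that $\vol(N_i|_{F_1})=(N_i|_{F_1})^{n-1}$), and controlling the error term by $\varepsilon(F_1,N_i|_{F_1})\le\varepsilon(F_1,(\sigma|_{F_1})^*(L|_F))\le\varepsilon(F,L|_F)$ through the two parts of Proposition~\ref{prop: epsilon}, I obtain
$$
r_i\le \frac{1}{2(n-1)!}(N_i|_{F_1})^{n-1}+(n-1)\varepsilon(F,L|_F).
$$

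Next I would substitute this into the degree formula $\deg\CE_k=\sum_{i=1}^{k-1}r_i(\mu_i-\mu_{i+1})+r_k\mu_k$, all of whose coefficients $\mu_i-\mu_{i+1}>0$ and $\mu_k\ge 0$ are non-negative. The contribution of the $\frac{1}{2(n-1)!}(N_i|_{F_1})^{n-1}$ terms reassembles into exactly the weighted sum appearing in Lemma~\ref{lem: Xiao}, which is at most $\tfrac1n L^n$; the $\varepsilon$-terms telescope to $(n-1)\varepsilon(F,L|_F)\bigl(\sum_{i=1}^{k-1}(\mu_i-\mu_{i+1})+\mu_k\bigr)=(n-1)\varepsilon(F,L|_F)\,\mu_1$. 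This yields
$$
\deg f_*\CO_X(\rounddown{L})\le \frac{1}{2n!}L^n+(n-1)\varepsilon(F,L|_F)\,\mu_1.
$$
It remains to bound $\mu_1$: since $\sigma^*L-\mu_1F_1$ is pseudo-effective and $\sigma^*L$ is nef, intersecting with $(\sigma^*L)^{n-1}$ gives $L^n\ge\mu_1(L|_F)^{n-1}$, i.e. $\mu_1\le L^n/(L|_F)^{n-1}$, where $(L|_F)^{n-1}>0$ because $L|_F$ is big and nef. Plugging this in and multiplying through by $2n!$ produces exactly the claimed inequality.

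The main obstacle I anticipate is organizational rather than conceptual: one must align the fiberwise Clifford bound on the $r_i$ with Xiao's lower bound on $L^n$ so that the two weighted sums over $i$ match and the leading constants combine to precisely $\frac{1}{2n!}$, while keeping every volume- and $\varepsilon$-term uniformly dominated by the single fiber quantity $\varepsilon(F,L|_F)$ via the monotonicity of $\varepsilon$ under both domination and birational pullback. The two spots demanding genuine care are the pseudo-effectivity check for $K_{F_1}-N_i|_{F_1}$ on the blow-up $Y$, and the clean telescoping to $\mu_1$ together with its pseudo-effectivity estimate.
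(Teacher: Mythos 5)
Your proposal is correct and follows essentially the same route as the paper: Xiao's method with the Harder--Narasimhan filtration, the fiberwise bound $r_i\le \frac{1}{2(n-1)!}(N_i|_{F_1})^{n-1}+(n-1)\varepsilon(F,L|_F)$ from Theorem \ref{thm: relative noether} and Proposition \ref{prop: epsilon}, the telescoping against \eqref{eq: deg} and Lemma \ref{lem: Xiao}, and the estimate $\mu_1\le L^n/(L|_F)^{n-1}$ from pseudo-effectivity of $\sigma^*L-\mu_1F_1$. The only blemish is a wording slip in the trivial case (when $\deg f_*\CO_X(\rounddown{L})\le 0$ it is the \emph{right}-hand side of the claimed inequality that is non-positive while the left-hand side is non-negative), which does not affect the argument.
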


\begin{proof}
	The inequality holds trivially when $\deg f_*\CO_X(\rounddown{L})\le 0$. Thus we may assume that $\deg f_*\CO_X(\rounddown{L})>0$.
	
	Let
	$$
	0= \CE_0 \subsetneq \CE_1 \subsetneq \cdots \subsetneq \CE_m = f_* \CO_X(\rounddown{L})
	$$
	be the Harder-Narasimhan filtration of $f_* \CO_X(\rounddown{L})$. Keep the same notation as in \S \ref{Xiao's method}. Since $\deg f_*\CO_X(\rounddown{L})>0$, we have $\mu_i>0$ for some $1\le i\le m$. Let $k:=\max\{i|\ 1 \le i \le m \ \text{and}\ \mu_i\ge 0 \}$. We have
	\begin{align}\label{eq: degree of Ek}
	    \deg \CE_k\ge \deg f_*\CO_X(\rounddown{L}).
	\end{align}
	
	By \eqref{eq: deg} and Lemma \ref{lem: Xiao}, we have the following two inequalities:
	\begin{align*}
	    L^n & \ge n \sum_{i=1}^{k-1} (\mu_i-\mu_{i+1})(N_i|_{F_1})^{n-1}+n\mu_k(N_k|_{F_1})^{n-1}, \\
		\deg \CE_k & = \sum_{i=1}^{k-1} r_i (\mu_i - \mu_{i+1})+r_k\mu_k.
	\end{align*}
	On the other hand, note that $N_i|_{F_1} \le \sigma^*L|_{F_1}$ for any $1 \le i \le m$ and that $K_{F_1} -\sigma^*L|_{F_1} \ge \sigma^*(K_F - L|_F)$ is pseudo-effective. By Theorem \ref{thm: relative noether} and Proposition \ref{prop: epsilon}, we have
	\begin{equation} \label{eq: key equation}
		r_i \le \frac{1}{2(n-1)!} (N_i|_{F_1})^{n-1} + (n-1) \varepsilon(F, L|_F).
	\end{equation}
	Combine the above three (in)equalities. We deduce that
	\begin{align*}
		L^n & \ge 2n! \deg \CE_k - 2n!(n-1) \varepsilon(F, L|_F) (\sum_{i=1}^{k-1} (\mu_i - \mu_{i+1})+\mu_{k}) \\
		& = 2n! \deg \CE_k - 2n!(n-1) \varepsilon(F, L|_F) \mu_1\\
		&\ge 2n!\deg f_*\CO_X(\rounddown{L})-2n!(n-1) \varepsilon(F, L|_F) \mu_1,
	\end{align*}
	where the last inequality follows by \eqref{eq: degree of Ek}.
	
	What is left to us is to estimate $\mu_1$. Note that $\sigma^*L - \mu_1 F_1$ is pseudo-effective. Thus
	$$
	L^n = (\sigma^*L)^n \ge \mu_1 (\sigma^*L|_{F_1})^{n-1} = \mu_1 (L|_F)^{n-1}.
	$$
	As a result, we deduce that
	$$
	\left(1 + \frac{2n!(n-1) \varepsilon(F, L|_F)}{(L|_F)^{n-1}}\right) L^n \ge 2n! \deg f_* \CO_X(\rounddown{L}).
	$$
	Thus the proof is completed.
\end{proof}

Before going further, we would like to remark that the inequality in Proposition \ref{prop: slope} is \emph{by no means} sharp. For example, when $n=2$, $f$ is a relatively minimal fibration by curves of genus $g \ge 2$, and $L = K_{X/B}$ (in this case $\varepsilon(F, L|_F) = 1$), Proposition \ref{prop: slope} yields
$$
K_{X/B}^2 \ge \left(\frac{4g-4}{g+1}\right) \deg f_* \omega_{X/B},
$$
which is weaker than the optimal slope inequality with the slope $\frac{4g-4}{g}$. This is because our estimate is not so delicate as Xiao's original version in \cite{Xiao} which also considers the intersection number contributed by the horizontal part $N_i|_{F_1}-N_{i+1}|_{F_1}$. See the proof of \cite[Lemma 2]{Xiao} for details. In other words, we have not employed Xiao's method in its full strength. However, Proposition \ref{prop: slope} is already enough to deduce Theorem \ref{main1}. Moreover, instead of using Theorem \ref{main1}, Proposition \ref{prop: slope} is sufficient for us to run the argument as in \cite[Proposition 4.4]{Barja_Stoppino} to deduce the absolute Severi inequality.

\subsection{Sharper slope inequalities}

In the following, we assume that
$$
a: F \to \Sigma
$$
is a generically finite map onto a projective variety $\Sigma$. Let $H$ be a sufficiently ample divisor on $\Sigma$. Let $M = a^*H$.

\begin{prop}\label{prop: sharper slope}
	Let $f: X \to B$ and $F$ be as before. Suppose that $L$ is a nef $\QQ$-divisor on $X$ such that $L|_F$ is big and that $K_F - L|_F$ is pseudo-effective.
	
	\begin{itemize}
		\item [(1)] If $a$ is birational, then
		$$
		\left(1 + \frac{5n!(n-1) \varepsilon(F, L|_F, M)}{2(L|_F)^{n-1}}\right) L^n \ge \frac{5n!}{2} \deg f_* \CO_X(\rounddown{L}).
		$$
		\item [(2)] If $a$ is not composed with an involution, then
		$$
		\left(1 + \frac{9n!(n-1) \varepsilon(F, L|_F, M)}{4(L|_F)^{n-1}}\right) L^n \ge \frac{9n!}{4} \deg f_* \CO_X(\rounddown{L}).
		$$
		\item [(3)] If $a$ is composed with an involution and $\Sigma$ has a smooth model of positive Kodaira dimension, then
		$$
		\left(1 + \frac{(5\delta(r') - 3) n!(n-1) \varepsilon(F, L|_F, M)}{(2 \delta(r') - 1)(L|_F)^{n-1}}\right) L^n \ge \frac{(5\delta(r') - 3) n!}{2 \delta(r') - 1} \deg f_* \CO_X(\rounddown{L}).
		$$
		Here $r'$ and $\delta$ are the same as in Theorem \ref{thm: deg 2}.
	\end{itemize}
\end{prop}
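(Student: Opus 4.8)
The plan is to rerun the Xiao-method computation from the proof of Proposition~\ref{prop: slope} essentially verbatim, changing only the single place where the Clifford-type bound enters. As there, I would assume $\deg f_*\CO_X(\rounddown{L})>0$, take the Harder--Narasimhan filtration $0=\CE_0\subsetneq\cdots\subsetneq\CE_m=f_*\CO_X(\rounddown{L})$, pass to the common resolution $\sigma\colon Y\to X$ with general fiber $F_1$, and build the nef divisors $N_1\le\cdots\le N_m\le N_{m+1}=\sigma^*L$ together with the index $k=\max\{i\mid\mu_i\ge0\}$. The structural facts $\deg\CE_k\ge\deg f_*\CO_X(\rounddown{L})$, the two (in)equalities coming from \eqref{eq: deg} and Lemma~\ref{lem: Xiao}, and the bound $\mu_1\le L^n/(L|_F)^{n-1}$ all carry over unchanged.

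The only new ingredient is the estimate on $r_i=h^0(F_1,N_i|_{F_1})$. Since $\sigma$ restricts to a birational morphism $F_1\to F$, the composite $F_1\to F\xrightarrow{a}\Sigma$ is again generically finite of the same degree, and each of the three geometric hypotheses (birational; not composed with an involution; composed with an involution over a base admitting a smooth model of positive Kodaira dimension) is invariant under this birational modification. Moreover $N_i|_{F_1}\le\sigma^*(L|_F)$ and $K_{F_1}-\sigma^*(L|_F)\ge\sigma^*(K_F-L|_F)$ is pseudo-effective, so $K_{F_1}-N_i|_{F_1}$ is pseudo-effective and I may apply, on $F_1$ (of dimension $n-1$, with the role of $M$ played by its pullback to $F_1$), Theorem~\ref{thm: deg 1} in case (1), Theorem~\ref{thm: deg 3} in case (2), and the ``moreover'' clause of Theorem~\ref{thm: deg 2} with $L_1=N_i|_{F_1}$ in case (3). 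Invoking the $L_1\le L$ form in the last case is exactly what keeps $r'$ computed through $L|_F$, so that $\delta(r')$ does not depend on $i$. Using $\vol(N_i|_{F_1})=(N_i|_{F_1})^{n-1}$ for the nef divisor $N_i|_{F_1}$, together with the monotonicity and birational invariance of $\varepsilon$ from Proposition~\ref{prop: epsilon}, each case yields
$$
r_i\le \frac{n}{s}\,(N_i|_{F_1})^{n-1}+(n-1)\,\varepsilon(F,L|_F,M),
$$
where $s$ denotes the relevant target slope $\frac{5n!}{2}$, $\frac{9n!}{4}$, or $\frac{(5\delta(r')-3)n!}{2\delta(r')-1}$; this has the shape of \eqref{eq: key equation} but with the smaller leading constant $\tfrac{n}{s}$ replacing $\tfrac{1}{2(n-1)!}$.

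From here the remainder mirrors the end of the proof of Proposition~\ref{prop: slope} line by line. Substituting the above into $\deg\CE_k=\sum_{i=1}^{k-1}r_i(\mu_i-\mu_{i+1})+r_k\mu_k$, the leading terms collapse by Lemma~\ref{lem: Xiao} into at most $\tfrac{1}{s}L^n$, while the $\varepsilon$-terms telescope to $(n-1)\,\varepsilon(F,L|_F,M)\,\mu_1$; multiplying by $s$, using $\deg\CE_k\ge\deg f_*\CO_X(\rounddown{L})$ and then $\mu_1\le L^n/(L|_F)^{n-1}$, gives exactly the three asserted inequalities. I expect no genuine difficulty in the analysis, since the sharp Clifford-type bounds are already established in \S 3; the only point demanding care is the verification in the middle paragraph that the geometric hypotheses descend to the general fiber $F_1$ and that, in case (3), the sub-divisor form of Theorem~\ref{thm: deg 2} is used so that the constant $\delta(r')$ remains fixed across the filtration.
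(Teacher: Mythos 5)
Your proposal is correct and follows exactly the route the paper takes: the paper's proof consists precisely of rerunning Proposition \ref{prop: slope} with the bound \eqref{eq: key equation} replaced by Theorem \ref{thm: deg 1}, Theorem \ref{thm: deg 3}, and the second (sub-divisor) inequality of Theorem \ref{thm: deg 2}, and your constants $\frac{n}{s}$ match the dimension-$(n-1)$ forms of those theorems in all three cases. Your explicit checks that the hypotheses descend to $F_1$ and that the $L_1\le L$ form of Theorem \ref{thm: deg 2} keeps $\delta(r')$ independent of $i$ are exactly the details the paper leaves to the reader.
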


\begin{proof}
	The proof is almost identical to Proposition \ref{prop: slope}. We only need to replace \eqref{eq: key equation} by the inequalities in Theorem \ref{thm: deg 1}, \ref{thm: deg 3}, and the second inequality in Theorem \ref{thm: deg 2}, respectively. Then the results will follow. We leave the details to the interested reader.
\end{proof}

\section{Proof of the main theorems}

In the final section, we prove the main theorems of this paper. We always assume that $f: X \to B$ is a relatively minimal fibration from a variety $X$ of dimension $n \ge 2$ to a smooth curve $B$ with a general fiber $F$ and that $f$ is of maximal Albanese dimension. Let
$$
a: X \to A
$$
be the Albanese map of $X$. Write $q = h^1(X, \CO_X) = \dim A$.

\subsection{Preparation when $g(B)>0$} \label{subsection: notation}
Before proving the results, we list some notation that will be used throughout the section. We first assume that $g(B)>0$. Note that in this case, $X$ itself is of maximal Albanese dimension. 

Let $\pi: Y \to X$ be a resolution of singularities of $X$. Thus $Y$ is also of maximal Albanese dimension. Let 
$$
f': = f \circ \pi: Y \to B
$$ 
be the induced fibration with a general fiber $F'$, and let 
$$
b: Y \to A
$$ 
be the Albanese map of $Y$. 

Let $m$ be a sufficiently large prime number. Similar to \S \ref{subsection: etale cover} but adding $Y$ into it, we have the following commutative diagram:
$$
\xymatrix{
	A \ar@/^1.5pc/[rrrr]_{\mu_m} \ar[dd] & Y_m \ar[l]^{b_m} \ar[d]_{\pi_m} \ar@/^1.5pc/[dd]^{f'_m} \ar[rr]_{\nu_m} & & Y \ar[r]_{b} \ar[d]^{\pi} \ar@/_1.5pc/[dd]_{f'} & A \ar[dd] \\
	& X_m \ar[ul]^{a_m} \ar[d]_{f_m} \ar@/_1.5pc/[rr]_{\nu_m}& & X \ar[d]^f \ar[ur]_a & \\
	J(B) \ar@/_1.5pc/[rrrr]_{\mu_m} & B_m \ar[l] \ar[rr]_{\sigma_m} & & B \ar[r] & J(B)
}
$$
Here $\mu_m$ still denotes the multiplication-by-$m$ map of $A$ or $J(B)$, the Jacobian variety of $B$, $X_m$ and $f_m$ are just identical to those in \S \ref{subsection: etale cover}, $Y_m = Y \times_{\mu_m} A$, and 
$$
f'_m: Y_m \to B_m
$$
is the Stein factorization of the morphism $Y_m \to Y \to B$. Clearly, $X_m$ has at worst terminal singularities, and $\pi_m: Y_m \to X_m$ is also a resolution of singularities of $X_m$. Denote by $F'_m$ a general fiber of $f'_m$. Moreover, we will fix a sufficiently ample divisor $H$ on $A$. By \cite[Proposition 2.3.5]{Birkenhake_Lange},
\begin{equation} \label{eq: abelian multiplication}
	m^2H \equiv \mu_m^*H
\end{equation}

\subsection{Proof of Theorem \ref{main1}} 

We divide the proof into two cases.

\subsubsection{Case I: $g(B) > 0$} \label{subsubsection: g>0} We first prove Theorem \ref{main1} when $g:=g(B) > 0$. 

If $F$ is not of general type, neither is $F'$. In this case, for a general torsion element $\CP \in \Pic^0(Y)$, $f_* (\omega_{Y/B} \otimes \CP)$ is of rank $\chi(F', \omega_{F'}) = 0$. We deduce that $f_* (\omega_{Y/B} \otimes \CP) = 0$. By Proposition \ref{prop: chi_f degree}, $\chi_f = \chi_{f'} = 0$. Thus \eqref{main inequality} holds trivially.

From now on, we will always assume that $F$ is of general type. Set 
$$
L:=\pi^*K_{X/B}, \quad L_m:= \nu_m^*L = \pi_m^* K_{X_m/B_m}.
$$
Clearly, $L_m$ is nef, and $L_m|_{F'_m}$ is big. Since $X$ has at worst terminal singularities, $K_Y - \pi^*K_X$ is effective. Thus $K_{F'_m} - L_m|_{F'_m}$ is pseudo-effective. Moreover, since
$$
{f'_m}_* \CO_{Y_m}(\rounddown{L_m}) = {f_m}_* \omega_{X_m/B_m} =  {f'_m}_* \omega_{Y_m/B_m},
$$
by \cite[Main Theorem]{Fujita}, we deduce that ${f'_m}_* \CO_{Y_m}(\rounddown{L_m})$ is semi-positive.

Since $\deg \nu_m = m^{2q}$, we have
\begin{equation} \label{eq: growth1}
	L_m^n = m^{2q}L^n.
\end{equation}
There is a natural restriction morphism
$$
\nu_m|_{F'_m}: F'_m \to F'.
$$
It is an \'etale morphism and $\deg \nu_m|_{F'_m} = m^{2q-2g}$. Therefore, we deduce that
\begin{equation} \label{eq: growth2}
	(L_m|_{F'_m})^{n-1} = m^{2q-2g} (L|_{F'})^{n-1} = m^{2q-2g} K_F^{n-1}.
\end{equation}

Moreover, we claim that 
\begin{equation} \label{eq: growth3}
	\varepsilon(F'_m, L_m|_{F'_m}) \sim O(m^{2q-2g-2}).
\end{equation}
In fact, we may assume that $b^*H-L$ is pseudo-effective. By \eqref{eq: abelian multiplication}, $m^2(b_m^* H)-L_m$ is also pseudo-effective. Thus
\begin{align*}
	\varepsilon(F'_m, L_m|_{F'_m}, (b^*_m H)|_{F'_m}) & \le (m^2+1)^{n-2}\left((b_m^*H)|_{F'_m}\right)^{n-1} \\
	& \le \frac{2^{n-2}}{m^2} \left((b_m^*(\mu_m^*H))|_{F'_m}\right)^{n-1} \\
	& = \frac{2^{n-2}}{m^2} \left((\nu_m^*(b^*H)))|_{F'_m}\right)^{n-1}  \\
	& = 2^{n-2} m^{2q-2g-2} \left((b^*H)|_{F'}\right)^{n-1}.
\end{align*}
Thus the claim is verified. 

Now applying Proposition \ref{prop: slope} to $f'_m$ and $L_m$, we deduce that
\begin{equation} \label{eq: slope1}
	\left(1 + \frac{2n!(n-1) \varepsilon(F'_m, L_m|_{F'_m})} {(L_m|_{F'_m})^{n-1}} \right) L_m^n \ge 2n! \deg {f'_m}_* \CO_{Y_m}(\rounddown{L_m}).
\end{equation}
Recall that 
$$
{f'_m}_* \CO_{Y_m}(\rounddown{L_m}) = {f'_m}_* \omega_{Y_m/B_m}.
$$
Together with \eqref{eq: growth1}, \eqref{eq: growth2} and \eqref{eq: growth3}, the above inequality \eqref{eq: slope1} implies that
\begin{equation} \label{eq: slope2}
	\left(1 + O(m^{-2}) \right) K_{X/B}^n \ge 2n! \left(\frac{\deg {f'_m}_* \omega_{Y_m/B_m}}{m^{2q}} \right).
\end{equation}

Let $m \to \infty$. The left hand side of \eqref{eq: slope2} clearly tends to $K_{X/B}^n$. By Proposition \ref{prop: degree limit}, the right hand side tends to $\chi_{f'} = \chi(Y, \omega_Y) - \chi(F', \omega_{F'}) \chi(B, \omega_B)$, which is nothing but $\chi_f$. Thus the proof for $g>0$ is completed.

\subsubsection{Case II: $g(B) = 0$} \label{subsubsection: g=0} Now we prove Theorem \ref{main1} when $g(B) = 0$. It is easy to see that the argument for $g(B) > 0$ does not apply here directly. However, we can reduce this case to the previous one via a base change.

Choose four general distinct closed points $P_1$, \ldots, $P_4$ on $B$. Let $\sigma: C \to B$ be a double cover branched along $P_1$, \ldots, $P_4$. By the Hurwitz formula, $g(C) = 1$. Let $Y=X \times_B C$ and 
$$
f': Y \to C
$$
be the induced fibration. Thus we have the following commutative diagram:
$$
\xymatrix{
	Y \ar[d]_{f'} \ar[r]_{\pi} & X \ar[d]^f \\
	C \ar[r]_{\sigma} & B
}
$$
Since $f$ is relatively of maximal Albanese dimension, so is $f'$. As $g(C) = 1$, $Y$ itself is of maximal Albanese dimension. Since $P_1$, \ldots, $P_4$ are general, we deduce that $Y$ is normal. Moreover, we claim that $Y$ has at worst terminal singularities. In fact, let $\mu: X' \to X$ be a resolution of singularities of $X$. Then $X' \times_B  C \to Y$ is just a resolution of singularities of $Y$, and the claim is just an easy consequence of the adjunction.

Since $K_{Y/C} = \pi^*K_{X/B}$, $f'$ is also relatively minimal, and we have
\begin{equation} \label{eq: base change1}
	K_{Y/C}^n = 2K_{X/B}^n.
\end{equation}
We also have
$$
\pi_* \omega_Y = \omega_X \oplus \left(\omega_X \otimes \CO_X(2F)\right).
$$
from the above double cover. Thus from the adjunction formula, we deduce that
\begin{align} \label{eq: base change2}
	\chi_{f'} & = \chi(Y, \omega_Y) - \chi(C, \omega_C) \chi(F, \omega_F)  \nonumber \\
	& = \chi(X, \omega_X) + \chi(X, \omega_X \otimes \CO_X(2F)) \\
	& = 2 \chi(X, \omega_X) + 2 \chi(F, \omega_F) \nonumber  \\
	& = 2 \chi_f. \nonumber 
\end{align}
Now that $g(C) = 1 > 0$. We have 
$$
K_{Y/C}^n \ge 2n! \chi_{f'}
$$
as in \S \ref{subsubsection: g>0}. Together with \eqref{eq: base change1} and \eqref{eq: base change2}, it implies that
$$
K_{X/B}^n \ge 2n! \chi_f.
$$
Thus the whole proof of Theorem \ref{main1} is now completed.

\begin{remark} \label{rem: g>0}
	With this framework, it is easy to see that in order to get inequalities of the same type as \eqref{main inequality} with various slopes, we only need to (up to a base change to the $g(B) > 0$ case) replace \eqref{eq: slope1} by a corresponding explicit estimate with the same slope, and the same argument will give rise to the desired results. This is a crucial observation to us.
\end{remark}

\subsection{Sharper inequalities} As an example of the above remark, we can easily obtain the following result.

\begin{theorem} [Theorem \ref{main3}] \label{thm: sharper main} 
	Let $f: X \to B$ be a relatively minimal fibration from a variety $X$ of dimension $n \ge 3$ to a smooth curve $B$.  Denote by $F$ a general fiber of $f$. Suppose that $f$ is of maximal Albanese dimension and $a: X \to \Alb(X)$ is the Albanese map of $X$.
	\begin{itemize}
		\item [(1)] If $a|_F$ is birational, then 
		$$
		K_{X/B}^n \ge \frac{5n!}{2} \chi_f.
		$$
		
		\item [(2)] If $a|_F$ is not composed with an involution, then
		$$
		K_{X/B}^n \ge \frac{9n!}{4} \chi_f.
		$$
	\end{itemize}
\end{theorem}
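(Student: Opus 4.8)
The plan is to follow the blueprint of Remark \ref{rem: g>0}: the two target inequalities have exactly the same shape as \eqref{main inequality}, only with a larger slope, so I expect to reuse the machinery of \S\ref{subsubsection: g>0} and \S\ref{subsubsection: g=0} almost verbatim, substituting the sharper slope estimates of Proposition \ref{prop: sharper slope} for the basic estimate of Proposition \ref{prop: slope}. Concretely, the proof splits into the same two cases as that of Theorem \ref{main1}, and the only structural change is which slope inequality is fed into the limiting argument.

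First I would reduce to the case $g(B)>0$. Taking the double cover $\sigma\colon C\to B$ branched at four general points as in \S\ref{subsubsection: g=0}, set $Y=X\times_B C$ and $f'\colon Y\to C$; then $g(C)=1$, and \eqref{eq: base change1}, \eqref{eq: base change2} give $K_{Y/C}^n=2K_{X/B}^n$ and $\chi_{f'}=2\chi_f$, so both inequalities are invariant under this base change. A general fiber $F'$ of $f'$ is isomorphic to a general fiber $F$ of $f$, and $a|_F$ factors through the restriction to $F'$ of the Albanese map of $Y$; hence if $a|_F$ is birational, resp. not composed with an involution, so is that restriction. Thus it suffices to treat the case $g(B)>0$.

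Assuming $g(B)>0$, I would then run the argument of \S\ref{subsubsection: g>0} line by line. Put $L=\pi^*K_{X/B}$ and $L_m=\nu_m^*L$, form the \'etale tower $X_m,Y_m$ over $B_m$ via multiplication by a large prime $m$, and fix the ample divisor $H$ on $A$ together with $M_m=(b_m^*H)|_{F'_m}$. The single modification is that, in place of \eqref{eq: slope1}, I would apply part (1) (resp. part (2)) of Proposition \ref{prop: sharper slope} to $f'_m$ and $L_m$ with this choice of $M$, obtaining the same inequality but with slope $\tfrac{5n!}{2}$ (resp. $\tfrac{9n!}{4}$) and with $\varepsilon(F'_m, L_m|_{F'_m})$ replaced by $\varepsilon(F'_m, L_m|_{F'_m}, M_m)$. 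The growth estimates \eqref{eq: growth1} and \eqref{eq: growth2} are unaffected, and the computation of \eqref{eq: growth3} already bounds $\varepsilon(F'_m, L_m|_{F'_m},(b_m^*H)|_{F'_m})$ by $O(m^{2q-2g-2})$, so after dividing by $m^{2q}$ the error term is $O(m^{-2})$. Letting $m\to\infty$ and invoking Proposition \ref{prop: degree limit} on the right-hand side then yields $K_{X/B}^n\ge\tfrac{5n!}{2}\chi_f$ (resp. $\tfrac{9n!}{4}\chi_f$).

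The step needing genuine care — and the one I expect to be the main obstacle — is verifying that the hypotheses of Proposition \ref{prop: sharper slope} really hold for $f'_m$, i.e. that the relevant map $b_m|_{F'_m}\colon F'_m\to A$ inherits birationality, resp. non-factorization through an involution, from $a|_F$. Writing $A_F=\ker(h)$, the fiber $F'_m$ is the pullback of $F'\to A_F$ along the multiplication map $\mu_m\colon A_F\to A_F$, which is \'etale; hence the generic degree of $b_m|_{F'_m}$ onto its image equals that of $a|_{F'}$, giving birationality in case (1). For case (2) I would take $m$ to be an odd prime, so that the Galois group of $F'_m\to F'$, a power of $\ZZ/m$, has no subgroup of index two; this excludes new involutions arising from the deck transformations, while any involution through which $b_m|_{F'_m}$ factored would descend to one for $a|_{F'}$, contradicting the hypothesis. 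Making these inheritance statements precise — together with their compatibility with the double cover of the first step — is the only nontrivial point; everything else is a mechanical repetition of the proof of Theorem \ref{main1}.
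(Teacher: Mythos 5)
Your overall route is the paper's: reduce to $g(B)>0$ by the double-cover base change, then rerun the proof of Theorem \ref{main1} with Proposition \ref{prop: sharper slope} (1) or (2) in place of \eqref{eq: slope1} and let $m\to\infty$. The reduction step, the growth estimates, and case (1) (birationality of $b_m|_{F'_m}$ follows since $F'_m\to F'$ is \'etale and the degree of $b_m|_{F'_m}$ onto its image divides that of $b|_{F'}$) are all fine and match the paper.

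The gap is exactly where you flagged it, in case (2), and your proposed resolution does not close it. Knowing that the deck group of $F'_m\to F'$ (a power of $\ZZ/m$) has no index-two subgroup only rules out involutions that \emph{are} deck transformations; the involution you must exclude is a degree-two intermediate extension $K(F'_m)\supset K'\supseteq K(\Sigma_m)$, which a priori has nothing to do with the deck group, and your assertion that such an involution ``would descend to one for $a|_{F'}$'' is precisely the statement that needs proof. The paper isolates this as Lemma \ref{lem: involution} and proves it by passing to the Galois closure $K(U)$ of $K(F')/K(\Sigma)$, with group $H\le S_d$ where $d=\deg a|_F$: one shows $K(U_p)/K(\Sigma)$ is Galois of group $G_p$ with $G=\Gal(K(U_p)/K(U))$ normal, then pushes the index-two pair $H_1\subset H_1'$ down to $H_1G\subset H_1'G$ and takes fixed fields to produce an involution for $a|_{F'}$. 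This argument uses the coprimality $(|G|,|H|)=1$, which is guaranteed by $m>d$ (since $|H|$ divides $d!$ and $|G|$ is a power of $m$), not merely by $m$ being odd; with only ``$m$ odd'' the key identifications $K(U)\cap K(\Sigma_m)=K(\Sigma)$ and $K(F'_m)\cap K(U)=K(F')$ can fail. So you need both the stronger hypothesis $m>\deg a|_F$ and the Galois-theoretic descent argument to make case (2) rigorous.
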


\begin{proof}
	Remark \ref{rem: g>0} allows us to assume that $g > 0$. In the following, we just adopt the notation in \S \ref{subsection: notation}.
	
	To prove (1), note that now $B \to J(B)$ is an embedding. It implies that $a$ separates any two distinct fibers of $f$. In particular, $a$ is birational. Thus for every sufficiently large prime number $m > 0$, $b_m$ is birational. So is $b_m|_{F'_m}$. Then we simply replace the estimate \eqref{eq: slope1} in the proof of Theorem \ref{main1} by the inequality in Proposition \ref{prop: sharper slope} (1) for $f'_m$ and $L_m$, and the conclusion will follow by letting $m \to \infty$.
	
	The proof of (2) is similar. In this case, we know that $b|_{F'}$ is not composed with an involution. Let $d = \deg a|_F = \deg b|_{F'}$. By the following Lemma \ref{lem: involution}, $b_m|_{F'_m}$ is not composed with an involution as long as $m > d$. Thus the conclusion will follow similarly by letting $m \to \infty$.
\end{proof}

\begin{lemma} \label{lem: involution}
	Let $\alpha: V \to W$ be a generically finite morphism between two varieties of degree $d > 0$ such that $\alpha$ is not composed with an involution. Let $p > d$ be any prime number. Let $W_p \to W$ be a Galois cover with $G = \mathrm{Gal}(W_p/W)$ a $p$-group. Let $V_p: = V \times_W W_p$ and let $\alpha_p: V_p \to W_p$ be the induced morphism. Then $\alpha_p$ is not composed with an involution.
\end{lemma}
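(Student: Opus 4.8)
The plan is to translate the whole statement into the Galois theory of function fields and reduce it to a clean group-theoretic observation, where the hypothesis $p>d$ is precisely what makes a direct-product decomposition of the relevant Galois group available.

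First I would set $K = k(W)$, $L = k(V)$ and $K_p = k(W_p)$, so that $L/K$ has degree $d$ and $K_p/K$ is Galois with group $G$ of order a power of $p$. The generic fibre of $\alpha_p\colon V_p\to W_p$ has coordinate ring $L \otimes_K K_p$, and the first step is to check that $V_p$ is irreducible. The intersection $L \cap K_p$ is a subfield of $K_p$, so $[L\cap K_p : K]$ is a power of $p$; since it also divides $[L:K]=d<p$, it must equal $1$, i.e. $L\cap K_p = K$. Because $K_p/K$ is Galois, this forces $L$ and $K_p$ to be linearly disjoint over $K$, so $L\otimes_K K_p$ is a field $M := L\cdot K_p$ with $[M:K_p]=d$. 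Under this dictionary, saying that $\alpha_p$ is composed with an involution means exactly that there is an intermediate field $K_p \subseteq E' \subseteq M$ with $[M:E']=2$; I will assume such an $E'$ exists and derive that $\alpha$ itself is composed with an involution, contrary to hypothesis.

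Next I would fix an algebraic closure, let $\tilde{L}$ be the Galois closure of $L/K$, and set $\tilde{G} = \Gal(\tilde L/K)$ and $H = \Gal(\tilde L/L)$. Since $\tilde G$ acts faithfully on the $d$ embeddings of $L$ into $\tilde L$, it embeds into the symmetric group on $d$ letters, so $|\tilde G|$ divides $d!$; as $p>d$ is prime, $p\nmid |\tilde G|$. Consequently $\tilde L \cap K_p$, which is Galois over $K$ with order dividing both $|\tilde G|$ and $|G|=p^s$, equals $K$, so $\tilde L$ and $K_p$ are linearly disjoint and $\tilde M := \tilde L\cdot K_p$ is Galois over $K$ with $\Gal(\tilde M/K) \cong \tilde G \times G$. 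Establishing this product decomposition is the crux of the argument, and the step I expect to be the main obstacle to set up carefully — everything afterward is bookkeeping with the Galois correspondence. From it one reads off $\Gal(\tilde M/K_p) = \tilde G \times 1$, $\Gal(\tilde M/M) = H\times 1$, and (although it is not strictly needed) $\Gal(\tilde M/L) = H\times G$.

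Finally I would apply the Galois correspondence for $\tilde M/K$. The hypothetical intermediate field $E'$ corresponds to a subgroup $J$ with $H\times 1 \subseteq J \subseteq \tilde G\times 1$ and $[J : H\times 1]=2$. Since $J$ is contained in the first factor, it has the form $J = J_0\times 1$ for a subgroup $J_0$ with $H\subseteq J_0\subseteq \tilde G$ and $[J_0:H]=2$. The fixed field of $J_0$ is then an intermediate field $K\subseteq E_0\subseteq L$ with $[L:E_0]=2$, that is, a degree-two map through which $\alpha$ factors birationally. This contradicts the assumption that $\alpha$ is not composed with an involution, and the lemma follows.
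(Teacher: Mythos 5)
Your proof is correct and follows essentially the same route as the paper: pass to the Galois closure of $k(V)/k(W)$, use $p>d$ (so $p\nmid d!$) to get linear disjointness from $k(W_p)$, and then transport the hypothetical index-two intermediate field down via the Galois correspondence. The only difference is presentational — you make the direct-product decomposition $\Gal(\tilde M/K)\cong \tilde G\times G$ explicit, while the paper extracts only the normality of $G$ and argues with the coset subgroups $H_1G\subset H_1'G$; both yield the same contradiction.
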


\begin{proof}
	By our assumption, $K(V) \simeq \frac{K(W)(t)}{(f(t))}$, where $f(t)$ is an irreducible polynomial of degree $d$ with coefficients in $K(W)$. Using Galois theory, we can find a variety $U$ and a generically finite map $\beta: U \to V$ such that $K(U)$ is the splitting field of $f(t)$. Thus $K(U)/K(W)$ is a Galois extension. Write
	$$
	H = \Gal(K(U)/K(W)).
	$$
	Then $H$ is a subgroup of $S_d$. In particular, $|H|$ divides $d!$. Since $p > d$ and $G$ is a $p$-group, we have $(|G|, |H|) = 1$.
	
	Let $U_p = U \times_V V_p$. We claim that $U_p$ is irreducible. Otherwise, let $U'_p$ be an irreducible component of $U_p$. Now the morphism $U'_p \to W$ has two factorizations $U'_p \to U \to W$ and $U'_p \to W_p \to W$. Thus both $|H|$ and $|G|$ divide $[K(U'_p) : K(W)]$. Since $(|G|, |H|) = 1$,  we have
	$$
	[K(U'_p) : K(W)] \ge |G| |H|.
	$$ 
	On the other hand, since the degree of the map $U'_p \to V_p$ is strictly less than $\deg \beta$, we have
	$$
	[K(U'_p) : K(W)] = [K(U'_p):K(W_p)][K(U):K(W)] < |G| |H|.
    $$
    This is a contradiction. As a result, $U_p$ is irreducible.
    In particular, the natural morphism $U_p \to U$ is also a Galois cover and 
	$$
	G = \Gal(K(U_p)/K(U)).
	$$

	We claim that the extension $K(U_p)/K(W)$ is also Galois. Write 
	$$
	G_p = \Aut(K(U_p)/K(W)).
	$$ 
	It is clear that 
	$$
	|G_p| \le [K(U_p):K(W)] = [K(U_p):K(U)][K(U):K(W)] = |G||H|.
	$$
	On the other hand, since $H = \Gal(K(U_p)/K(W_p))$, we may view both $G$ and $H$ as subgroups of $G_p$. Since $(|G|, |H|) = 1$, we deduce that 
	$$
	|G_p| \ge |G| |H|.
	$$
	Therefore, $|G_p| = |G| |H|$ and the claim is verified. As a consequence of this claim, $G$ is a normal subgroup in $G_p$.
	
	Now suppose that $\alpha_p$ is composed with an involution. This means that there exists a variety $V'_p$ such that $K(V_p) \supset K(V'_p) \supseteq K(W_p)$ and
	$$
	[K(V_p):K(V'_p)] = 2.
	$$
	Write $H_1=\Aut(K(U_p)/K(V_p))$ and $H'_1=\Aut(K(U_p)/K(V'_p))$. Then the fundamental theorem of Galois theory tells us that $H_1 \subset H'_1$ are both subgroups of $G_p$ and 
	$$
	[H'_1:H_1] = 2.
	$$
	Since $G$ is normal, we consider another two subgroups $H_1G \subset H'_1G$ of $G_p$. Then we still have
	$$
	[H'_1G :H_1G] = 2.
	$$
	Note that $K(U_p)^{H_1G} = K(V)$, by fundamental theorem of Galois theory again, $K(U_p)^{H'_1G}$ is a subfield of $K(V)$ and
	$$
	[K(V):K(U_p)^{H'_1G}] = 2.
	$$ 
	This implies that $\alpha$ is composed with an involution. However, this is absurd. Thus the proof is completed.
\end{proof}

\begin{remark}
	After we finished the first version of the paper, Barja informed us the result \cite[Lemma 2.9]{Barja_Pardini_Stoppino} which states that if one further assumes that $V$ is of general type, then $\Gal(\alpha_p) = \Gal(\alpha)$ for any prime number $p$ larger than a certain non-explicit constant depending on the volume and the dimension of $V$.
\end{remark}

\subsection{An example} We provide an example showing that \eqref{main inequality} is sharp.

Let $Y:=B \times A$ be a product of a smooth curve $B$ of genus $g$ and an abelian variety $A$ of dimension $n-1$, with two natural projections $p_1: Y \to B$ and $p_2: Y \to A$. Take two sufficiently ample divisors $L_1$ on $B$ and $L_2$ on $A$, respectively. Denote $L = p_1^*L_1 + p_2^*L_2$. Choose a smooth divisor $D \in |2L|$ on $Y$. Let $\pi: X \to Y$ be a double cover branched along $D$. It is easy to see that
$$
f: X \to B
$$
is a relatively minimal fibration whose general fiber $F$ is a double cover of $A$ branched along $L_2$, thus is of general type. Moreover, $f$ is relatively minimal of maximal Albanese dimension.

Since $K_{X/B} \sim \pi^*L$, we have 
$$
K_{X/B}^n = (\pi^*L)^n = 2L^n = 2n (\deg L_1)L_2^{n-1}
$$
On the other hand, since 
$$
\pi_* \omega_X = \CO_Y(p_1^*K_B) \oplus \CO_Y(L + p_1^*K_B)
$$
and 
$$
\pi_* \omega_F = \CO_A \oplus \CO_A(L_2),
$$
by the K\"unneth formula, we have
\begin{align*}
\chi(X, \omega_X) & = \chi(Y, \CO_Y(p_1^*K_B)) + \chi(Y, \CO_Y(L + p_1^*K_B)) \\ 
& = \chi(B, \omega_B) \chi(A, \CO_A) +  \chi(B, \CO_B(L_1+K_B)) \chi(A, \CO_A(L_2)) \\
& =  \chi(B, \CO_B(L_1+K_B)) \chi(A, \CO_A(L_2)),
\end{align*}
and 
$$
\chi(F, \omega_F) = \chi(A, \CO_A) + \chi(A, \CO_A(L_2)) = \chi(A, \CO_A(L_2)).
$$
It follows that
$$
\chi_f = \chi(A, \CO_A(L_2)) \left( \chi(B, \CO_B(L_1+K_B)) - \chi(B, \omega_B) \right) = \frac{L_2^{n-1}}{(n-1)!} \deg L_1.
$$
Thus for this fibration $f$, we have $K_{X/B}^n = 2n! \chi_f > 0$.

\subsection{Proof of Theorem \ref{main2}} \label{subsection: proof of main2}

Since the result is either known or trivial when $n = 2$, in the following, we assume that $n \ge 3$ and
$$
K_{X/B}^n = 2n! \chi_f > 0.
$$
We first prove Theorem \ref{main2} (1). Via a base change argument as in \S \ref{subsubsection: g=0}, we may assume that $g(B) > 0$. Thus we are under the setting of \S \ref{subsection: notation}. Moreover, by Theorem \ref{main3}, we know that $a|_F$ is composed with an involution.

Resume all notation in \S \ref{subsection: notation}. Write $\Sigma = a(F)$. Then $\Sigma$ is a subvariety of an abelian variety $A_F$, a general fiber of $A \to J(B)$ of dimension $q - g(B)$, and $\Sigma$ generates $A_F$. To show that $\Sigma = A_F$, we only need to show that the smooth model of $\Sigma$ has Kodaira dimension zero.

Let $\sigma: \Sigma' \to \Sigma$ be a resolution of singularities of $\Sigma$. Let $\Sigma_m = a_m(F_m)$ and $\Sigma'_m = \Sigma_m \times_{\sigma} \Sigma'$. Then $\sigma_m: \Sigma'_m \to \Sigma_m$ is also a resolution of singularities of $\Sigma_m$. Let $\nu_m: \Sigma'_m \to \Sigma'$ be the induced \'etale map. Thus we have the following diagram:
$$
\xymatrix{
	& \Sigma'_m \ar[r]_{\nu_m} \ar[d]^{\sigma_m} & \Sigma' \ar[d]_{\sigma} &  \\
	F'_m \ar[r] \ar[dr]^{b_m} & \Sigma_m \ar@{^{(}->}[d] \ar[r] & \Sigma \ar@{^{(}->}[d] & F' \ar[dl]_{b} \ar[l] \\
	& A_F \ar[r]^{\mu_m} & A_F &
}
$$
Denote 
$$
r' = r'(L|_{F'}, (b^*H)|_{F'}, \Sigma'), \quad r'_m = r'(L_m|_{F'_m}, (b^*_m H)|_{F'_m}, \Sigma'_m).
$$ 
With this notation, by \eqref{eq: abelian multiplication}, we have
$$
r':= \frac{(L|_{F'}) \left((b^*H)|_{F'} \right)^{n-2}}{K_{\Sigma'} \left( \sigma^*(H|_\Sigma) \right)^{n-2}} = \frac{(L_m|_{F'_m}) \left((b_m^*H)|_{F'_m} \right)^{n-2}}{K_{\Sigma'_m} \left(\sigma_m^*(H|_{\Sigma_m})\right)^{n-2}} =: r'_m.
$$
It simply implies that
$$
\delta(r') = \delta(r'_m).
$$
Now we use the framework of the proof of Theorem \ref{main1} again and replace \eqref{eq: slope1} by the one in Proposition \ref{prop: sharper slope} (3). Together with the above equality, we deduce that 
$$
K_{X/B}^n \ge \frac{(5\delta(r') - 3) n!}{2 \delta(r') - 1} \chi_f.
$$
However, if $\kappa(\Sigma') > 0$, we would have $\delta(r') > 1$ and thus $\frac{5\delta(r') - 3}{2 \delta(r') - 1} > 2$. This is a contradiction. As a result, $\kappa(\Sigma') = 0$ and $\Sigma = A_F$.

Now we prove Theorem \ref{main2} (2). Note that $K_{X/B}^n > 0$ implies that $K_{X/B}$ is also big. In particular, a general fiber $F$ of $f$ is a minimal variety of general type. By \cite[Theorem 1-2-5]{Kawamata_Matsuda_Matsuki}, we have 
$$
R^if_*\omega_{X/B}^{[l]} = R^if_*\omega_X^{[l]} \otimes \omega_B^{\otimes (-l)} =0
$$ 
for any $i>0$ and $l \ge 2$. Thus for any $l \ge 2$, we have 
$$
\chi(B, f_*\omega_{X/B}^{[l]})=\chi(X, \omega_{X/B}^{[l]}).
$$
Let $P_l(F)$ denote the $l^{\mathrm{th}}$ plurigenus of $F$. Then we have
\begin{align*}
    \deg f_*\omega_{X/B}^{[l]} & = \chi(B, f_*\omega_{X/B}^{[l]}) - P_l(F) \chi(B, \CO_B) \\
    & = \chi(X, \omega_{X/B}^{[l]}) - P_l(F) \chi(B, \CO_B) \\
    & = \frac{l^n K_{X/B}^n}{n!} + o(l^n).
\end{align*}
In particular, for $l \gg 0$, $\det f_*\omega_{X/B}^{[l]}$ is an ample line bundle on $B$. By \cite[Proposition 4.6]{Viehweg},  we know that for $l \gg 0$, the vector bundle $f_*\omega_{X/B}^{[l]}$ is ample. Thus by \cite[Theorem 1.4]{Ohno}, $mK_{X/B} - F$ is nef for a sufficiently large $m \in \ZZ$. Replacing $B$ by one of its cyclic cover of degree $m$ which is either \'etale (if $g(B) > 0$) or ramified at general points (if $g(B) = 0$) and replacing $f: X \to B$ by the fibration induced by this base change accordingly,  we may assume that $K_{X/B} - F$ is nef. Similar to \S \ref{subsubsection: g=0}, we know that this induced fibration is also relatively minimal and of maximal Albanese dimension. Moreover, we still have 
$$
K_{X/B}^n = 2n! \chi_f > 0
$$ 
for this new fibration $f$.

Using the same strategy as in the proof of Theorem \ref{main1} but replacing $K_{X/B}$ by $K_{X/B}-F$, we deduce that
$$
(K_{X/B}-F)^n\ge 2n!\deg f_*(\omega_{X/B} \otimes \CP \otimes \CO_X (-F)),
$$
where $\CP\in \Pic^0(X)$ is a general torsion element. That is,
$$
K_{X/B}^n - nK_F^{n-1} \ge 2n!(\chi_f - \chi(F, \omega_F)).
$$
By the assumption that $K_{X/B}^n = 2n! \chi_f$, we have
$$
K_F^{n-1} \le 2(n-1)! \chi(F, \omega_F).
$$
Since $F$ is minimal of maximal Albanese dimension, together with the absolute Severi inequality for $F$, we deduce that 
$$
K_F^{n-1}=2(n-1)!\chi(F, \omega_F).
$$
Thus the proof is completed.

\bibliography{relative_severi}
\bibliographystyle{amsplain}
	
\end{document}